\newcolumntype{L}{>{\displaystyle}l}
\newcolumntype{C}{>{\displaystyle}c}
\newcolumntype{R}{>{\displaystyle}r}
 \newtheorem{theorem}{Theorem}[section]
 \newtheorem{lemma}[theorem]{Lemma}
 \newtheorem{corollary}[theorem]{Corollary}
\theoremstyle{definition}
 \newtheorem{definition}[theorem]{Definition}
 \newtheorem{example}[theorem]{Example}
\numberwithin{equation}{section}
\newcommand{\R}{\boldsymbol{R}}
\newcommand{\rank}{\operatorname{rank}}
\renewcommand{\phi}{\varphi}
\newcommand{\A}{\mathcal{A}}
\newcommand{\mycomment}[1]{}
\renewcommand{\tilde}{\widetilde}
\begin{document}

\title{Geometry on the Gluing Locus of Two Surfaces}
\author{Li Junzhen}
\date{\today}
\maketitle

\begin{quote}
In this paper, we deal with the gluing of two surfaces, where the gluing locus is assumed to be a curve. We consider a moving frame along the gluing locus,
and define developable surfaces with respect to the frame.
Considering geometric properties of these developable surfaces, we study the geometry of gluing two surfaces.
\end{quote}

\section{Introduction}
In recent decades, with the development of computer graphics, discrete surface theory has been studied by many authors. In this theory, the gluing of two surfaces is important. In this paper, we pay attention to where two surfaces are glued together. That set is called the gluing locus and we assume it is a curve. We define a frame along the gluing locus by using each normal vector on each surface that is glued. Once a frame along a curve is given, then developable surfaces are defined naturally. And these surfaces represent the geometric properties of the frame. It is known that developable surfaces in the three-dimensional Euclidean space are classified into cylinders, cones and tangent developable surfaces. Among these, cylinders and cones are special ones. Each condition for the developable surfaces defined by the frame to be a cylinder or a cone should be considered as a special gluing. Furthermore, singularities of the developable surfaces defined by the frame should represent geometric properties of gluing. We study geometry on the gluing locus of two glued surfaces
by considering the above cases.

\section{Preliminaries}
We prepare the necessary notations and
organize the geometry of ruled surfaces, 
frontal surfaces and their singularities.

\subsection{Ruled surfaces, developable surfaces, and their singularities}
In this section, we deal with ruled surfaces, 
developable surfaces and their singularities.
For more details, see \cite{gray,izdev,iztakeruled}.
Let $I \subset \R$ be an open interval, 
$c:I \to \R^3$ be a curve
and $\delta:I \to \R^3$ be a curve such that $|\delta|=1$.
The surface defined by
$$
r(t,a)=c(t)+a\delta(t)
$$
is called a {\it ruled surface}.
\begin{definition}
	A ruled surface $r(t,a)=c(t)+a\delta(t)$ is a {\it cylinder}
	if $\delta'=0$ holds for any $t \in I$.
	Then it is said to be {\it non-cylindrical} if
	$$
	\delta'\ne0
	$$
	holds for any $t \in I$, where $'=\partial/\partial t$.
\end{definition}
For a ruled surface, there is a curve
which is called a {\it striction curve}.
\begin{definition}
	For a non-cylindrical ruled surface $r(t,a)=c(t)+a\delta(t)$,
	the curve $\hat{\sigma}(t)=r(t,s(t))$
	is called a {\it striction curve} of $r$ if
	$$
	\hat\sigma'\cdot \delta' = 0,
	$$
	holds for any $t \in I$.
\end{definition}
If $r$ is a cylinder, since $\delta' = 0$ holds, 
for any function $s(t)$, the curve $r(t, s(t))$ is a striction curve.
Since $\delta' \cdot \delta' \ne 0$ holds
for a non-cylindrical ruled surface $r$,
setting
$$
s(t) = -\frac{c' \cdot \delta'}{\delta' \cdot \delta'}.
$$
The curve $\hat\sigma(t)=r(t,s(t))$ is a {\it striction curve} of $r$.
Moreover, it is known that for a non-cylindrical ruled surface $r$,
the set of singular points is included by the image of
the striction curve $\hat{\sigma}(t)$.
\begin{definition}
	A ruled surface $ r $ is called a {\it cone} 
	if it is non-cylindrical.
	Moreover, the image of the striction curve $\hat{\sigma}$ is a single point.
\end{definition}
A ruled surface with zero Gaussian curvature is called
a {\it developable surface}.
It is known that developable surfaces are classified as cylinder, cone, tangent developable, or combinations of them.

\subsection{Frame along curve on frontal}\label{sec:frame}
In this section, we introduce the notion of a front
which is a singular surface with well-defined
unit normal vector.
Let $U\subset\R^2$ be an open set.
\begin{definition}
	A map $f: U \to \R^3$ is called a {\it frontal} 
	if there exists a map $\nu: U \to \R^3$ 
	such that $|\nu| = 1$ 
	and for any point $p \in U$ 
	and any vector $X \in T_p \R^2$, 
	the condition
	$$
	df_p(X) \cdot \nu(p) = 0
	$$
	holds. 
	The map $\nu$ 
	is called the {\it unit normal vector} of $f$.
	A frontal $f$ is called a {\it front} 
	if $(f, \nu)$ is an immersion.
\end{definition}
If $f$ is a regular surface, then $\nu$ can be taken as the usual unit normal vector. Therefore, a regular surface is a frontal. 
Moreover, one can easily see that $(f,\nu)$ is an immersion.
Thus, a regular surface is a front.
On the other hand, a frontal can have singularities.
Let $f: U \rightarrow \R^3$ be a frontal.
Let $I \subset \R$ be an open interval,
$\gamma: I \to U$ be a curve.
We set
$$
\tilde{\gamma}(t) = f \circ \gamma(t).
$$
Then this is a curve on $f$.
We do not assume $t$ is an arc-length parameter, 
then $\gamma$ may have singular points.
We assume that there exits a function $l(t)$ and 
a unit vector $\boldsymbol{e}$ satisfying 
$\tilde{\gamma}' = l\boldsymbol{e}$.
We take $\boldsymbol{\nu}$ which is a unit vector field
along $\tilde{\gamma}$ normal to $\boldsymbol{e}$.
We set $\boldsymbol{b} = \boldsymbol{e}\times\boldsymbol{\nu}$.
Then we have the frame
$\{\boldsymbol{e}, \boldsymbol{\nu}, \boldsymbol{b}\}$ 
along $\tilde{\gamma}$ on the frontal $f$. 
Here, since $\boldsymbol{\nu}$ 
is not necessarily the restriction of the unit normal vector of $f$,
the frame taken here is not necessarily the Darboux frame.
In this case, the functions $\kappa_1(t)$, $\kappa_2(t)$ and $\kappa_3(t)$ are
defined by the following Frenet-Serret type formulas:
\begin{equation}\label{eq:frenet}
	\begin{bmatrix}
		\boldsymbol{e} \\
		\boldsymbol{\nu} \\
		\boldsymbol{b}\\
	\end{bmatrix}'
	= 
	\begin{bmatrix}
		0 & \kappa_1 & \kappa_2 \\
		-\kappa_1 & 0 & \kappa_3 \\
		-\kappa_2 & -\kappa_3 & 0 \\
	\end{bmatrix}
	\begin{bmatrix}
		\boldsymbol{e} \\
		\boldsymbol{\nu} \\
		\boldsymbol{b}\\
	\end{bmatrix}.
\end{equation}
If the frame is a Darboux frame, then $t$ is an arc-length.
Therefore, $\kappa_1$ is the {\it normal curvature}, 
$\kappa_2$ is the {\it geodesic curvature}, 
and $\kappa_3$ is the {\it geodesic torsion}, 
which are invariants of the curve $\hat{\gamma}$ on the frontal $f$. 
However, since the frame is not necessarily a Darboux frame, 
these functions are not necessarily equal to these curvatures.

\subsection{Developable surfaces along curves}\label{sec:dev}
Let a curve $\tilde{\gamma}$ and 
a frame $\{\boldsymbol{e}, \boldsymbol{\nu}, \boldsymbol{b}\}$ 
along $\tilde{\gamma}$ 
be given as in Section \ref{sec:frame}.
In this case, 
a developable surface along $\tilde{\gamma}$ 
can be defined as the following way (See \cite{izohflat}, for example.).
For a unit vector field $\boldsymbol{v}$ along $\tilde{\gamma}$, 
we define the function $H_{\boldsymbol{v}}:I \times \R^3 \to \R$ 
by
$$
H_{\boldsymbol{v}}(t, X) = 
\boldsymbol{v}(t) \cdot \big(X-\tilde\gamma(t)\big).
$$
This is called the {\it height function with respect to $\boldsymbol{v}$}.
Furthermore, let us set $h_{\boldsymbol{v}}(t) = H_{\boldsymbol{v}}(t, 0)$. 
The function $H_{\boldsymbol{v}}$ can be interpreted as a 3-parameter family of 1-variable functions.
For each $t \in I$, the set
$$
\mathcal{H}_{\boldsymbol{v}}=\{X \in \R^3 \mid H_{\boldsymbol{v}}(t, X) = 0\}
$$
is a plane orthogonal to $\boldsymbol{v}$.
Thus $\mathcal{H}_{\boldsymbol{v}}$ is a 1-parameter family of planes.
Consider the envelope of this family of planes
$$
\mathcal{D}_{\boldsymbol{v}} 
=
\{X \in \R^3 \mid \text{ there exists }t \in I \text{ such that } 
H_{\boldsymbol{v}}(t, X) = H_{\boldsymbol{v}}'(t, X) = 0\}.
$$
However, for the case of $\boldsymbol{v}=\boldsymbol{e}$, 
it is a family of normal planes to $\tilde{\gamma}$, 
which has no meaning as a curve on the surface. 
Therefore, here we consider the cases for
$\boldsymbol{v}=\boldsymbol{\nu}$ and $\boldsymbol{v}=\boldsymbol{b}$.
Then we get two envelopes 
$\mathcal{D}_{\boldsymbol{\nu}}$ and $\mathcal{D}_{\boldsymbol{b}}$
constructed from the same curve.
The following holds.

\begin{lemma}\label{lem:svsb}
	We assume that $(\kappa_1,\kappa_3)\neq(0,0)$ on $I$.
	Then we set a ruled surface $S_{\boldsymbol{\nu}}(t,a)$.
	We assume that $(\kappa_2,\kappa_3)\neq(0,0)$ on $I$.
	Then we set a ruled surface $S_{\boldsymbol{b}}(t,a)$.
	Here, $S_{\boldsymbol{\nu}}(t,a)$ and $S_{\boldsymbol{b}}(t,a)$ are
	given by
	\begin{align*}
		S_{\boldsymbol{\nu}}(t,a) 
		&=
		\tilde{\gamma}(t) + a \delta_{\boldsymbol{\nu}}(t),\quad
		\left(\delta_{\boldsymbol{\nu}}(t)
		=
		\dfrac{\kappa_3 \boldsymbol{e} + \kappa_1 \boldsymbol{b}}
		{\sqrt{\kappa_3^2+\kappa_1^2}}\right); \\
		S_{\boldsymbol{b}}(t,a) 
		&= 
		\tilde{\gamma}(t) + a \delta_{\boldsymbol{b}}(t),\quad
		\left(\delta_{\boldsymbol{b}}(t)
		=
		\dfrac{\kappa_3 \boldsymbol{e} - \kappa_2 \boldsymbol{\nu}}
		{\sqrt{\kappa_3^2+\kappa_2^2}}\right).
	\end{align*}
	Under each assumption,
	the image of $S_{\boldsymbol{\nu}}$
	coincides with the set $\mathcal{D}_{\boldsymbol{\nu}}$,
	and the image of $S_{\boldsymbol{b}}$ 
	coincides with the set $\mathcal{D}_{\boldsymbol{b}}$.
\end{lemma}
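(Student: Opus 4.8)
The plan is to make the envelope conditions defining $\mathcal{D}_{\boldsymbol{\nu}}$ and $\mathcal{D}_{\boldsymbol{b}}$ fully explicit by means of the Frenet--Serret type formulas \eqref{eq:frenet}, and then to identify, for each fixed $t\in I$, the common zero set of the two conditions $H_{\boldsymbol{v}}(t,X)=H_{\boldsymbol{v}}'(t,X)=0$ as precisely the ruling line of $S_{\boldsymbol{\nu}}$ (resp.\ $S_{\boldsymbol{b}}$) through $\tilde\gamma(t)$. Taking the union over $t\in I$ then yields the claimed equality of sets, with both inclusions falling out of this pointwise description.

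For $\boldsymbol{v}=\boldsymbol{\nu}$, differentiating $H_{\boldsymbol{\nu}}(t,X)=\boldsymbol{\nu}(t)\cdot\big(X-\tilde\gamma(t)\big)$ with respect to $t$ gives $H_{\boldsymbol{\nu}}'(t,X)=\boldsymbol{\nu}'(t)\cdot\big(X-\tilde\gamma(t)\big)-\boldsymbol{\nu}(t)\cdot\tilde\gamma'(t)$; since $\tilde\gamma'=l\boldsymbol{e}$ and $\boldsymbol{e}\cdot\boldsymbol{\nu}=0$, the second term vanishes, so $H_{\boldsymbol{\nu}}'(t,X)=\boldsymbol{\nu}'(t)\cdot\big(X-\tilde\gamma(t)\big)$. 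By \eqref{eq:frenet} we have $\boldsymbol{\nu}'=-\kappa_1\boldsymbol{e}+\kappa_3\boldsymbol{b}$. Hence the pair of conditions $H_{\boldsymbol{\nu}}(t,X)=H_{\boldsymbol{\nu}}'(t,X)=0$ means exactly that $X-\tilde\gamma(t)$ is orthogonal both to $\boldsymbol{\nu}(t)$ and to $-\kappa_1(t)\boldsymbol{e}(t)+\kappa_3(t)\boldsymbol{b}(t)$. The first orthogonality places $X-\tilde\gamma(t)$ in the plane spanned by $\boldsymbol{e}(t)$ and $\boldsymbol{b}(t)$, and inside that plane the vector $\kappa_3\boldsymbol{e}+\kappa_1\boldsymbol{b}$ spans the orthogonal complement of $-\kappa_1\boldsymbol{e}+\kappa_3\boldsymbol{b}$ (their inner product is $-\kappa_1\kappa_3+\kappa_1\kappa_3=0$); so the second orthogonality forces $X-\tilde\gamma(t)$ to be a scalar multiple of $\kappa_3(t)\boldsymbol{e}(t)+\kappa_1(t)\boldsymbol{b}(t)$. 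Under the hypothesis $(\kappa_1,\kappa_3)\neq(0,0)$ this vector never vanishes, so $\delta_{\boldsymbol{\nu}}$ is a well-defined unit vector and the characteristic at $t$ is exactly the line $\{\tilde\gamma(t)+a\,\delta_{\boldsymbol{\nu}}(t):a\in\R\}$; the union over $t\in I$ is $\mathcal{D}_{\boldsymbol{\nu}}$, which therefore coincides with the image of $S_{\boldsymbol{\nu}}$.

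The case $\boldsymbol{v}=\boldsymbol{b}$ follows the identical pattern: because $\boldsymbol{e}\cdot\boldsymbol{b}=0$, we get $H_{\boldsymbol{b}}'(t,X)=\boldsymbol{b}'(t)\cdot\big(X-\tilde\gamma(t)\big)$, and \eqref{eq:frenet} gives $\boldsymbol{b}'=-\kappa_2\boldsymbol{e}-\kappa_3\boldsymbol{\nu}$. The two conditions then say that $X-\tilde\gamma(t)$ lies in the plane spanned by $\boldsymbol{e}(t)$ and $\boldsymbol{\nu}(t)$ and is orthogonal to $-\kappa_2\boldsymbol{e}-\kappa_3\boldsymbol{\nu}$, hence is a multiple of $\kappa_3\boldsymbol{e}-\kappa_2\boldsymbol{\nu}$ (again the inner product $-\kappa_2\kappa_3+\kappa_2\kappa_3$ vanishes), which is nonzero thanks to $(\kappa_2,\kappa_3)\neq(0,0)$; this identifies the characteristic at $t$ with the ruling of $S_{\boldsymbol{b}}$, so $\mathcal{D}_{\boldsymbol{b}}$ coincides with the image of $S_{\boldsymbol{b}}$.

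I do not expect a genuine obstacle here. The only points calling for care are: verifying that the terms $\boldsymbol{\nu}\cdot\tilde\gamma'$ and $\boldsymbol{b}\cdot\tilde\gamma'$ drop out (this is exactly where the assumption $\tilde\gamma'=l\boldsymbol{e}$ enters), confirming that the two nondegeneracy hypotheses $(\kappa_1,\kappa_3)\neq(0,0)$ and $(\kappa_2,\kappa_3)\neq(0,0)$ are precisely what makes $\delta_{\boldsymbol{\nu}}$ and $\delta_{\boldsymbol{b}}$ well-defined, and spelling out both set inclusions from the pointwise description (a point $\tilde\gamma(t)+a\,\delta_{\boldsymbol{\nu}}(t)$ satisfies both envelope equations at that $t$, and conversely any $X\in\mathcal{D}_{\boldsymbol{\nu}}$ lies on the characteristic line for some $t$).
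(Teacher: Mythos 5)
Your proposal is correct and follows essentially the same route as the paper: compute $H_{\boldsymbol{v}}'$ using $\tilde\gamma'=l\boldsymbol{e}$ and the Frenet--Serret type formulas \eqref{eq:frenet}, observe that the two conditions force $X-\tilde\gamma(t)$ to lie on the line spanned by $\kappa_3\boldsymbol{e}+\kappa_1\boldsymbol{b}$ (resp.\ $\kappa_3\boldsymbol{e}-\kappa_2\boldsymbol{\nu}$), and use the nondegeneracy hypotheses to normalize $\delta_{\boldsymbol{\nu}}$, $\delta_{\boldsymbol{b}}$. Your orthogonal-complement phrasing even handles the case $\kappa_3=0$ a bit more uniformly than the paper's choice $a=c_1/\kappa_3$, but the argument is the same.
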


\begin{proof}
	We show the case $\boldsymbol{v}=\boldsymbol{\nu}$.
	By the condition $H_{\boldsymbol{\nu}}(t, X) = 0$, 
	it holds that there exist $c_1,c_2 \in \R$ such that
	$X-\hat{\gamma}(t) 
	= c_1 \boldsymbol{e} + c_2 \boldsymbol{b}.$
	By \eqref{eq:frenet} and $\hat{\gamma}'=l(t)\boldsymbol{e}$.
	Moreover, substituting $X-\hat{\gamma}(t)$ into the formula
	$H_{\boldsymbol{\nu}}'(t, X) = 0$, we get
	\begin{align*}
		H_{\boldsymbol{\nu}}'(t, X) 
		&= \boldsymbol{\nu}' \cdot \big(X-\hat{\gamma}\big) + \boldsymbol{\nu} \cdot \big(X-\hat{\gamma}\big)'\\
		&= \boldsymbol{\nu}' \cdot \big(X-\hat{\gamma}\big) + \boldsymbol{\nu} \cdot \big(X'-\hat{\gamma}'\big)\\
		&= \boldsymbol{\nu}' \cdot \big(X-\hat{\gamma}\big) + \boldsymbol{\nu} \cdot \big(0-l\boldsymbol{e}\big)\\
		&= \boldsymbol{\nu}' \cdot \big(X-\hat{\gamma}\big)\\
		&= (-\kappa_1 \boldsymbol{e} + \kappa_3 \boldsymbol{b}) \cdot (c_1 \boldsymbol{e} + c_2 \boldsymbol{b}) \\
		&= -c_1 \kappa_1 + c_2 \kappa_3\\
		&= 0.
	\end{align*}
	Thus set
	$X-\hat{\gamma}(t) 
	= a \left(\kappa_3\boldsymbol{e} + \kappa_1 \boldsymbol{b}\right)$, where $a=c_1/\kappa_3\in\R$.
	Hence the image of 
	$$
	X(t,a)
	=\tilde{\gamma}(t) + a \left(\kappa_3\boldsymbol{e} + \kappa_1 \boldsymbol{b}\right)
	$$ coincides with 
	$\mathcal{D}_{\boldsymbol{\nu}}$.
	We set
	$\delta_{\boldsymbol{\nu}}(t)=x/|x|$
	and $S_{\boldsymbol{\nu}}(t,a)=\hat{\gamma}+a\delta_{\boldsymbol{\nu}}$, where
	$x=\kappa_3\boldsymbol{e} + \kappa_1 \boldsymbol{b}$,
	by the assumption $(\kappa_1,\kappa_3)\neq(0,0)$.
	Obviously, the image of $S_{\boldsymbol{\nu}}(t,a)$ is the same as that of $X(t,a)$, and therefore the same as that of $\mathcal{D}_{\boldsymbol{\nu}}$.
	Thus we can get the conclusion.
	We can show the case of $\boldsymbol{v}=\boldsymbol{b}$ by a similar calculation.
\end{proof}

\begin{lemma}\label{lem:s2s3frl}
	Both surfaces $S_{\boldsymbol{\nu}}$ and $S_{\boldsymbol{b}}$ are frontals.
	In particular, 
	$\boldsymbol{\nu}$ can be taken as the unit normal vector of $S_{\boldsymbol{\nu}}$, 
	and $\boldsymbol{b}$ can be taken as the unit normal vector of $S_{\boldsymbol{b}}$.
\end{lemma}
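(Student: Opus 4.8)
The plan is to verify directly from the definition of a frontal that $\boldsymbol{\nu}$, regarded as a vector field on the parameter domain $I\times\R$ of $S_{\boldsymbol{\nu}}$ which is constant in the ruling parameter $a$, serves as a unit normal of $S_{\boldsymbol{\nu}}$; the assertion for $S_{\boldsymbol{b}}$ is entirely parallel with $\boldsymbol{b}$ in place of $\boldsymbol{\nu}$. Since $|\boldsymbol{\nu}|=1$ holds by construction, what remains is to check $dS_{\boldsymbol{\nu}}(X)\cdot\boldsymbol{\nu}=0$ for every tangent vector $X$, and because $\partial_t S_{\boldsymbol{\nu}}$ and $\partial_a S_{\boldsymbol{\nu}}$ span the image of $dS_{\boldsymbol{\nu}}$ at each point (even where $S_{\boldsymbol{\nu}}$ is singular), it suffices to test these two.

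First I would compute $\partial_a S_{\boldsymbol{\nu}}=\delta_{\boldsymbol{\nu}}$. By its definition $\delta_{\boldsymbol{\nu}}$ is a linear combination of $\boldsymbol{e}$ and $\boldsymbol{b}$, so $\partial_a S_{\boldsymbol{\nu}}\cdot\boldsymbol{\nu}=0$ is immediate from orthonormality of the frame $\{\boldsymbol{e},\boldsymbol{\nu},\boldsymbol{b}\}$. Next I would compute $\partial_t S_{\boldsymbol{\nu}}=\tilde\gamma'+a\,\delta_{\boldsymbol{\nu}}'=l\boldsymbol{e}+a\,\delta_{\boldsymbol{\nu}}'$, using $\tilde\gamma'=l\boldsymbol{e}$. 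Again $l\boldsymbol{e}\cdot\boldsymbol{\nu}=0$, so the whole matter reduces to showing $\delta_{\boldsymbol{\nu}}'\cdot\boldsymbol{\nu}=0$. Writing $\delta_{\boldsymbol{\nu}}=\rho^{-1}(\kappa_3\boldsymbol{e}+\kappa_1\boldsymbol{b})$ with $\rho=\sqrt{\kappa_1^2+\kappa_3^2}$ (smooth and positive under the standing assumption), differentiating, and discarding the terms proportional to $\boldsymbol{e}$ and $\boldsymbol{b}$, which vanish against $\boldsymbol{\nu}$, the only surviving contributions come from $\boldsymbol{e}'$ and $\boldsymbol{b}'$; by \eqref{eq:frenet} these contribute $\rho^{-1}(\kappa_3\kappa_1+\kappa_1(-\kappa_3))=0$. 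Hence $\partial_t S_{\boldsymbol{\nu}}\cdot\boldsymbol{\nu}=0$ identically on $I\times\R$, and $S_{\boldsymbol{\nu}}$ is a frontal with unit normal $\boldsymbol{\nu}$. For $S_{\boldsymbol{b}}$ I would repeat the computation with $\delta_{\boldsymbol{b}}=\rho^{-1}(\kappa_3\boldsymbol{e}-\kappa_2\boldsymbol{\nu})$, $\rho=\sqrt{\kappa_2^2+\kappa_3^2}$: here $\partial_a S_{\boldsymbol{b}}=\delta_{\boldsymbol{b}}\perp\boldsymbol{b}$, $\tilde\gamma'=l\boldsymbol{e}\perp\boldsymbol{b}$, and $\delta_{\boldsymbol{b}}'\cdot\boldsymbol{b}$ picks up only $\boldsymbol{e}'\cdot\boldsymbol{b}$ and $\boldsymbol{\nu}'\cdot\boldsymbol{b}$, which by \eqref{eq:frenet} combine to $\rho^{-1}(\kappa_3\kappa_2-\kappa_2\kappa_3)=0$.

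I do not expect a genuine obstacle here: the argument is a short computation with the Frenet--Serret equations \eqref{eq:frenet}. The point that deserves care is conceptual rather than technical, and is essentially the content of the lemma: the ruling directions $\delta_{\boldsymbol{\nu}}$ and $\delta_{\boldsymbol{b}}$ were extracted in Lemma \ref{lem:svsb} precisely as the characteristic directions of the respective envelopes, and it is exactly this choice that forces the $\boldsymbol{\nu}$-component of $\delta_{\boldsymbol{\nu}}'$ (resp.\ the $\boldsymbol{b}$-component of $\delta_{\boldsymbol{b}}'$) to vanish. I would also record the minor bookkeeping that $\boldsymbol{\nu}$ and $\boldsymbol{b}$ are a priori defined only along $\tilde\gamma$, so they must be pulled back to the surface domain as $a$-independent fields before the frontal condition can even be stated; once that is done, the verification above applies verbatim.
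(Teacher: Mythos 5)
Your proof is correct and follows essentially the same route as the paper: both verify the frontal condition by checking that $\partial_t S$ and $\partial_a S$ are orthogonal to the candidate normal ($\boldsymbol{\nu}$ resp.\ $\boldsymbol{b}$) via the Frenet--Serret type formulas \eqref{eq:frenet}, the only difference being that the paper writes out the full expansions \eqref{eq:s2ta} and \eqref{eq:s3ta} of the $t$-derivatives in the frame, while you shortcut to the single inner product $\delta'\cdot\boldsymbol{\nu}$ (resp.\ $\delta'\cdot\boldsymbol{b}$). No gap.
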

\begin{proof}
	Let $(S_i)_t=\partial S_i/\partial t$ and 
	$(S_i)_a=\partial S_i/\partial a$, 
	then we see
	\begin{align}
		(S_{\boldsymbol{\nu}})_t(t,a) & = 
		\Big(
		l
		+
		a\big(
		\dfrac{\kappa_3'-\kappa_1\kappa_2}{\sqrt{\kappa_3^2+\kappa_1^2}}
		-\dfrac{\kappa_3(\kappa_1\kappa_1'+\kappa_3\kappa_3')}
		{\sqrt{\kappa_3^2+\kappa_1^2}^3}\big)
		\Big)\boldsymbol{e}\label{eq:s2ta}
		\\
		&\hspace{20mm}+
		a\Big(
		\dfrac{\kappa_1'+\kappa_2\kappa_3}{\sqrt{\kappa_3^2+\kappa_1^2}}
		-
		\dfrac{\kappa_1(\kappa_1\kappa_1'+\kappa_3\kappa_3')}
		{\sqrt{\kappa_3^2+\kappa_1^2}^3}
		\Big)\boldsymbol{b}\nonumber \\
		(S_{\boldsymbol{b}})_t(t,a)&=
		\Big(l+a\big(\dfrac{\kappa_3'+\kappa_1\kappa_2}
		{\sqrt{\kappa_3^2+\kappa_2^2}}
		-\dfrac{\kappa_3(\kappa_2\kappa_2'+\kappa_3\kappa_3')}
		{\sqrt{\kappa_3^2+\kappa_2^2}^3}\big)\Big)
		\boldsymbol{e}\label{eq:s3ta}\\
		&\hspace{20mm}
		+a\Big(\dfrac{\kappa_1\kappa_3-\kappa_2'}{\sqrt{\kappa_3^2+\kappa_2^2}}+\dfrac{\kappa_2(\kappa_2\kappa_2'+\kappa_3\kappa_3')}{\sqrt{\kappa_3^2+\kappa_2^2}^3}\Big)\boldsymbol{\nu},  \nonumber
	\end{align}
	and
	$(S_{\boldsymbol{\nu}})_a(t)=\delta_{\boldsymbol{\nu}}$, $(S_{\boldsymbol{b}})_a(t) =\delta_{\boldsymbol{b}}$.
	Therefore, 
	the unit normal vector of $S_{\boldsymbol{\nu}}$ 
	can be taken as $\boldsymbol{\nu}$, and
	the unit normal vector of $S_{\boldsymbol{b}}$ 
	can be taken as $\boldsymbol{b}$.
	Where $l(t)$ is the function 
	satisfying $\tilde{\gamma}'(t)=l\boldsymbol{e}$.
\end{proof}
Moreover, the following holds.
\begin{lemma}
	The frontals $S_{\boldsymbol{\nu}}$ and $S_{\boldsymbol{b}}$ 
	are developable surfaces.
\end{lemma}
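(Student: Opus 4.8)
The plan is to verify the vanishing of the Gaussian curvature on the regular set of each surface via the classical criterion for ruled surfaces: if $r(t,a)=c(t)+a\delta(t)$ with $|\delta|=1$, then, since $r_{aa}=0$, the coefficient $N$ of the second fundamental form is zero and $K=-M^2/(EG-F^2)$; moreover $M\,|r_t\times r_a|=r_{ta}\cdot(r_t\times r_a)=\delta'\cdot\big((c'+a\delta')\times\delta\big)=\delta'\cdot(c'\times\delta)=\det(c',\delta,\delta')$. Hence $K\equiv0$ precisely when $\det(c',\delta,\delta')\equiv0$, i.e.\ when the three vectors $c'$, $\delta$, $\delta'$ are everywhere linearly dependent (equivalently, the tangent plane $\operatorname{span}\{c'+a\delta',\delta\}$ is constant along each ruling). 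So it suffices to check this linear dependence for $S_{\boldsymbol{\nu}}$ and $S_{\boldsymbol{b}}$.

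First I would treat $S_{\boldsymbol{\nu}}$. Here $c=\tilde{\gamma}$, so $c'=l\boldsymbol{e}$, and by definition $\delta_{\boldsymbol{\nu}}=(\kappa_3\boldsymbol{e}+\kappa_1\boldsymbol{b})/\sqrt{\kappa_1^2+\kappa_3^2}$ is a combination of $\boldsymbol{e}$ and $\boldsymbol{b}$ only; thus $c'$ and $\delta_{\boldsymbol{\nu}}$ both lie in $\operatorname{span}\{\boldsymbol{e},\boldsymbol{b}\}$. It remains to show $\delta_{\boldsymbol{\nu}}'\in\operatorname{span}\{\boldsymbol{e},\boldsymbol{b}\}$, i.e.\ that its $\boldsymbol{\nu}$-component vanishes. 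Differentiating $\delta_{\boldsymbol{\nu}}$ and using \eqref{eq:frenet}, the derivative of the scalar factor $1/\sqrt{\kappa_1^2+\kappa_3^2}$ contributes only to the $\boldsymbol{e}$- and $\boldsymbol{b}$-directions, while $\boldsymbol{e}'=\kappa_1\boldsymbol{\nu}+\kappa_2\boldsymbol{b}$ and $\boldsymbol{b}'=-\kappa_2\boldsymbol{e}-\kappa_3\boldsymbol{\nu}$ produce a $\boldsymbol{\nu}$-component proportional to $\kappa_3\kappa_1-\kappa_1\kappa_3=0$. Therefore $c',\delta_{\boldsymbol{\nu}},\delta_{\boldsymbol{\nu}}'$ all lie in the $2$-plane $\operatorname{span}\{\boldsymbol{e},\boldsymbol{b}\}$, so $\det(c',\delta_{\boldsymbol{\nu}},\delta_{\boldsymbol{\nu}}')\equiv0$ and $S_{\boldsymbol{\nu}}$ has zero Gaussian curvature; being a ruled surface, it is developable. (This is consistent with Lemma~\ref{lem:s2s3frl}: the unit normal $\boldsymbol{\nu}$ depends on $t$ only, so the Gauss map is constant along each ruling.) One may also read this off directly from \eqref{eq:s2ta}, which exhibits $(S_{\boldsymbol{\nu}})_t$, and hence the tangent plane together with $(S_{\boldsymbol{\nu}})_a=\delta_{\boldsymbol{\nu}}$, as lying in $\operatorname{span}\{\boldsymbol{e}(t),\boldsymbol{b}(t)\}$.

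The case of $S_{\boldsymbol{b}}$ is identical with the roles of $\boldsymbol{\nu}$ and $\boldsymbol{b}$ interchanged: $c'=l\boldsymbol{e}$ and $\delta_{\boldsymbol{b}}=(\kappa_3\boldsymbol{e}-\kappa_2\boldsymbol{\nu})/\sqrt{\kappa_2^2+\kappa_3^2}$ lie in $\operatorname{span}\{\boldsymbol{e},\boldsymbol{\nu}\}$, and differentiating $\delta_{\boldsymbol{b}}$ via \eqref{eq:frenet} yields a $\boldsymbol{b}$-component proportional to $\kappa_3\kappa_2-\kappa_2\kappa_3=0$ (using $\boldsymbol{e}'=\kappa_1\boldsymbol{\nu}+\kappa_2\boldsymbol{b}$, $\boldsymbol{\nu}'=-\kappa_1\boldsymbol{e}+\kappa_3\boldsymbol{b}$), so $c',\delta_{\boldsymbol{b}},\delta_{\boldsymbol{b}}'\in\operatorname{span}\{\boldsymbol{e},\boldsymbol{\nu}\}$ and the determinant vanishes; equivalently \eqref{eq:s3ta} shows the tangent plane equals $\operatorname{span}\{\boldsymbol{e}(t),\boldsymbol{\nu}(t)\}$. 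Thus both $S_{\boldsymbol{\nu}}$ and $S_{\boldsymbol{b}}$ are developable.

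I do not expect a serious obstacle. The only points needing care are that the Gaussian curvature is evaluated on the regular set of the frontal (which is all the criterion requires), and that if $\delta_{\boldsymbol{\nu}}'$ or $\delta_{\boldsymbol{b}}'$ happens to vanish on some subinterval the surface is locally a cylinder there, which is still developable by the classification of developable surfaces recalled above; so the conclusion holds with no extra hypotheses. The heart of the computation is simply the cancellation $\kappa_3\kappa_1-\kappa_1\kappa_3=0$ (resp.\ $\kappa_3\kappa_2-\kappa_2\kappa_3=0$), which is forced by the skew-symmetry of the coefficient matrix in \eqref{eq:frenet}.
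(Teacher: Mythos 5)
Your argument is correct, but it runs along a different (more computational) track than the paper's. The paper's proof is a one-liner that leans entirely on Lemma \ref{lem:s2s3frl}: since the unit normal of $S_{\boldsymbol{\nu}}$ can be taken to be $\boldsymbol{\nu}(t)$ (and that of $S_{\boldsymbol{b}}$ to be $\boldsymbol{b}(t)$), the normal is independent of the ruling parameter $a$, so the Gauss map is constant along each ruling and the surface is developable --- exactly the observation you relegate to a parenthesis. You instead verify the classical flatness criterion for a ruled surface, $K=-M^{2}/(EG-F^{2})$ with $M$ proportional to $\det(c',\delta,\delta')$, and check by the Frenet-type formulas \eqref{eq:frenet} that $c'=l\boldsymbol{e}$, $\delta_{\boldsymbol{\nu}}$ and $\delta_{\boldsymbol{\nu}}'$ all lie in $\operatorname{span}\{\boldsymbol{e},\boldsymbol{b}\}$ (the $\boldsymbol{\nu}$-component cancels as $\kappa_3\kappa_1-\kappa_1\kappa_3=0$), and analogously for $S_{\boldsymbol{b}}$; this reproves from scratch what the paper already has in \eqref{eq:betav} and \eqref{eq:deltab}, where $\delta_{\boldsymbol{\nu}}'$ and $\delta_{\boldsymbol{b}}'$ are written explicitly with no $\boldsymbol{\nu}$- (resp.\ $\boldsymbol{b}$-) component. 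What your route buys is a self-contained verification valid on the regular set, with the cancellation traced to the skew-symmetry of the coefficient matrix in \eqref{eq:frenet}, and explicit handling of the degenerate case $\delta'\equiv0$; what the paper's route buys is brevity and the fact that it works directly in the frontal setting, since ``unit normal independent of $a$'' makes sense even at singular points, whereas the Gaussian-curvature computation only speaks on the regular locus. Both establish the statement.
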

\begin{proof}
	As seen above, 
	the unit normal vector of $S_{\boldsymbol{\nu}}$ can be taken as $\boldsymbol{\nu}$,
	and the unit normal vector of $S_{\boldsymbol{b}}$ can be taken as $\boldsymbol{b}$.
	Therefore, the derivatives of these vectors with respect to $a$ are zero. 
	This shows the assertion.
\end{proof}
\subsection{Singularities and their criteria}\label{sec:sing}
In this section, we consider only local properties
and we describe using the notion of germs.
For details, see \cite{krsuy, usybook}.
\begin{definition}
	Two map germs $f,g:(\R^2,0)\to(\R^3,0)$ 
	are called {\it $\A$-equivalent} 
	if there exist a diffeomorphism $\phi:(\R^2,0)\to(\R^2,0)$ of the domain 
	and a diffeomorphism $\Phi:(\R^3,0)\to(\R^3,0)$ of the codomain 
	such that
	$$
	\Phi\circ f\circ \phi^{-1}=g.
	$$
\end{definition}
The generic singularities of frontals are the following.
A map-germ $f$ is called a {\it cuspidal edge} 
if it is $\A$-equivalent to $(u,v)\mapsto(u,v^2,v^3)$ at the origin, as shown in the Figure \ref{fig:cusp}.
A map-germ $f$ is called a {\it swallowtail} 
if it is $\A$-equivalent to $(u,v)\mapsto(u,4v^3+2uv,3v^4+uv^2)$
at the origin, as shown in the Figure \ref{fig:swall}.
\begin{figure}[htbp]
	\centering
	\begin{minipage}[b]{0.50\textwidth}
		\centering		\includegraphics[width=4cm]{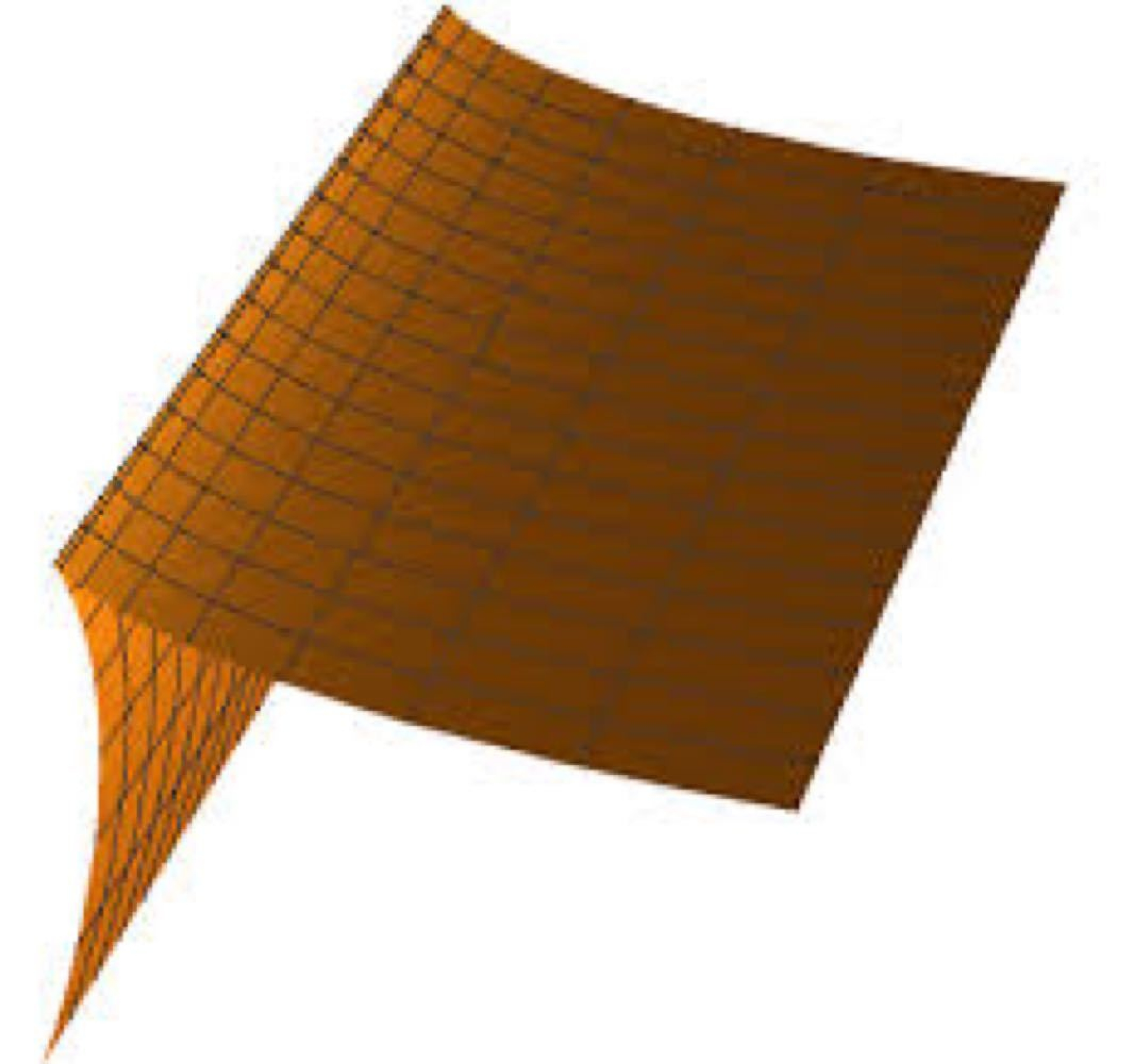}
		\caption{Cuspidal edge}
		\label{fig:cusp}
	\end{minipage}
	\hfill
	\begin{minipage}[b]{0.49\textwidth}
		\centering		\includegraphics[width=4cm]{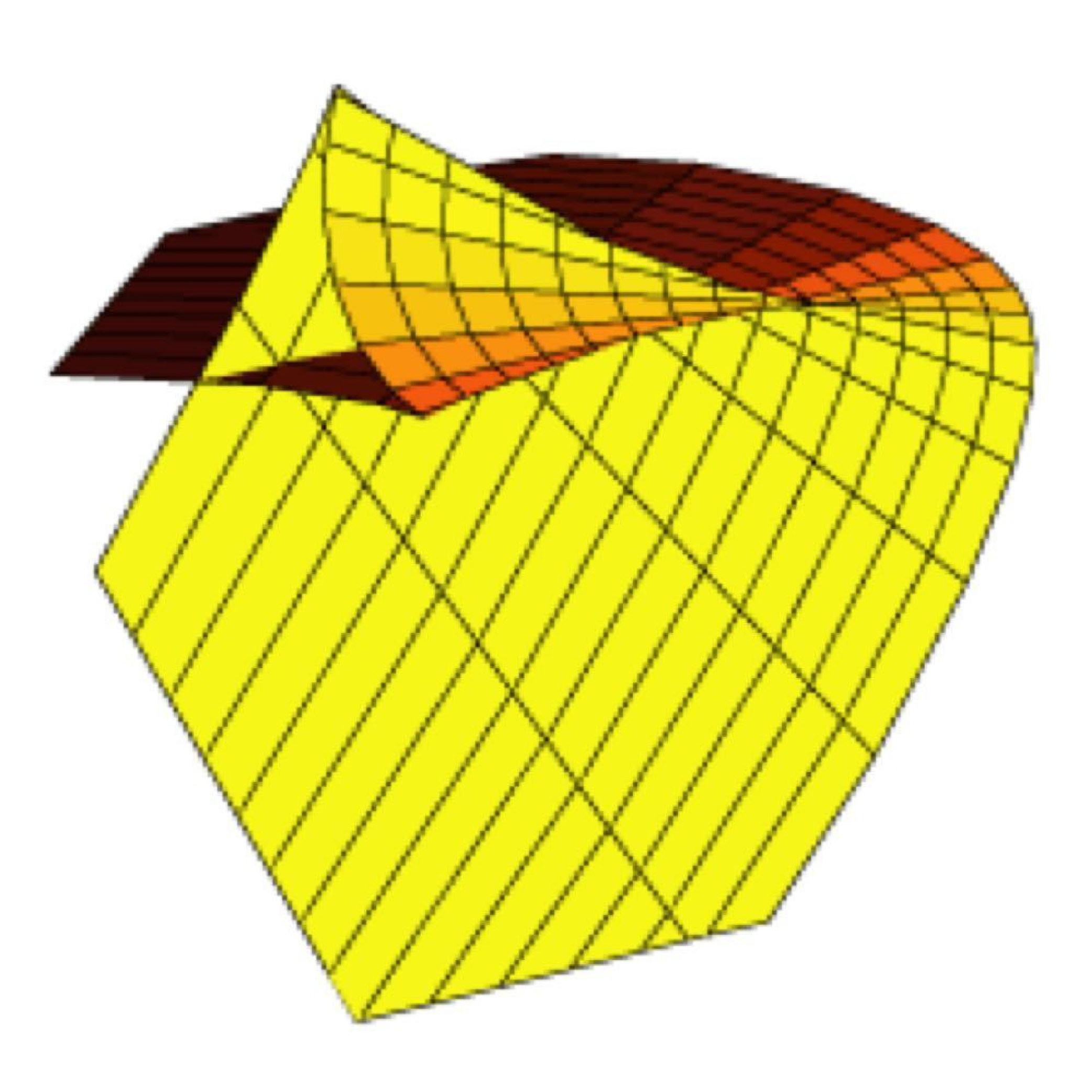}
		\caption{Swallowtail}
		\label{fig:swall}
	\end{minipage}
\end{figure}

There are useful methods to determine 
whether these singularities are of the types mentioned above.
Let $f:(\R^2,0)\to(\R^3,0)$ be a front. 
Let $\boldsymbol{\nu}$ be the unit normal vector of front $f$.
We take a coordinate system $(u,v)$.
\begin{definition}
	A function $\lambda$ 
	is called a {\it identifier of singularities} 
	if $\lambda$ is a non-zero scalar multiple of
	$$
	\lambda(u, v):=\det(f_u,f_v,\boldsymbol{\nu}).
	$$
\end{definition}
If $\lambda$ is a singularity identifier, 
then $\lambda^{-1}(0) = S(f)$, 
where $S(f)$ is the set of singularities of $f$.
A singularity $p \in S(f)$ of $f$ 
is called {\it non-degenerate} 
if $d\lambda_p \ne 0$. 
When $p$ is a non-degenerate singularity, 
$S(f)$ is a regular curve near $p$.
Let $f:(\R^2,0)\to(\R^3,0)$ be a front such that $\rm rank$ $df_0 = 1$,
there exists a vector field $\eta$ 
such that for any $p \in S(f)$, it holds that
$$
\ker df_p := \left\langle \eta_p \right\rangle_{\R}
$$
This $\eta$ is called a {\it null vector field}.
Criteria for cuspidal edges and swallowtails are given through the singularity identifier and the null vector field.
Then the following holds.
\begin{theorem}\cite{krsuy}
	Let $f:(\R^2,0)\to(\R^3,0)$ be a front with $\rank df_0=1$. 
	Let $\lambda$ be a singularity identifier 
	and $\eta$ be a null vector field.
	The front $f$ is a cuspidal edge if and only if 
	$$
	\eta\lambda(0)\ne0
	$$
	hold. The front $f$ is a swallowtail if and only if
	$$
	\eta\lambda(0)=0,\quad
	\eta\eta\lambda(0)\ne0,\quad
	d\lambda(0)\ne0
	$$
	hold.
\end{theorem}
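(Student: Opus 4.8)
The plan is to reduce $f$ to an adapted coordinate system in which the two criteria become transparent, and then to recognise the standard normal forms. First I would note that in both cases the singular point $0$ is non-degenerate: since $\eta\lambda(0)=d\lambda_0(\eta_0)$, the condition $\eta\lambda(0)\neq0$ already forces $d\lambda_0\neq0$, and in the swallowtail case $d\lambda(0)\neq0$ is assumed. Hence $S(f)=\lambda^{-1}(0)$ is a regular curve through $0$, and we may choose coordinates $(u,v)$ with $S(f)=\{v=0\}$; because $\lambda$ vanishes identically on $\{v=0\}$ we get $\lambda_u\equiv0$ there, so $\lambda_v(0)\neq0$. Writing the (extended) null vector field as $\eta=a(u)\partial_u+b(u)\partial_v$ along $S(f)$, we obtain $\eta\lambda(0)=b(0)\lambda_v(0)$, so $\eta\lambda(0)\neq0$ is exactly the statement that $\eta_0$ is transverse to the singular curve, while $\eta\lambda(0)=0$ says it is tangent. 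Thus the theorem is a ``transverse versus tangent null direction'' dichotomy, the swallowtail case carrying the extra demand that the tangency be of exact first order.

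For the cuspidal-edge direction I would use the transversality to pass to coordinates in which $S(f)=\{v=0\}$ and simultaneously $\eta=\partial_v$; this is possible because $b(0)\neq0$ lets one straighten the null field while keeping $\{v=0\}$ fixed. Then $f_v(u,0)\equiv0$, so Hadamard's lemma gives $f(u,v)=f(u,0)+v^2g(u,v)$ with $g$ smooth. Differentiating the front relations $\nu\cdot f_u=\nu\cdot f_v=0$ in $v$ along $\{v=0\}$, together with the fact that $(f,\nu)$ is an immersion, forces the second- and third-order $v$-jets of $f$ along the singular curve to be precisely those of a semicubical cusp; a Whitney-type change of the source coordinates (absorbing the $u$-dependence of $g$) together with a diffeomorphism of the target then brings $f$ to $(u,v^2,v^3)$. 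The converse is immediate: on the model $(u,v^2,v^3)$ the null field $\partial_v$ is transverse to $S(f)=\{v=0\}$, hence $\eta\lambda\neq0$, and this is an $\A$-invariant statement.

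For the swallowtail direction the null direction is tangent to $S(f)$ at $0$, so after a coordinate change we may write $\eta=\partial_u+e(u,v)\partial_v$ with $e(0,0)=0$. Now $\eta\eta\lambda(0)\neq0$ says exactly that the restriction of $\lambda$ to the integral curve of $\eta$ through $0$ has a non-degenerate (Morse) critical point there, while $d\lambda(0)\neq0$ prevents the tangency of $\eta$ to $S(f)$ from being of second or higher order. A further normal-form reduction — using finite determinacy and the versality of the $A_3$ unfolding — then shows that these data pin $f$ down, up to $\A$-equivalence, to $(u,4v^3+2uv,3v^4+uv^2)$, and the converse is obtained by checking the three conditions on that explicit model. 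Alternatively one can treat both cases uniformly by lifting $f$ to the Legendrian immersion $(f,\nu):(\R^2,0)\to\R^3\times S^2$ and invoking Arnold's classification of wavefront singularities, where the cuspidal edge and the swallowtail are the $A_2$ and $A_3$ germs and the order of vanishing of $\lambda$ in the null direction is the corank-one Thom--Boardman datum separating them.

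The step I expect to be the main obstacle is the normal-form reduction in the swallowtail case. Matching the precise polynomial $(u,4v^3+2uv,3v^4+uv^2)$ requires controlling several low-order Taylor coefficients of $f$ and invoking finite determinacy of the $A_3$-germ, whereas the cuspidal edge is handled by an essentially elementary division argument. The delicate point is to see that $\eta\lambda(0)=0$ and $\eta\eta\lambda(0)\neq0$ correspond to the $A_3$ normal form with no spurious extra conditions; the bookkeeping is cleanest if one first reduces $f$ modulo higher-order terms to a two-parameter polynomial model and only then identifies that model with the standard swallowtail unfolding.
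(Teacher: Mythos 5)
The theorem you are proving is not proved in the paper at all: it is quoted verbatim from \cite{krsuy} as a known criterion, so there is no in-paper argument to compare with. Judged on its own, your plan correctly identifies the geometric content (non-degeneracy of the singular point, and the dichotomy ``null direction transverse to $S(f)$'' versus ``first-order tangency''), and the adapted coordinates with $S(f)=\{v=0\}$ and $\eta=\partial_v$ (resp.\ $\eta=\partial_u+e\,\partial_v$) are exactly the right starting point. But as a proof it has genuine gaps. In the cuspidal-edge case, the sentence claiming that differentiating $\nu\cdot f_u=\nu\cdot f_v=0$ ``forces the second- and third-order $v$-jets to be those of a semicubical cusp'' is precisely where the work lies: one must use immersivity of $(f,\nu)$ to get $\nu_v\neq0$ along $\{v=0\}$, combine it with $\eta\lambda\neq0$ to show $f_u$, $f_{vv}$ are independent and that the relevant third-order coefficient is nonzero, and only then can a Whitney-type change of coordinates produce $(u,v^2,v^3)$; none of this is carried out. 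In the swallowtail case you explicitly defer the normal-form reduction to ``finite determinacy and versality of the $A_3$ unfolding,'' but $f$ is a map-germ, not a function unfolding, so invoking $A_3$-versality requires first passing to a generating family or Legendrian lift and proving that the hypotheses $\eta\lambda(0)=0$, $\eta\eta\lambda(0)\neq0$, $d\lambda(0)\neq0$ translate into the versality/non-degeneracy conditions there; this translation is the substance of the theorem in \cite{krsuy} and is missing. The appeal to Arnold's classification in the last alternative is likewise not a proof, since identifying which generic wavefront germ occurs under the stated conditions is exactly what has to be shown.

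Two smaller points. First, the role of $d\lambda(0)\neq0$ is misstated: it does not ``prevent the tangency of $\eta$ to $S(f)$ from being of second or higher order'' (that is controlled by $\eta\eta\lambda(0)\neq0$); it guarantees that the singular point is non-degenerate, so that $S(f)$ is a regular curve and the tangency discussion makes sense at all. Second, the ``converse'' directions rely on the conditions $\eta\lambda(0)\neq0$, $\eta\eta\lambda(0)\neq0$ being independent of the choices of $\lambda$ and $\eta$ and invariant under $\A$-equivalence; this is true (on $S(f)$ the null direction is unique up to scale and $\lambda$ changes by a nonvanishing factor and a diffeomorphism of the source), but it needs to be stated and checked rather than asserted.
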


\section{Geometry and Singularities of Surfaces $S_{\boldsymbol{\nu}}$ and $S_{\boldsymbol{b}}$}
In this section, we will describe the conditions that $S_{\boldsymbol{\nu}}(t,a)$ and $S_{\boldsymbol{b}}(t,a)$ obtained in Lemma \ref{lem:svsb} to be cylinder or cone and having singularities introduced in Section \ref{sec:sing}
in terms of the invariants $(\kappa_1,\kappa_2,\kappa_3)$ and the length function $l(t)$ of $\hat{\gamma}(t)$.

\subsection{Properties of the Surface $S_{\boldsymbol{\nu}}$}\label{sub:sv}
In this subsection, we assume $(\kappa_1,\kappa_3)\neq(0,0)$ 
for any $t\in I$.
By a direct calculation, we have
\begin{equation}\label{eq:betav}
	\delta_{\boldsymbol{\nu}}'(t)=
	\beta_{\boldsymbol{\nu}}\boldsymbol{w},\quad
	\left(\beta_{\boldsymbol{\nu}}(t)=
	\kappa_1^2\kappa_2+ \kappa_2\kappa_3^2+ \kappa_1'\kappa_3  - \kappa_1\kappa_3',\quad
	\boldsymbol{w}(t)=
	\frac{-\kappa_1\boldsymbol{e}+\kappa_3\boldsymbol{b}}{(\kappa_3^2+\kappa_1^2)^\frac{3}{2}}\right).
\end{equation}
If the developable surface $S_{\boldsymbol{\nu}}$ is non-cylindrical, then setting
\begin{equation}\label{eq:stnu}
	s(t) 
	= \frac{l\kappa_1\sqrt{\kappa_3^2+\kappa_1^2}}
	{\beta_{\boldsymbol{\nu}}},
\end{equation}
striction curve is obtained by $\hat{\sigma}_{\boldsymbol{\nu}}(t) = S_{\boldsymbol{\nu}}(t, s(t))$.
Under the assumption $(\kappa_1,\kappa_3)\neq(0,0)$.
The singular points of $S_{\boldsymbol{\nu}}$ 
satisfies that
$S(S_{\boldsymbol{\nu}}) = \{(t, a) \mid a = s(t)\}$.
We have the follows.
\begin{theorem}\label{thm:s3cylcone}
	$(1).$The developable surface $S_{\boldsymbol{\nu}}$ is a cylinder if and only if
	$$
	\beta_{\boldsymbol{\nu}}\equiv 0,
	$$
	where $\equiv$ stands for the equality holds identically.
	Similarly, $S_{\boldsymbol{\nu}}$ is non-cylindrical if 
	$\beta_{\boldsymbol{\nu}}$ never vanishes on $I$.
	
	$(2).$The developable surface $S_{\boldsymbol{\nu}}$ is a {\it cone} if and only if
	$\beta_{\boldsymbol{\nu}} \neq 0$ and $\rho_{\boldsymbol{\nu}}\equiv0$, where
	$$
	\rho_{\boldsymbol{\nu}}(t)
	=
	l\big(\beta_{\boldsymbol{\nu}}(\kappa_2\kappa_3+2\kappa_1')-\beta_{\boldsymbol{\nu}}'\kappa_1\big)+l'\kappa_1\beta_{\boldsymbol{\nu}}.
	$$
\end{theorem}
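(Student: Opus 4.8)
The plan is to reduce both parts to computations with the ruled-surface data $(\tilde\gamma,\delta_{\boldsymbol\nu})$ and then feed them into the definitions of cylinder and cone from Section~2.1. For part~(1), recall that $S_{\boldsymbol\nu}$ is a cylinder iff $\delta_{\boldsymbol\nu}'\equiv 0$, and by the already-established formula \eqref{eq:betav} we have $\delta_{\boldsymbol\nu}'=\beta_{\boldsymbol\nu}\boldsymbol w$ with $\boldsymbol w=(-\kappa_1\boldsymbol e+\kappa_3\boldsymbol b)/(\kappa_3^2+\kappa_1^2)^{3/2}$. Under the standing assumption $(\kappa_1,\kappa_3)\ne(0,0)$ the vector $\boldsymbol w$ is a well-defined nowhere-zero vector (its numerator has length $\sqrt{\kappa_1^2+\kappa_3^2}\ne 0$), so $\delta_{\boldsymbol\nu}'$ vanishes at a point exactly when $\beta_{\boldsymbol\nu}$ does. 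Hence $\delta_{\boldsymbol\nu}'\equiv 0\iff\beta_{\boldsymbol\nu}\equiv 0$, which is the cylinder criterion, and likewise $\delta_{\boldsymbol\nu}'$ never vanishes iff $\beta_{\boldsymbol\nu}$ never vanishes, which is the non-cylindrical condition. This part is essentially immediate from \eqref{eq:betav}.

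For part~(2), assume $\beta_{\boldsymbol\nu}\ne 0$, so $S_{\boldsymbol\nu}$ is non-cylindrical and the striction curve $\hat\sigma_{\boldsymbol\nu}(t)=S_{\boldsymbol\nu}(t,s(t))$ with $s(t)$ given by \eqref{eq:stnu} is defined. By Definition in Section~2.1, $S_{\boldsymbol\nu}$ is a cone iff the image of $\hat\sigma_{\boldsymbol\nu}$ is a single point, i.e. iff $\hat\sigma_{\boldsymbol\nu}'(t)\equiv 0$. So the core of the argument is to compute $\hat\sigma_{\boldsymbol\nu}'$ explicitly. Writing $\hat\sigma_{\boldsymbol\nu}=\tilde\gamma+s\,\delta_{\boldsymbol\nu}$ and differentiating gives $\hat\sigma_{\boldsymbol\nu}'=\tilde\gamma'+s'\delta_{\boldsymbol\nu}+s\,\delta_{\boldsymbol\nu}'=l\boldsymbol e+s'\delta_{\boldsymbol\nu}+s\beta_{\boldsymbol\nu}\boldsymbol w$, using $\tilde\gamma'=l\boldsymbol e$ and \eqref{eq:betav}. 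By the defining property of the striction curve, $\hat\sigma_{\boldsymbol\nu}'\cdot\delta_{\boldsymbol\nu}'=0$, so $\hat\sigma_{\boldsymbol\nu}'$ is automatically perpendicular to $\boldsymbol w$; thus $\hat\sigma_{\boldsymbol\nu}'$ lies in the plane spanned by $\boldsymbol\nu$ and by the unit vector $\boldsymbol u:=(\kappa_3\boldsymbol e+\kappa_1\boldsymbol b)/\sqrt{\kappa_1^2+\kappa_3^2}=\delta_{\boldsymbol\nu}$ — more precisely, since $\{\boldsymbol u,\boldsymbol\nu,\boldsymbol w/|\boldsymbol w|\}$ is an orthonormal frame, $\hat\sigma_{\boldsymbol\nu}'$ has no $\boldsymbol w$-component and we only need to track its $\boldsymbol e$- and $\boldsymbol b$-components (equivalently its $\delta_{\boldsymbol\nu}$-component, since the $\boldsymbol\nu$-component of $l\boldsymbol e$ is zero and $\delta_{\boldsymbol\nu}',\delta_{\boldsymbol\nu}$ both lie in $\mathrm{span}\{\boldsymbol e,\boldsymbol b\}$). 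Expanding $l\boldsymbol e$ in the $\{\boldsymbol u,\boldsymbol w/|\boldsymbol w|\}$ basis of $\mathrm{span}\{\boldsymbol e,\boldsymbol b\}$ and collecting, one finds $\hat\sigma_{\boldsymbol\nu}'$ is a scalar multiple of $\boldsymbol u=\delta_{\boldsymbol\nu}$, and that scalar, after clearing the common denominators $\sqrt{\kappa_1^2+\kappa_3^2}$ and $\beta_{\boldsymbol\nu}$, is a nonzero multiple of $\rho_{\boldsymbol\nu}=l\big(\beta_{\boldsymbol\nu}(\kappa_2\kappa_3+2\kappa_1')-\beta_{\boldsymbol\nu}'\kappa_1\big)+l'\kappa_1\beta_{\boldsymbol\nu}$. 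Therefore $\hat\sigma_{\boldsymbol\nu}'\equiv 0\iff\rho_{\boldsymbol\nu}\equiv 0$, giving the claimed cone criterion.

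The main obstacle is the bookkeeping in the last step: one must substitute $s(t)=l\kappa_1\sqrt{\kappa_3^2+\kappa_1^2}/\beta_{\boldsymbol\nu}$ into $\hat\sigma_{\boldsymbol\nu}'=l\boldsymbol e+s'\delta_{\boldsymbol\nu}+s\beta_{\boldsymbol\nu}\boldsymbol w$, compute $s'$ by the quotient rule (which is where $\beta_{\boldsymbol\nu}'$, $l'$ and $\kappa_1'$ enter), and then re-express everything in the $\boldsymbol e,\boldsymbol b$ basis using the Frenet-type formulas \eqref{eq:frenet}; the $\boldsymbol e$- and $\boldsymbol b$-coefficients must be shown to be proportional (their ratio equals $\kappa_3:\kappa_1$, consistent with the vanishing of the $\boldsymbol w$-component), and the common factor simplified to $\rho_{\boldsymbol\nu}$ up to the nonzero factor $1/\big(\beta_{\boldsymbol\nu}\sqrt{\kappa_3^2+\kappa_1^2}\big)$ or similar. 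A useful sanity check is that when $l$ is constant and one specializes to a Darboux frame the formula should recover the classical cone condition for the rectifying-type developable. Once the coefficient is identified with $\rho_{\boldsymbol\nu}$, both implications follow at once, since $\beta_{\boldsymbol\nu}\ne 0$ guarantees the denominators are harmless and $\delta_{\boldsymbol\nu}$ is a unit vector so the scalar multiple vanishes iff $\rho_{\boldsymbol\nu}$ does.
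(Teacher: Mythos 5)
Your proposal is correct and takes essentially the same route as the paper: part (1) is read off from \eqref{eq:betav}, and part (2) differentiates the striction curve $\hat\sigma_{\boldsymbol{\nu}}=\tilde\gamma+s\,\delta_{\boldsymbol{\nu}}$ with $s$ as in \eqref{eq:stnu}, computes $s'$, and identifies $\hat\sigma_{\boldsymbol{\nu}}'$ as a nonvanishing multiple of $\delta_{\boldsymbol{\nu}}$ with scalar proportional to $\rho_{\boldsymbol{\nu}}$, so the cone condition (constant striction curve) is $\rho_{\boldsymbol{\nu}}\equiv0$. Your use of the striction relation $\hat\sigma_{\boldsymbol{\nu}}'\cdot\delta_{\boldsymbol{\nu}}'=0$ to see in advance that $\hat\sigma_{\boldsymbol{\nu}}'$ is parallel to $\delta_{\boldsymbol{\nu}}$ is only a minor shortcut over the paper's direct computation of the same formula.
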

\begin{proof}
	We see $(1)$ is obtained from \eqref{eq:betav}.
	We show $(2)$. Differentiating $\hat{\sigma}_{\boldsymbol{\nu}}(t)
	=S_{\boldsymbol{\nu}}(t,s(t))
	=\tilde{\gamma}(t) + s(t) \delta_{\boldsymbol{\nu}}(t)$, we see
	$\hat{\sigma}_{\boldsymbol{\nu}}'(t)=\tilde{\gamma}'
	+ s' \delta_{\boldsymbol{\nu}}
	+ s \delta_{\boldsymbol{\nu}}'$.
	By $\hat{\gamma}'(t)=l\boldsymbol{e}$, 
	\eqref{eq:betav}, \eqref{eq:stnu}
	and 
	$$
	s'(t)=\frac{1}{\beta_{\boldsymbol{\nu}}\sqrt{\kappa_3^2+\kappa_1^2}}
	\Big(
	l\beta_{\boldsymbol{\nu}}\big(2\kappa_1^2\kappa_1'+\kappa_3^2\kappa_1'+\kappa_1\kappa_3\kappa_3'\big)
	+\big(l'\beta_{\boldsymbol{\nu}}
	-l\beta_{\boldsymbol{\nu}}'\big)\kappa_1\big(\kappa_1^2+\kappa_3^2\big)	
	\Big),
	$$
	we have
	$$
	\hat{\sigma}_{\boldsymbol{\nu}}'(t)=
	\frac{\rho_{\boldsymbol{\nu}}}{\beta_{\boldsymbol{\nu}}^2}(\kappa_3\boldsymbol{e}
	+\kappa_1\boldsymbol{b}).
	$$
	Thus we obtain the result under the assumption
	$(\kappa_1,\kappa_3)\ne(0,0)$.
\end{proof}
We remark that the arguments for obtaining invariants $\beta_{\boldsymbol{\nu}}$ and $\rho_{\boldsymbol{\nu}}$ from a moving frame along a curve is based on \cite[Section 3]{izohflat}.
See \cite{folding1,folding2,ishi,istflat,istsw} for other studies of developable surfaces along a curve
on a surface or a frontal.
For cases where $S_{\boldsymbol{\nu}}$ is neither a cylinder nor a cone, we obtain the following results for the singularities of $S_{\boldsymbol{\nu}}$.
\begin{theorem}\label{thm:singsn}
	We assume that 
	$(\kappa_1, \kappa_3) \neq (0, 0)$ and
	$\beta_{\boldsymbol{\nu}} \neq 0$ at $t$.
	Then, the germ of $S_{\boldsymbol{\nu}}$ 
	at $(t,a)$ is a front for any $a$.
	Moreover, the germ $S_{\boldsymbol{\nu}}$ at $(t,s(t))$ 
	is a cuspidal edge if and only if
	$$
	\rho_{\boldsymbol{\nu}}\neq 0.
	$$
	The germ $S_{\boldsymbol{\nu}}$ at $(t,s(t))$ 
	is a swallowtail if and only if
	$$
	\rho_{\boldsymbol{\nu}}= 0,\quad
	\rho_{\boldsymbol{\nu}}'\neq 0.
	$$
\end{theorem}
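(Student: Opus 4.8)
The plan is to apply the criteria of the cited theorem from \cite{krsuy} directly. By Lemma \ref{lem:s2s3frl} we already know $S_{\boldsymbol{\nu}}$ is a frontal with unit normal $\boldsymbol{\nu}$; the first task is to upgrade this to a \emph{front}, i.e. to show $(S_{\boldsymbol{\nu}},\boldsymbol{\nu})$ is an immersion near $(t,a)$. From \eqref{eq:frenet} we have $\boldsymbol{\nu}'=-\kappa_1\boldsymbol{e}+\kappa_3\boldsymbol{b}$, and this is nonzero precisely under the standing hypothesis $(\kappa_1,\kappa_3)\neq(0,0)$; together with $(S_{\boldsymbol{\nu}})_a=\delta_{\boldsymbol{\nu}}$ and the form of $(S_{\boldsymbol{\nu}})_t$ in \eqref{eq:s2ta}, one checks that the $5\times 2$ matrix $\big((S_{\boldsymbol{\nu}})_t,\boldsymbol{\nu}_t;(S_{\boldsymbol{\nu}})_a,\boldsymbol{\nu}_a\big)$ has rank $2$ everywhere, which gives the front property. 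This is routine once one notes $\boldsymbol{\nu}_a=0$ so the normal-derivative part alone already detects the $t$-direction.

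Next I would set up the singularity identifier and the null vector field. Using $\boldsymbol{\nu}$ as the normal, a singularity identifier is $\lambda=\det\big((S_{\boldsymbol{\nu}})_t,(S_{\boldsymbol{\nu}})_a,\boldsymbol{\nu}\big)$. Since $(S_{\boldsymbol{\nu}})_a=\delta_{\boldsymbol{\nu}}=(\kappa_3\boldsymbol{e}+\kappa_1\boldsymbol{b})/\sqrt{\kappa_3^2+\kappa_1^2}$ lies in the $\boldsymbol{e}$–$\boldsymbol{b}$ plane and $\boldsymbol{\nu}$ is the remaining frame vector, only the $\boldsymbol{e}$ and $\boldsymbol{b}$ components of $(S_{\boldsymbol{\nu}})_t$ in \eqref{eq:s2ta} contribute; a short computation shows $\lambda$ is a nonzero multiple of the $\boldsymbol{e}$-coefficient of $(S_{\boldsymbol{\nu}})_t$ minus the appropriate combination, and in fact reduces to $\lambda(t,a)=\big(l\kappa_1\sqrt{\kappa_3^2+\kappa_1^2}-a\,\beta_{\boldsymbol{\nu}}\big)/\sqrt{\kappa_3^2+\kappa_1^2}$ up to a nonvanishing factor — consistent with the description $S(S_{\boldsymbol{\nu}})=\{a=s(t)\}$ and \eqref{eq:stnu}. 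For the null vector field: on the singular set $\ker dS_{\boldsymbol{\nu}}$ is one-dimensional, and since $(S_{\boldsymbol{\nu}})_a=\delta_{\boldsymbol{\nu}}\neq 0$ always, the kernel direction is transverse to $\partial_a$; writing $\eta=\partial_t+c(t,a)\partial_a$ and solving $dS_{\boldsymbol{\nu}}(\eta)=0$ along $a=s(t)$ determines $c$, and one finds $\eta$ is (a multiple of) $\partial_t + s'(t)\partial_a$ restricted to the singular curve — equivalently $\eta$ is the velocity of the parametrization $t\mapsto(t,s(t))$, so that $dS_{\boldsymbol{\nu}}(\eta)=\hat{\sigma}_{\boldsymbol{\nu}}'(t)=\dfrac{\rho_{\boldsymbol{\nu}}}{\beta_{\boldsymbol{\nu}}^2}(\kappa_3\boldsymbol{e}+\kappa_1\boldsymbol{b})$ by the computation in the proof of Theorem \ref{thm:s3cylcone}. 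This last identity is the key link: it shows $\eta\in\ker dS_{\boldsymbol{\nu}}$ exactly where $\rho_{\boldsymbol{\nu}}=0$, but on $S(S_{\boldsymbol{\nu}})$ we need $\eta$ to \emph{be} the kernel, which forces the normalization of $\eta$ to absorb the factor $\rho_{\boldsymbol{\nu}}/\beta_{\boldsymbol{\nu}}^2$ — so care is needed about whether $\eta$ itself is well-defined and nonvanishing at points where $\rho_{\boldsymbol{\nu}}=0$.

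Then I would compute $\eta\lambda$ and $\eta\eta\lambda$ on the singular set. Since $\lambda$ vanishes on $\{a=s(t)\}$ and $\eta$ is tangent to a transversal of this curve (its $\partial_a$-part is $s'$ while the curve has slope $s'$ — wait, they agree, so one must instead take $\eta$ with $\partial_a$-component \emph{different} from $s'$; the correct null field has $dS_{\boldsymbol{\nu}}(\partial_t+c\,\partial_a)=0$ with $c\neq s'$ generically). The cleaner route: $\eta\lambda = \eta\big(l\kappa_1\sqrt{\kappa_3^2+\kappa_1^2}-a\beta_{\boldsymbol{\nu}}\big)/\sqrt{\cdots}$; using $\eta=\partial_t+c\partial_a$ and $\partial_a$ of the bracket is $-\beta_{\boldsymbol{\nu}}$, one gets $\eta\lambda|_{a=s} = \big(\tfrac{d}{dt}(l\kappa_1\sqrt{\kappa_3^2+\kappa_1^2}) - s(t)\beta_{\boldsymbol{\nu}}' - c\beta_{\boldsymbol{\nu}}\big)/\sqrt{\cdots}$, and substituting $s(t)=l\kappa_1\sqrt{\kappa_3^2+\kappa_1^2}/\beta_{\boldsymbol{\nu}}$ and simplifying should collapse the numerator to a nonzero multiple of $\rho_{\boldsymbol{\nu}}$ (the same combination $l(\beta_{\boldsymbol{\nu}}(\kappa_2\kappa_3+2\kappa_1')-\beta_{\boldsymbol{\nu}}'\kappa_1)+l'\kappa_1\beta_{\boldsymbol{\nu}}$ that appears in $\rho_{\boldsymbol{\nu}}$), giving the cuspidal-edge criterion $\eta\lambda\neq 0 \iff \rho_{\boldsymbol{\nu}}\neq 0$. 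For the swallowtail case, when $\rho_{\boldsymbol{\nu}}=0$ at $t$ one first checks $d\lambda\neq 0$ there — this follows because $\partial_a\lambda=-\beta_{\boldsymbol{\nu}}/\sqrt{\cdots}\neq 0$ by hypothesis, so $\lambda$ is automatically non-degenerate and $S(S_{\boldsymbol{\nu}})$ is a regular curve — and then $\eta\eta\lambda\neq 0$ must be shown equivalent to $\rho_{\boldsymbol{\nu}}'\neq 0$; differentiating the expression $\eta\lambda = (\text{nonzero})\cdot\rho_{\boldsymbol{\nu}}$ along $\eta$ and noting that at a point where $\rho_{\boldsymbol{\nu}}=0$ only the derivative of the $\rho_{\boldsymbol{\nu}}$-factor survives, one gets $\eta\eta\lambda = (\text{nonzero})\cdot\rho_{\boldsymbol{\nu}}' + (\partial_a$-contributions that vanish on $S(S_{\boldsymbol{\nu}})$ when $\rho_{\boldsymbol{\nu}}=0)$, yielding the claim.

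\textbf{The main obstacle} I anticipate is bookkeeping in the derivation $\eta\lambda = c_0(t)\,\rho_{\boldsymbol{\nu}}$ with $c_0$ nonvanishing: one has to be careful that the null vector field $\eta$ is a genuine smooth nonvanishing vector field near the singular point (not just defined on the singular set), that it is not proportional to $\partial_t$ alone in a way that trivializes the computation, and that the cancellation producing exactly $\rho_{\boldsymbol{\nu}}$ — rather than $\rho_{\boldsymbol{\nu}}$ plus extra terms — actually happens; this is the same algebraic identity already implicit in $\hat{\sigma}_{\boldsymbol{\nu}}' = (\rho_{\boldsymbol{\nu}}/\beta_{\boldsymbol{\nu}}^2)(\kappa_3\boldsymbol{e}+\kappa_1\boldsymbol{b})$ from Theorem \ref{thm:s3cylcone}, so I would lean on that formula to shortcut the verification rather than re-expanding $s'(t)$ from scratch. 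Everything else is a direct application of the criteria theorem.
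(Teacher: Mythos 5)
Your overall strategy is the same as the paper's: show $(S_{\boldsymbol{\nu}},\boldsymbol{\nu})$ is an immersion, take $\lambda=\det\big((S_{\boldsymbol{\nu}})_t,(S_{\boldsymbol{\nu}})_a,\boldsymbol{\nu}\big)$ as the singularity identifier, and apply the criteria of \cite{krsuy}. The front part and your formula for $\lambda$ are fine (your $\lambda$ agrees with the paper's up to a nonvanishing factor, and $\partial_a\lambda$ being a nonzero multiple of $\beta_{\boldsymbol{\nu}}$ gives non-degeneracy for free). The genuine gap is the null vector field. Your first identification, $\eta=\partial_t+s'(t)\partial_a$ with $dS_{\boldsymbol{\nu}}(\eta)=\hat\sigma_{\boldsymbol{\nu}}'$, cannot be right: a null field must satisfy $dS_{\boldsymbol{\nu}}(\eta)=0$ at \emph{every} singular point, whereas $\hat\sigma_{\boldsymbol{\nu}}'=(\rho_{\boldsymbol{\nu}}/\beta_{\boldsymbol{\nu}}^2)(\kappa_3\boldsymbol{e}+\kappa_1\boldsymbol{b})$ is nonzero exactly at the cuspidal-edge points you want to detect; $\partial_t+s'\partial_a$ is the tangent to the singular curve, which lies in the kernel only where $\rho_{\boldsymbol{\nu}}=0$. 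You notice this mid-argument but then leave the coefficient $c$ in $\eta=\partial_t+c\,\partial_a$ undetermined, so the decisive computation --- that $\eta\lambda|_{a=s(t)}$ collapses to a nonzero multiple of $\rho_{\boldsymbol{\nu}}$ --- is never actually carried out, and for an arbitrary $c$ it is false. The missing input is cheap to obtain: since $\delta_{\boldsymbol{\nu}}'\perp\delta_{\boldsymbol{\nu}}$, one has $(S_{\boldsymbol{\nu}})_t=l\boldsymbol{e}+a\beta_{\boldsymbol{\nu}}\boldsymbol{w}$ by \eqref{eq:betav}, hence $(S_{\boldsymbol{\nu}})_t\cdot\delta_{\boldsymbol{\nu}}=l\kappa_3/\sqrt{\kappa_1^2+\kappa_3^2}$ at every point; on the singular set $(S_{\boldsymbol{\nu}})_t$ is parallel to $\delta_{\boldsymbol{\nu}}=(S_{\boldsymbol{\nu}})_a$, so the kernel is spanned by $\eta=\partial_t-\big(l\kappa_3/\sqrt{\kappa_1^2+\kappa_3^2}\big)\partial_a$. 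This is the field the paper uses, and only with this value of $c$ does your expression for $\eta\lambda|_{a=s}$ reduce to $\pm\rho_{\boldsymbol{\nu}}/\big(\beta_{\boldsymbol{\nu}}\sqrt{\kappa_1^2+\kappa_3^2}\big)$.

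The swallowtail step has a second, related gap. The identity $\eta\lambda=c_0\rho_{\boldsymbol{\nu}}$ is established only on the curve $\{a=s(t)\}$, while $\eta\eta\lambda$ requires differentiating the two-variable function $\eta\lambda$ in the $\eta$-direction, which is generically transverse to that curve; differentiating the restriction is legitimate only because at a point where $\rho_{\boldsymbol{\nu}}=0$ one has $\hat\sigma_{\boldsymbol{\nu}}'=0$, so there $s'(t)=-l\kappa_3/\sqrt{\kappa_1^2+\kappa_3^2}$ and $\eta$ is tangent to the singular curve. If you add that observation, your shortcut does give $\eta\eta\lambda=(\text{nonzero})\cdot\rho_{\boldsymbol{\nu}}'$ and is in fact tidier than the paper, which instead computes $\eta\eta\lambda$ explicitly, substituting $\rho_{\boldsymbol{\nu}}=0$ via \eqref{eq:lpnu} and treating the cases $\kappa_1\neq0$ and $\kappa_1=0$ separately. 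As written, however, the appeal to vanishing ``$\partial_a$-contributions'' is not an argument, and together with the unresolved $c$ the proposal is incomplete at its central step.
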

\begin{proof}
	By Lemma \ref{lem:s2s3frl}, 
	$S_{\boldsymbol{\nu}}$ is a frontal with a unit normal vector
	$\boldsymbol{\nu}$. 
	Then noticing $(\kappa_1, \kappa_3) \neq (0, 0)$,
	we have 
	$$
	\rm rank
	\begin{pmatrix}
		(S_{\boldsymbol{\nu}})_t & \boldsymbol{\nu}_t \\
		(S_{\boldsymbol{\nu}})_a & \boldsymbol{\nu}_a 
	\end{pmatrix}
	= \rm rank
	\begin{pmatrix}
		(S_{\boldsymbol{\nu}})_t & -\kappa_1\boldsymbol{e} + \kappa_3\boldsymbol{b} \\
		\dfrac{\kappa_3\boldsymbol{e}+\kappa_1\boldsymbol{b}}{\kappa_3^2+\kappa_1^2} & 0
	\end{pmatrix}
	= 2
	$$
	which shows that $(S_{\boldsymbol{\nu}}, \boldsymbol{\nu})$ is an immersion.
	Therefore, $S_{\boldsymbol{\nu}}$ is a front. 
	From \eqref{eq:stnu} we can calculate $s(t)$,
	we know that $(t,s(t))$ is a singular point of $S_{\boldsymbol{\nu}}$.
	Then by \eqref{eq:s2ta}, the rank of
	$dS_{\boldsymbol{\nu}}|_{(t,s(t))}$ is one.
	By a direct calculation,
	the null vector field $\eta_{\boldsymbol{\nu}}$ 
	and the singularity identifier $\lambda_{\boldsymbol{\nu}}$ 
	for $S_{\boldsymbol{\nu}}$ are given as follows.
	$$
	\begin{cases}
		\eta_{\boldsymbol{\nu}}=
		\partial_t - \dfrac{l\kappa_3}{\sqrt{\kappa_3^2+\kappa_1^2}}\partial_a, \\
		\lambda_{\boldsymbol{\nu}}(t,a)=
		\det\big((S_{\boldsymbol{\nu}})_t, (S_{\boldsymbol{\nu}})_a, \boldsymbol{\nu}\big)=
		-\dfrac{l\kappa_1}{\sqrt{\kappa_3^2+\kappa_1^2}}
		+a\dfrac{\beta_{\boldsymbol{\nu}}}{\kappa_3^2+\kappa_1^2}.
	\end{cases}
	$$
	Calculating $\eta_{\boldsymbol{\nu}} \lambda_{\boldsymbol{\nu}}$
	and substituting $a=s(t)$,
	we obtain
	$$
	\eta_{\boldsymbol{\nu}} \lambda_{\boldsymbol{\nu}}|_{(t,s(t))}
	=
	-\dfrac{\rho_{\boldsymbol{\nu}}}
	{\beta_{\boldsymbol{\nu}}\sqrt{\kappa_3^2+\kappa_1^2}}.
	$$
	Then we have the assertion for the case of cuspidal edge.
	
	If $\kappa_1\ne0$, then $\rho_{\boldsymbol{\nu}}=0$ is equivalent to
	\begin{equation}\label{eq:lpnu}
		l'=
		-\dfrac{\beta_{\boldsymbol{\nu}}(\kappa_2\kappa_3+2\kappa_1')-\beta_{\boldsymbol{\nu}}'\kappa_1}{\kappa_1\beta_{\boldsymbol{\nu}}}l.
	\end{equation}
	Calculating $\eta_{\boldsymbol{\nu}}\eta_{\boldsymbol{\nu}}
	\lambda_{\boldsymbol{\nu}}$
	and substituting $a=s(t)$ and \eqref{eq:lpnu},
	we obtain
	$$
	\eta_{\boldsymbol{\nu}}\eta_{\boldsymbol{\nu}}
	\lambda_{\boldsymbol{\nu}}|_{(t,s(t))}
	=
	-\dfrac{\rho_{\boldsymbol{\nu}}'}{\beta_{\boldsymbol{\nu}}\sqrt{\kappa_3^2+\kappa_1^2}}
	$$
	under the condition \eqref{eq:lpnu}.
	Thus, we obtained the assertion that $S_{\boldsymbol{\nu}}$ is a swallowtail in the case of $\kappa_1\ne0$.
	If $\kappa_1=0$, then noticing
	$\beta_{\boldsymbol{\nu}}\kappa_3\ne0$, the condition
	$\rho_{\boldsymbol{\nu}}=0$ is equivalent to
	$$
	l(\kappa_2 \kappa_3 + 2\kappa_1')=0.
	$$
	Firstly, we consider the case of $l(t)=0$, 
	then we have
	$$
	\rho_{\boldsymbol{\nu}}'=
	\kappa_3 (\kappa_2 \kappa_3 + \kappa_1') (\kappa_2 \kappa_3 + 
	3 \kappa_1') l'
	=
	\beta_{\boldsymbol{\nu}} (\kappa_2 \kappa_3 + 
	3 \kappa_1') l'\quad
	\text{and}\quad
	\eta_{\boldsymbol{\nu}}\eta_{\boldsymbol{\nu}}\lambda_{\boldsymbol{\nu}}=
	-\dfrac{(\kappa_2 \kappa_3 + 3 \kappa_1') l'}{|\kappa_3|}.
	$$
	Secondly, we consider the case of $l(t)\ne0$.
	Then by $\rho_{\boldsymbol{\nu}}=0$, we have
	$\kappa_2 \kappa_3 + 2\kappa_1'=0$.
	If $\kappa_2=0$, then $\beta_{\boldsymbol{\nu}}=0$ holds.
	So we may assume that $\kappa_2\ne0$.
	We have
	$$
	\rho_{\boldsymbol{\nu}}'=
	\frac{\kappa_2\kappa_3^2}{4}\Big(\kappa_3 (4 l \kappa_2' - \kappa_2 l') + 
	6 l (\kappa_2 \kappa_3' + \kappa_1'')\Big),
	$$
	$$
	\eta_{\boldsymbol{\nu}}\eta_{\boldsymbol{\nu}}\lambda_{\boldsymbol{\nu}}
	=-\dfrac{1}{2|\kappa_3|}\Big(\kappa_3 (4 l \kappa_2' - \kappa_2 l') + 
	6 l (\kappa_2 \kappa_3' + \kappa_1'')\Big).
	$$
	Thus, we obtained the assertion that $S_{\boldsymbol{\nu}}$ is a swallowtail in the case of $\kappa_1=0$.
\end{proof}
Since we are interested in the case that $\hat{\gamma}(t)$ has a singular
point, we state the theorem in the case of $l(t)=0$.
In this case, $s(t)=0$.
\begin{corollary}
	Under the same assumption as in Theorem {\rm \ref{thm:singsn}},
	if $l(t)=0$, the following hold.
	The germ $S_{\boldsymbol{\nu}}$ at $(t,0)$ 
	is a cuspidal edge if and only if
	$$
	l'\kappa_1\ne0.
	$$
	The germ $S_{\boldsymbol{\nu}}$ at $(t,0)$ 
	is a swallowtail if and only if
	$$
	l'=0,\ \kappa_1l''\ne0\ \text{or}\
	\kappa_1=0,\  
	l'(\kappa_2\kappa_3+3\kappa_1')\ne0.
	$$
\end{corollary}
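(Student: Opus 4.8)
The plan is to deduce the corollary directly from Theorem~\ref{thm:singsn} by specializing all of its conclusions to the case $l(t)=0$. First I would note that when $l(t)=0$ the striction parameter in \eqref{eq:stnu} satisfies $s(t)=0$, so the singular point under consideration is exactly $(t,0)$; moreover the standing hypotheses of the corollary, $(\kappa_1,\kappa_3)\neq(0,0)$ and $\beta_{\boldsymbol{\nu}}\neq0$ at $t$, are precisely those of Theorem~\ref{thm:singsn}, so $S_{\boldsymbol{\nu}}$ is a front near $(t,0)$ with $\rank dS_{\boldsymbol{\nu}}=1$ there, and the cuspidal-edge and swallowtail criteria of that theorem apply verbatim. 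Thus it only remains to rewrite the conditions ``$\rho_{\boldsymbol{\nu}}\neq0$'' and ``$\rho_{\boldsymbol{\nu}}=0$ and $\rho_{\boldsymbol{\nu}}'\neq0$'' in the special case $l(t)=0$.

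The computational step is to evaluate $\rho_{\boldsymbol{\nu}}$ and $\rho_{\boldsymbol{\nu}}'$ along $l=0$. Writing $A=\beta_{\boldsymbol{\nu}}(\kappa_2\kappa_3+2\kappa_1')-\beta_{\boldsymbol{\nu}}'\kappa_1$ and $B=\kappa_1\beta_{\boldsymbol{\nu}}$, so that $\rho_{\boldsymbol{\nu}}=lA+l'B$, putting $l=0$ gives $\rho_{\boldsymbol{\nu}}=l'\kappa_1\beta_{\boldsymbol{\nu}}$. Differentiating, $\rho_{\boldsymbol{\nu}}'=l'A+lA'+l''B+l'B'$, and setting $l=0$ again kills the $lA'$ term, leaving $\rho_{\boldsymbol{\nu}}'=l'(A+B')+l''B$; a short simplification gives $A+B'=\beta_{\boldsymbol{\nu}}(\kappa_2\kappa_3+3\kappa_1')$, so that
$$
\rho_{\boldsymbol{\nu}}\big|_{l=0}=l'\kappa_1\beta_{\boldsymbol{\nu}},\qquad
\rho_{\boldsymbol{\nu}}'\big|_{l=0}=\beta_{\boldsymbol{\nu}}\big(l'(\kappa_2\kappa_3+3\kappa_1')+l''\kappa_1\big).
$$

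From these the cuspidal-edge claim is immediate: since $\beta_{\boldsymbol{\nu}}\neq0$, the condition $\rho_{\boldsymbol{\nu}}\neq0$ is equivalent to $l'\kappa_1\neq0$. For the swallowtail, $\rho_{\boldsymbol{\nu}}=0$ forces $l'\kappa_1=0$, so I would split into the cases $l'=0$ and $\kappa_1=0$. If $l'=0$, the formula above reduces $\rho_{\boldsymbol{\nu}}'$ to $l''\kappa_1\beta_{\boldsymbol{\nu}}$, hence $\rho_{\boldsymbol{\nu}}'\neq0$ is equivalent to $\kappa_1l''\neq0$; if $\kappa_1=0$, it reduces to $l'(\kappa_2\kappa_3+3\kappa_1')\beta_{\boldsymbol{\nu}}$, hence $\rho_{\boldsymbol{\nu}}'\neq0$ is equivalent to $l'(\kappa_2\kappa_3+3\kappa_1')\neq0$, and the disjunction of these two is exactly the stated criterion. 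I would also remark that the overlap $l'=\kappa_1=0$ causes no ambiguity: there $\rho_{\boldsymbol{\nu}}'|_{l=0}=0$, and correspondingly neither alternative in the statement holds.

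There is no real obstacle: the corollary is essentially a substitution into Theorem~\ref{thm:singsn}. The only things needing care are checking $s(t)=0$ when $l(t)=0$ (so the theorem is invoked at the correct point), and the bookkeeping of the two sub-cases $\kappa_1\neq0$ and $\kappa_1=0$, which in the proof of Theorem~\ref{thm:singsn} were treated by separate and more involved computations of $\eta_{\boldsymbol{\nu}}\eta_{\boldsymbol{\nu}}\lambda_{\boldsymbol{\nu}}$, but which here are uniformly captured by the single evaluated expression for $\rho_{\boldsymbol{\nu}}'|_{l=0}$.
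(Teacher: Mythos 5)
Your proposal is correct and matches the paper's (implicit) argument: the corollary is obtained exactly by noting $s(t)=0$ when $l(t)=0$ and specializing the conditions $\rho_{\boldsymbol{\nu}}\neq0$, respectively $\rho_{\boldsymbol{\nu}}=0$, $\rho_{\boldsymbol{\nu}}'\neq0$, of Theorem \ref{thm:singsn}, and your evaluations $\rho_{\boldsymbol{\nu}}|_{l=0}=l'\kappa_1\beta_{\boldsymbol{\nu}}$ and $\rho_{\boldsymbol{\nu}}'|_{l=0}=\beta_{\boldsymbol{\nu}}\bigl(l'(\kappa_2\kappa_3+3\kappa_1')+l''\kappa_1\bigr)$ agree with the expressions appearing in the paper's proof of that theorem.
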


\subsection{Properties of the Surface $S_{\boldsymbol{b}}$}
In this subsection, we assume $(\kappa_2,\kappa_3)\neq(0,0)$ 
for any $t\in I$.
By a direct calculation, we have
\begin{equation}\label{eq:deltab}
	\delta_{\boldsymbol{b}}'(t)=\beta_{\boldsymbol{b}}\boldsymbol{w},\quad
	\left(
	\beta_{\boldsymbol{b}}(t)=\kappa_1\kappa_2^2
	+ \kappa_1\kappa_3^2 + \kappa_2\kappa_3' 
	- \kappa_2'\kappa_3,\quad
	\boldsymbol{w}(t)=
	\frac{\kappa_2\boldsymbol{e}+\kappa_3\boldsymbol{\nu}}{(\kappa_3^2+\kappa_2^2)^\frac{3}{2}}
	\right).
\end{equation}
If the developable surface $S_{\boldsymbol{b}}$ is non-cylindrical, then setting
\begin{equation}\label{eq:stb}
	s(t) 
	= -\frac{l\kappa_2\sqrt{\kappa_3^2+\kappa_2^2}}
	{\beta_{\boldsymbol{b}}},
\end{equation}
striction curve is obtained by $\hat{\sigma}_{\boldsymbol{b}}(t) = S_{\boldsymbol{b}}(t, s(t))$.
Under the assumption $(\kappa_2,\kappa_3)\neq(0,0)$.
The singular points of $S_{\boldsymbol{b}}$ 
satisfies that
$S(S_{\boldsymbol{b}}) = \{(t, a) \mid a = s(t)\}$.
We have the follows.
\begin{theorem}\label{thm:s2cylcone}
	$(1).$The developable surface $S_{\boldsymbol{b}}$ is a cylinder if and only if
	$$
	\beta_{\boldsymbol{b}}\equiv 0,
	$$
	where $\equiv$ stands for the equality holds identically.
	Similarly, $S_{\boldsymbol{b}}$ is non-cylindrical if 
	$\beta_{\boldsymbol{b}}$ never vanishes on $I$.
	
	$(2).$The developable surface $S_{\boldsymbol{b}}$ is a {\it cone} if and only if
	$\beta_{\boldsymbol{b}} \neq 0$ and $\rho_{\boldsymbol{b}}\equiv0$, where
	$$
	\rho_{\boldsymbol{b}}(t)
	=
	l\big(\beta_{\boldsymbol{b}}(\kappa_1\kappa_3-2\kappa_2')+\beta_{\boldsymbol{b}}'\kappa_2\big)-l'\kappa_2\beta_{\boldsymbol{b}}.
	$$
\end{theorem}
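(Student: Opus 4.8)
The plan is to follow the route used for Theorem~\ref{thm:s3cylcone}, with the roles of $\kappa_1$ and $\kappa_2$ interchanged and with the sign adjustments forced by the minus signs in $\delta_{\boldsymbol{b}}$ and in the formula \eqref{eq:stb} for $s(t)$. For part $(1)$ I would argue straight from \eqref{eq:deltab}: by definition $S_{\boldsymbol{b}}$ is a cylinder exactly when $\delta_{\boldsymbol{b}}'\equiv 0$, and since $\delta_{\boldsymbol{b}}'=\beta_{\boldsymbol{b}}\boldsymbol{w}$ with $\boldsymbol{w}=(\kappa_2\boldsymbol{e}+\kappa_3\boldsymbol{\nu})/(\kappa_3^2+\kappa_2^2)^{3/2}$ nowhere zero by the standing assumption $(\kappa_2,\kappa_3)\neq(0,0)$, this holds if and only if $\beta_{\boldsymbol{b}}\equiv 0$; likewise $\delta_{\boldsymbol{b}}'$ is nowhere zero, i.e.\ $S_{\boldsymbol{b}}$ is non-cylindrical, precisely when $\beta_{\boldsymbol{b}}$ is nowhere zero.

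For part $(2)$ I would first recall that, by the definition of a cone, a non-cylindrical $S_{\boldsymbol{b}}$ is a cone if and only if the image of its striction curve $\hat{\sigma}_{\boldsymbol{b}}(t)=S_{\boldsymbol{b}}(t,s(t))=\tilde{\gamma}(t)+s(t)\delta_{\boldsymbol{b}}(t)$, with $s(t)$ as in \eqref{eq:stb}, is a single point; as $\hat{\sigma}_{\boldsymbol{b}}$ is smooth on the interval $I$, this is equivalent to $\hat{\sigma}_{\boldsymbol{b}}'\equiv 0$. Next I would differentiate,
\[
\hat{\sigma}_{\boldsymbol{b}}'=\tilde{\gamma}'+s'\delta_{\boldsymbol{b}}+s\,\delta_{\boldsymbol{b}}'=l\,\boldsymbol{e}+s'\delta_{\boldsymbol{b}}+s\,\beta_{\boldsymbol{b}}\boldsymbol{w},
\]
compute $s'(t)$ from \eqref{eq:stb} (which brings in $\beta_{\boldsymbol{b}}'$, $l'$, $\kappa_2'$ and $\kappa_3'$), substitute, and rewrite everything in the basis $\{\boldsymbol{e},\boldsymbol{\nu},\boldsymbol{b}\}$ via \eqref{eq:frenet}. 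Since $\boldsymbol{e}$, $\delta_{\boldsymbol{b}}$ and $\boldsymbol{w}$ all lie in the plane spanned by $\boldsymbol{e}$ and $\boldsymbol{\nu}$, the $\boldsymbol{b}$-component of $\hat{\sigma}_{\boldsymbol{b}}'$ vanishes automatically, and the remaining two components should amalgamate into
\[
\hat{\sigma}_{\boldsymbol{b}}'(t)=\frac{\rho_{\boldsymbol{b}}}{\beta_{\boldsymbol{b}}^{2}}\,(\kappa_3\boldsymbol{e}-\kappa_2\boldsymbol{\nu}),
\]
with $\rho_{\boldsymbol{b}}$ the stated polynomial. Because $(\kappa_2,\kappa_3)\neq(0,0)$ forces $\kappa_3\boldsymbol{e}-\kappa_2\boldsymbol{\nu}\neq\boldsymbol{0}$, this yields $\hat{\sigma}_{\boldsymbol{b}}'\equiv 0$ if and only if $\rho_{\boldsymbol{b}}\equiv 0$, which is the claim.

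The main obstacle is entirely computational: verifying that the three terms $l\,\boldsymbol{e}$, $s'\delta_{\boldsymbol{b}}$ and $s\,\beta_{\boldsymbol{b}}\boldsymbol{w}$ really do collapse to a single multiple of $\kappa_3\boldsymbol{e}-\kappa_2\boldsymbol{\nu}$ and that the coefficient is exactly $\rho_{\boldsymbol{b}}/\beta_{\boldsymbol{b}}^{2}$. The bookkeeping with the radicals $\sqrt{\kappa_3^2+\kappa_2^2}$, together with the several sign changes relative to the $S_{\boldsymbol{\nu}}$ computation, is where slips are most likely. As a cross-check — and, in fact, as an alternative, computation-free proof — I would note that $\{\boldsymbol{e},\boldsymbol{b},-\boldsymbol{\nu}\}$ is again a frame of the kind considered in Section~\ref{sec:frame} along the same curve with the same length function $l$, and that its invariants are $(\kappa_2,-\kappa_1,\kappa_3)$; under this relabelling $S_{\boldsymbol{b}}$ becomes the surface ``$S_{\boldsymbol{\nu}}$'' of the new frame, $\beta_{\boldsymbol{\nu}}$ becomes $-\beta_{\boldsymbol{b}}$, the function $s$ of \eqref{eq:stnu} becomes the one of \eqref{eq:stb}, and $\rho_{\boldsymbol{\nu}}$ becomes precisely $\rho_{\boldsymbol{b}}$, so the present theorem follows at once from Theorem~\ref{thm:s3cylcone}.
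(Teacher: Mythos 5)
Your main argument is correct and is essentially the paper's own proof: part (1) read off from \eqref{eq:deltab}, and part (2) by differentiating the striction curve, computing $s'$ from \eqref{eq:stb}, and collapsing $\hat{\sigma}_{\boldsymbol{b}}'$ to $\frac{\rho_{\boldsymbol{b}}}{\beta_{\boldsymbol{b}}^{2}}(\kappa_3\boldsymbol{e}-\kappa_2\boldsymbol{\nu})$, which vanishes identically iff $\rho_{\boldsymbol{b}}\equiv0$ since $(\kappa_2,\kappa_3)\neq(0,0)$. Your closing ``cross-check'' is a genuine alternative the paper does not use, and it checks out: $\{\boldsymbol{e},\boldsymbol{b},-\boldsymbol{\nu}\}$ is an adapted frame of the same kind (note $\boldsymbol{e}\times\boldsymbol{b}=-\boldsymbol{\nu}$) with invariants $(\kappa_2,-\kappa_1,\kappa_3)$, its $\delta_{\boldsymbol{\nu}}$ equals $\delta_{\boldsymbol{b}}$, and substitution gives $\beta\mapsto-\beta_{\boldsymbol{b}}$, the $s$ of \eqref{eq:stnu} turning into \eqref{eq:stb}, and $\rho\mapsto\rho_{\boldsymbol{b}}$ exactly, so Theorem \ref{thm:s2cylcone} does follow formally from Theorem \ref{thm:s3cylcone}; this relabelling route avoids redoing the radical-and-sign bookkeeping entirely, at the cost of verifying the frame change once, and it would equally transfer Theorem \ref{thm:singsn} to the $S_{\boldsymbol{b}}$ case.
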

\begin{proof}
	We see $(1)$ is obtained from \eqref{eq:deltab}.
	We show $(2)$. Differentiating $\hat{\sigma}_{\boldsymbol{b}}(t)
	=S_{\boldsymbol{b}}(t,s(t))=\tilde{\gamma}(t) + s(t)\delta_{\boldsymbol{b}}(t)$, we see
	$\hat{\sigma}_{\boldsymbol{b}}'(t)=\tilde{\gamma}'
	+ s'\delta_{\boldsymbol{b}}
	+ s\delta_{\boldsymbol{b}}'$.
	By $\hat{\gamma}'(t)=l\boldsymbol{e}$, \eqref{eq:deltab}, \eqref{eq:stb} and 
	$$
	s'(t)=-\frac{1}{\beta_{\boldsymbol{\nu}}\sqrt{\kappa_3^2+\kappa_2^2}}
	\Big(
	l\beta_{\boldsymbol{b}}\big(2\kappa_2^2\kappa_2'+\kappa_3^2\kappa_2'+\kappa_2\kappa_3\kappa_3'\big)
	+\big(l'\beta_{\boldsymbol{b}}
	-l\beta_{\boldsymbol{b}}'\big)\kappa_2\big(\kappa_2^2+\kappa_3^2\big)	
	\Big),
	$$
	we have
	$$
	\hat{\sigma}_{\boldsymbol{b}}'(t)=
	\frac{\rho_{\boldsymbol{b}}}{\beta_{\boldsymbol{b}}^2}(\kappa_3\boldsymbol{e}
	-\kappa_2\boldsymbol{\nu}).
	$$
	Thus we obtain the result under the assumption
	$(\kappa_2,\kappa_3)\ne(0,0)$.
\end{proof}
We remark that 
the arguments for obtaining invariants $\beta_{\boldsymbol{b}}$
and $\rho_{\boldsymbol{b}}$ 
from a moving frame along a curve
is based on \cite[Section 3]{izohflat}.
See \cite{folding1,folding2,ishi,istflat,istsw} for other studies of developable surfaces along a curve
on a surface or a frontal.
For cases where $S_{\boldsymbol{b}}$ is neither a cylinder nor a cone, 
we obtain the following results for the singularities of $S_{\boldsymbol{b}}$.
\begin{theorem}\label{thm:sbceswa}
	We assume that $(\kappa_2, \kappa_3)\neq0$, 
	$\beta_{\boldsymbol{b}}\neq0$ at $t$.
	Then, the germ of $S_{\boldsymbol{b}}$ 
	at $(t,a)$ is a front at any $a$.
	Moreover, the germ $S_{\boldsymbol{b}}$ at $(t,s(t))$ 
	is a cuspidal edge if and only if
	$$
	\rho_{\boldsymbol{b}}\neq 0.
	$$
	The germ $S_{\boldsymbol{b}}$ at $(t,s(t))$ 
	is a swallowtail if and only if
	$$
	\rho_{\boldsymbol{b}}= 0,\quad
	\rho_{\boldsymbol{b}}'\neq 0.
	$$
\end{theorem}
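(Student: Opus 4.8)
The plan is to repeat the proof of Theorem~\ref{thm:singsn} with the frame vector $\boldsymbol{\nu}$ and the invariants $\beta_{\boldsymbol{\nu}},\rho_{\boldsymbol{\nu}}$ replaced by $\boldsymbol{b}$ and $\beta_{\boldsymbol{b}},\rho_{\boldsymbol{b}}$, interchanging the roles of $\kappa_1$ and $\kappa_2$. First, by Lemma~\ref{lem:s2s3frl}, $S_{\boldsymbol{b}}$ is a frontal with unit normal $\boldsymbol{b}$, and from \eqref{eq:frenet} one has $\boldsymbol{b}_a=0$ and $\boldsymbol{b}_t=-\kappa_2\boldsymbol{e}-\kappa_3\boldsymbol{\nu}$. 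Since $(S_{\boldsymbol{b}})_a=\delta_{\boldsymbol{b}}=(\kappa_3\boldsymbol{e}-\kappa_2\boldsymbol{\nu})/\sqrt{\kappa_3^2+\kappa_2^2}$ is a unit vector orthogonal to $\kappa_2\boldsymbol{e}+\kappa_3\boldsymbol{\nu}$, and the latter is nonzero by the hypothesis $(\kappa_2,\kappa_3)\ne(0,0)$, the matrix
$$
\begin{pmatrix}
(S_{\boldsymbol{b}})_t & \boldsymbol{b}_t\\
(S_{\boldsymbol{b}})_a & \boldsymbol{b}_a
\end{pmatrix}
=
\begin{pmatrix}
(S_{\boldsymbol{b}})_t & -\kappa_2\boldsymbol{e}-\kappa_3\boldsymbol{\nu}\\
\delta_{\boldsymbol{b}} & 0
\end{pmatrix}
$$
has rank $2$; hence $(S_{\boldsymbol{b}},\boldsymbol{b})$ is an immersion and $S_{\boldsymbol{b}}$ is a front.

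Next, since $\beta_{\boldsymbol{b}}\ne0$ the function $s(t)$ of \eqref{eq:stb} is defined, and by the description of $S(S_{\boldsymbol{b}})$ recalled just before the theorem together with \eqref{eq:s3ta}, the point $(t,s(t))$ is a singular point at which $dS_{\boldsymbol{b}}$ has rank $1$ (its image being spanned by $\delta_{\boldsymbol{b}}$). A direct calculation using \eqref{eq:deltab} then gives, up to a nonzero scalar multiple, a null vector field and a singularity identifier of the form
$$
\eta_{\boldsymbol{b}}=\partial_t-\frac{l\kappa_3}{\sqrt{\kappa_3^2+\kappa_2^2}}\,\partial_a,
\qquad
\lambda_{\boldsymbol{b}}(t,a)=\frac{l\kappa_2}{\sqrt{\kappa_3^2+\kappa_2^2}}+a\,\frac{\beta_{\boldsymbol{b}}}{\kappa_3^2+\kappa_2^2}.
$$
Because $\partial_a\lambda_{\boldsymbol{b}}=\beta_{\boldsymbol{b}}/(\kappa_3^2+\kappa_2^2)\ne0$, every point of $S(S_{\boldsymbol{b}})=\{a=s(t)\}$ is non-degenerate and $d\lambda_{\boldsymbol{b}}\ne0$ there, so the third condition in the swallowtail criterion of \cite{krsuy} is automatic. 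Computing $\eta_{\boldsymbol{b}}\lambda_{\boldsymbol{b}}$ and substituting $a=s(t)$ should give $\eta_{\boldsymbol{b}}\lambda_{\boldsymbol{b}}|_{(t,s(t))}$ equal to a nonzero multiple of $\rho_{\boldsymbol{b}}$ (more precisely, to $-\rho_{\boldsymbol{b}}/(\beta_{\boldsymbol{b}}\sqrt{\kappa_3^2+\kappa_2^2})$ up to sign, matching the shape of $\hat{\sigma}_{\boldsymbol{b}}'$ found in the proof of Theorem~\ref{thm:s2cylcone}), whence the cuspidal-edge part follows from the criterion.

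For the swallowtail part, assume $\rho_{\boldsymbol{b}}=0$. If $\kappa_2\ne0$ at $t$, then $\rho_{\boldsymbol{b}}=0$ can be solved for $l'$ as a function of $l$ and the $\kappa_i$; substituting this relation together with $a=s(t)$ into $\eta_{\boldsymbol{b}}\eta_{\boldsymbol{b}}\lambda_{\boldsymbol{b}}$ should yield a nonzero multiple of $\rho_{\boldsymbol{b}}'$, and since $d\lambda_{\boldsymbol{b}}\ne0$ the swallowtail criterion applies. If $\kappa_2=0$ at $t$, then $\kappa_3\ne0$ by the standing hypothesis, $\beta_{\boldsymbol{b}}=\kappa_3(\kappa_1\kappa_3-\kappa_2')$, and $\rho_{\boldsymbol{b}}=l\beta_{\boldsymbol{b}}(\kappa_1\kappa_3-2\kappa_2')$, so $\rho_{\boldsymbol{b}}=0$ is equivalent to $l(\kappa_1\kappa_3-2\kappa_2')=0$; one then treats the subcases $l=0$ and $l\ne0$ separately, computing $\eta_{\boldsymbol{b}}\eta_{\boldsymbol{b}}\lambda_{\boldsymbol{b}}$ and $\rho_{\boldsymbol{b}}'$ in each and checking that they vanish simultaneously, exactly as in the $\kappa_1=0$ branch of the proof of Theorem~\ref{thm:singsn} with the roles of $\kappa_1$ and $\kappa_2$ interchanged. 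The only real difficulty I anticipate is computational---carrying out $\delta_{\boldsymbol{b}}'$, $s'(t)$, $(S_{\boldsymbol{b}})_t$, and then $\eta_{\boldsymbol{b}}\lambda_{\boldsymbol{b}}$ and $\eta_{\boldsymbol{b}}\eta_{\boldsymbol{b}}\lambda_{\boldsymbol{b}}$ without error---and organizing the degenerate case $\kappa_2=0$ so that the stated criterion $\rho_{\boldsymbol{b}}'\ne0$ emerges cleanly in every branch; conceptually the argument is identical to that of Theorem~\ref{thm:singsn}.
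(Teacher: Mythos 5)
Your proposal follows essentially the same route as the paper's proof: establish that $S_{\boldsymbol{b}}$ is a front via the rank-two matrix, use the same null vector field $\eta_{\boldsymbol{b}}$ and singularity identifier $\lambda_{\boldsymbol{b}}$ (yours differs only by an overall sign, which is harmless), show $\eta_{\boldsymbol{b}}\lambda_{\boldsymbol{b}}|_{(t,s(t))}$ is a nonzero multiple of $\rho_{\boldsymbol{b}}$, and split the swallowtail case into $\kappa_2\ne0$ (solve $\rho_{\boldsymbol{b}}=0$ for $l'$ and substitute into $\eta_{\boldsymbol{b}}\eta_{\boldsymbol{b}}\lambda_{\boldsymbol{b}}$) and $\kappa_2=0$ (with the subcases $l=0$ and $l\ne0$), exactly as the paper does. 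The computations you defer are precisely the ones the paper carries out and they confirm your predicted identities, so the argument is correct and structurally identical.
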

\begin{proof}
	From Lemma \ref{lem:s2s3frl}, 
	$S_{\boldsymbol{b}}$ is a frontal 
	with a unit normal vector $\boldsymbol{b}$.
	The noticing $(\kappa_2, \kappa_3) \neq (0, 0)$, 
	we have
	$$
	\rm rank
	\begin{pmatrix}
		(S_{\boldsymbol{b}})_t & \boldsymbol{b}_t \\
		(S_{\boldsymbol{b}})_a & \boldsymbol{b}_a 
	\end{pmatrix}
	= \rm rank
	\begin{pmatrix}
		(S_{\boldsymbol{b}})_t & -\kappa_2\boldsymbol{e} - \kappa_3\boldsymbol{\nu} \\
		\dfrac{\kappa_3\boldsymbol{e}-\kappa_2\boldsymbol{\nu}}{\kappa_3^2+\kappa_2^2} & 0
	\end{pmatrix}
	= 2
	$$
	which shows that $(S_{\boldsymbol{b}}, \boldsymbol{b})$ is an immersion.
	Therefore, $S_{\boldsymbol{b}}$ is a front. 
	From \eqref{eq:stb} we can calculate $s(t)$,
	we know that $(t,s(t))$ is a singular point of $S_{\boldsymbol{b}}$.
	Then by \eqref{eq:s3ta}, the rank of
	$dS_{\boldsymbol{b}}|_{(t,s(t))}$ is one.
	By a direct calculation,
	the null vector field $\eta_{\boldsymbol{b}}$ 
	and the singularity identifier $\lambda_{\boldsymbol{b}}$ 
	for $S_{\boldsymbol{b}}$ are given as follows.
	$$
	\begin{cases}
		\eta_{\boldsymbol{b}}=\partial_t - \Big(\dfrac{l\kappa_3}{\sqrt{\kappa_3^2+\kappa_2^2}}\Big)\partial_a \\
		\lambda_{\boldsymbol{b}}(t,a)=\det\big((S_{\boldsymbol{b}})_t, (S_{\boldsymbol{b}})_a, \boldsymbol{b}\big)=
		-\dfrac{l\kappa_2}{\sqrt{\kappa_3^2+\kappa_2^2}}
		-a\dfrac{\beta_{\boldsymbol{b}}}{\kappa_3^2+\kappa_2^2}
	\end{cases}
	$$
	Calculating $\eta_{\boldsymbol{b}} \lambda_{\boldsymbol{b}}$
	and substituting $a=s(t)$,
	we obtain
	$$
	\eta_{\boldsymbol{b}} \lambda_{\boldsymbol{b}}|_{(t,s(t))}
	=
	\dfrac{\rho_{\boldsymbol{b}}}
	{\beta_{\boldsymbol{b}}\sqrt{\kappa_3^2+\kappa_2^2}}.
	$$
	Then we have the assertion for the case of cuspidal edge.
	
	If $\kappa_2\ne0$, then $\rho_{\boldsymbol{b}}=0$ is equivalent to
	\begin{equation}\label{eq:lpb}
		l'=
		\dfrac{\beta_{\boldsymbol{b}}(\kappa_1\kappa_3-2\kappa_2')+\beta_{\boldsymbol{b}}'\kappa_2}{\kappa_2\beta_{\boldsymbol{b}}}l.
	\end{equation}
	Calculating $\eta_{\boldsymbol{b}}\eta_{\boldsymbol{b}}
	\lambda_{\boldsymbol{b}}$
	and substituting $a=s(t)$ and \eqref{eq:lpb},
	we obtain
	$$
	\eta_{\boldsymbol{b}}\eta_{\boldsymbol{b}}
	\lambda_{\boldsymbol{b}}|_{(t,s(t))}
	=
	\dfrac{\rho_{\boldsymbol{b}}'}{\beta_{\boldsymbol{b}}\sqrt{\kappa_3^2+\kappa_2^2}}
	$$
	under the condition \eqref{eq:lpb}.
	Thus, we obtained the assertion that $S_{\boldsymbol{b}}$ is a swallowtail in the case of $\kappa_2\ne0$.
	If $\kappa_2=0$, then noticing
	$\beta_{\boldsymbol{b}}\kappa_3\ne0$, the condition
	$\rho_{\boldsymbol{b}}=0$ is equivalent to
	$$
	l(\kappa_1 \kappa_3 - 2\kappa_2')=0.
	$$
	Firstly, we consider the case of $l(t)=0$, 
	then we have
	$$
	\rho_{\boldsymbol{b}}'=
	l'\kappa_3(\kappa_1 \kappa_3 - \kappa_2')(\kappa_1\kappa_3-3\kappa_2')
	=
	\beta_{\boldsymbol{b}}(\kappa_1\kappa_3-3\kappa_2')l'\quad
	\text{and}\quad
	\eta_{\boldsymbol{b}}\eta_{\boldsymbol{b}}\lambda_{\boldsymbol{b}}
	=\dfrac{(\kappa_1\kappa_3-3\kappa_2')l'}{|\kappa_3|}.
	$$
	Secondly, we consider the case of $l(t)\ne0$.
	Then by $\rho_{\boldsymbol{b}}=0$, we have
	$\kappa_1 \kappa_3 - 2\kappa_2'=0$.
	If $\kappa_1=0$, then $\beta_{\boldsymbol{b}}=0$ holds.
	So we may assume that $\kappa_1\ne0$.
	We have
	$$
	\rho_{\boldsymbol{b}}'=
	\dfrac{\kappa_1\kappa_3^2}{4} \Big(\kappa_3 (4 l \kappa_1' - \kappa_1 l') + 
	6 l (\kappa_1 \kappa_3' - \kappa_2'')\Big),
	$$
	$$
	\eta_{\boldsymbol{b}}\eta_{\boldsymbol{b}}\lambda_{\boldsymbol{b}}
	=\dfrac{1}{2|\kappa_3|}\Big(\kappa_3 (4 l \kappa_1' - \kappa_1 l') + 
	6 l (\kappa_1 \kappa_3' - \kappa_2'')\Big).
	$$
	Thus, we obtained the assertion that $S_{\boldsymbol{b}}$ is a swallowtail in the case of $\kappa_2=0$.
\end{proof}
Since we are interested in the case of $\hat{\gamma}(t)$ has a singular
point, we state the theorem in the case of $l(u)=0$.
In this case, $s(t)=0$.
\begin{corollary}
	Under the same assumption in Theorem {\rm \ref{thm:sbceswa}},
	if $l(t)=0$, the following hold.
	The germ $S_{\boldsymbol{b}}$ at $(t,0)$ 
	is a cuspidal edge if and only if
	$$
	l'\kappa_2\ne0.
	$$
	The germ $S_{\boldsymbol{b}}$ at $(t,0)$ 
	is a swallowtail if and only if
	$$
	l'=0,\ \kappa_2l''\ne0\
	\text{or}\
	\kappa_2=0,\ 
	l'(\kappa_1\kappa_3-3\kappa_2')\ne0.
	$$
\end{corollary}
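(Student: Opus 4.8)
The plan is to specialize Theorem~\ref{thm:sbceswa} to the locus where $l(t)=0$. The first observation is that by \eqref{eq:stb} the singular value of the parameter shrinks to $s(t)=0$ when $l(t)=0$, so the singular point under consideration is $(t,0)$; then the only work is to evaluate the invariant $\rho_{\boldsymbol{b}}$ (and, when it vanishes, $\rho_{\boldsymbol{b}}'$) under the hypothesis $l=0$ and read off the criteria of Theorem~\ref{thm:sbceswa}.

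First I would substitute $l=0$ into
$$
\rho_{\boldsymbol{b}}=l\big(\beta_{\boldsymbol{b}}(\kappa_1\kappa_3-2\kappa_2')+\beta_{\boldsymbol{b}}'\kappa_2\big)-l'\kappa_2\beta_{\boldsymbol{b}},
$$
which leaves $\rho_{\boldsymbol{b}}=-l'\kappa_2\beta_{\boldsymbol{b}}$. Since $\beta_{\boldsymbol{b}}\neq0$ by assumption, Theorem~\ref{thm:sbceswa} yields that $S_{\boldsymbol{b}}$ at $(t,0)$ is a cuspidal edge precisely when $l'\kappa_2\neq0$, which is the first assertion. For the swallowtail case, the equation $\rho_{\boldsymbol{b}}=0$ under $l=0$ reads $l'\kappa_2\beta_{\boldsymbol{b}}=0$, hence (using $\beta_{\boldsymbol{b}}\neq0$ once more) either $l'=0$ or $\kappa_2=0$; these are exactly the two branches in the statement. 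In the branch $l'=0$ together with $l=0$, differentiating $\rho_{\boldsymbol{b}}$ and using $l=l'=0$ annihilates every term except $-l''\kappa_2\beta_{\boldsymbol{b}}$, so $\rho_{\boldsymbol{b}}'\neq0$ is equivalent to $\kappa_2 l''\neq0$. In the branch $\kappa_2=0$, I would instead invoke the expression for $\rho_{\boldsymbol{b}}'$ already obtained in the proof of Theorem~\ref{thm:sbceswa} for the sub-case $\kappa_2=0$, namely $\rho_{\boldsymbol{b}}'=\beta_{\boldsymbol{b}}(\kappa_1\kappa_3-3\kappa_2')l'$ when $l=0$, so that $\rho_{\boldsymbol{b}}'\neq0$ becomes $l'(\kappa_1\kappa_3-3\kappa_2')\neq0$. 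Combining the two branches gives the stated swallowtail criterion.

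There is essentially no analytic obstacle here, since the whole argument is the substitution $l=0$ into formulas already derived in Theorem~\ref{thm:sbceswa} and its proof. The only point requiring care is the bookkeeping: one must check that the two sub-cases $l'=0$ and $\kappa_2=0$ genuinely exhaust the zero set of $\rho_{\boldsymbol{b}}$ restricted to $\{l=0\}$, and that in the $\kappa_2=0$ branch one uses the special-case formula from the proof of Theorem~\ref{thm:sbceswa} (valid there because $\beta_{\boldsymbol{b}}\kappa_3\neq0$) rather than the generic expression \eqref{eq:lpb}, which is singular at $\kappa_2=0$.
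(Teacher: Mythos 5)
Your proposal is correct and follows exactly the route the paper intends (the corollary is stated without a separate proof as an immediate specialization of Theorem \ref{thm:sbceswa}): with $l=0$ one has $s(t)=0$ and $\rho_{\boldsymbol{b}}=-l'\kappa_2\beta_{\boldsymbol{b}}$, giving the cuspidal edge condition, and the two branches $l'=0$ (where $\rho_{\boldsymbol{b}}'=-l''\kappa_2\beta_{\boldsymbol{b}}$) and $\kappa_2=0$ (where $\rho_{\boldsymbol{b}}'=l'\beta_{\boldsymbol{b}}(\kappa_1\kappa_3-3\kappa_2')$, matching the formula in the theorem's proof) give the swallowtail condition. Your bookkeeping of the two sub-cases and the use of $\beta_{\boldsymbol{b}}\neq0$ are exactly right.
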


\section{Application gluing of two surfaces}
\subsection{Gluing of two surfaces}\label{sec:gluing}
In this section, we study the gluing of two frontal surfaces $f_1$ and $f_2$ along a curve $\hat\gamma$.
Since $f_i$ $(i=1,2)$ are frontals, there are unit normal vectors $\boldsymbol{\nu}_i$.
So we can construct developable surfaces $S_{\boldsymbol{\nu}_1}, S_{\boldsymbol{b}_1}$ and $S_{\boldsymbol{\nu}_2}, S_{\boldsymbol{b}_2}$. 
Looking at geometries of these surfaces, we study the geometry of gluing of two surfaces along the gluing locus $\hat{\gamma}$. 
We give conditions that the developable surfaces $S_{\boldsymbol{\nu}_i}$ and $S_{\boldsymbol{b}_i}$ 
are cylindrical, conical and having cuspidal edge or
swallowtail singularities.
Furthermore, we study how the angle between two normal vectors
of $f_1$ and $f_2$ affects the gluing properties.

\begin{definition}
	Let $U\subset \R^2$ be an open neighborhood of the origin.
	We set $U_1=U\cap \{(t,a)\in\R^2\,|\,a\geq0\}$
	and
	$U_2=U\cap \{(t,a)\in\R^2\,|\,a\leq0\}$.
	Let $f_i: U_i \to \R^3$ be two fronts $(i=1,2)$
	satisfying 
	$$
	f_1|_I=f_2|_I,
	$$
	where $I=U\cap \{(t,0)\in\R^2\}$.
	Let $\hat{\alpha}_i (t)= f_i|_I(t)=f_i(t,0)$ and let us set
	$$
	\hat\gamma(t)=\hat\alpha_1(t)=\hat\alpha_2(t).
	$$
	Then the triple $(f_1, f_2, \hat{\gamma})$ is called a {\it glue}
	of $f_1$ and $f_2$ along $\hat\gamma$.
\end{definition}
In the above definition, since $f_1|_I=f_2|_I$,
one can interpret that the two surfaces are glued along $\hat\gamma$.
The curve $\hat{\gamma}$ is called a {\it gluing locus}.

We assume that there exist a function $l(t)$ and a unit vector $\boldsymbol{e}$ 
such that $\hat{\gamma}' = l \boldsymbol{e}$. 
Let $\boldsymbol{\nu}_i$ be the unit normal vector of $f_i$ and let us set $\boldsymbol{b}_i=\boldsymbol{e}\times \boldsymbol{\nu}_i$
for $i=1,2$. 
Then we have two frames 
$$
\{\boldsymbol{e}, \boldsymbol{\nu}_i, \boldsymbol{b}_i\}\quad
(i=1,2)
$$
along $\hat{\gamma}$.
For these frames, 
the functions $\kappa_{i1}(t)$, $\kappa_{i2}(t)$ and $\kappa_{i3}(t)$ 
are determined by the Frenet-Serret type formula \eqref{eq:frenet}, they are regard as invariants of $f_i$.
Moreover, let $\kappa_{\boldsymbol{\nu}_i},\kappa_{g_i},\tau_{g_i}$ denote the 
{\it normal curvature}, {\it geodesic curvature} with respect to
$\boldsymbol{\nu}_i$ and
{\it geodesic torsion} of $\hat{\gamma}$ as a curve on $f_i$,
which are given by
$$
\kappa_{\boldsymbol{\nu}_i}=\dfrac{\hat{\gamma}''\cdot\boldsymbol{\nu}_i}{|\hat{\gamma}'|^2},\
\kappa_{g_i}=\dfrac{\det(\hat{\gamma}',\hat{\gamma}'',\boldsymbol{\nu}_i)}{|\hat{\gamma}'|^3},\ 
\tau_{g_i}=\dfrac{\det(\hat{\gamma}',\boldsymbol{\nu}_i,\boldsymbol{\nu}_i')}{|\hat{\gamma}'|^2}.
$$
Let $\theta$ be the angle between 
$\boldsymbol{\nu}_1$ and $\boldsymbol{\nu}_2$
, and it can be a function of parameter $t$.
Then we have the following lemma.
\begin{lemma}\label{lm:k2-k1}
	Under the above settings, it holds that
	\begin{equation}\label{eq:kttheta}
		\begin{cases}
			\kappa_{i1} =l\kappa_{\boldsymbol{\nu}_i}\\
			\kappa_{i2} =-|l|\kappa_{g_i}\\
			\kappa_{i3} =l\tau_{g_i}
		\end{cases}
	\end{equation}
	and
	\begin{equation}\label{eq:k123theta}
		\begin{cases}
			\kappa_{21} =\kappa_{11}\cos\theta+\kappa_{12}\sin\theta \\
			\kappa_{22} =-\kappa_{11}\sin\theta+\kappa_{12}\cos\theta \\
			\kappa_{23} =\kappa_{13}+\theta'.
		\end{cases}
	\end{equation}
\end{lemma}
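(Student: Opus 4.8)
The plan is to derive both systems of identities directly from the definitions, using the Darboux frame of $\hat\gamma$ on each $f_i$ as an intermediary. First I would recall that for a curve $\hat\gamma$ on a surface $f_i$ with unit normal $\bnu_i$, the Darboux frame is $\{\vect{T}, \bnu_i, \vect{T}\times\bnu_i\}$ where $\vect{T}=\hat\gamma'/|\hat\gamma'|$ is the unit tangent; with respect to arc-length $\vect{s}$ its derivatives involve exactly $\kappa_{\bnu_i}$, $\kappa_{g_i}$, $\tau_{g_i}$. Since our parameter $t$ is not arc-length and $\hat\gamma'=l\e$, we have $\vect{T}=\sign(l)\e$ and $ds/dt=|l|$. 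The functions $\kappa_{i1},\kappa_{i2},\kappa_{i3}$ are, by \eqref{eq:frenet}, the entries of the derivative-with-respect-to-$t$ matrix of the frame $\{\e,\bnu_i,\bb_i\}$; so I would compute $\e',\bnu_i',\bb_i'$ in terms of the $t$-derivatives of the Darboux frame, picking up a factor $|l|$ from the chain rule and a sign $\sign(l)$ from $\e=\sign(l)\vect{T}$. Reading off the coefficients gives $\kappa_{i1}=l\kappa_{\bnu_i}$, $\kappa_{i2}=-|l|\kappa_{g_i}$, $\kappa_{i3}=l\tau_{g_i}$; alternatively these can be obtained by comparing \eqref{eq:frenet} with the given formulas $\kappa_{\bnu_i}=\hat\gamma''\cdot\bnu_i/|\hat\gamma'|^2$ etc.\ after substituting $\hat\gamma'=l\e$, $\hat\gamma''=l'\e+l\e'=l'\e+l(\kappa_{i1}\bnu_i+\kappa_{i2}\bb_i)$, which makes the computation purely algebraic.

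For the second system \eqref{eq:k123theta}, the key observation is that $\e$ is common to both frames, and that rotating $\bnu_1$ by the angle $\theta$ about $\e$ produces $\bnu_2$; precisely, since $\{\bnu_1,\bb_1\}$ and $\{\bnu_2,\bb_2\}$ are both positively oriented orthonormal bases of $\e^\perp$, there is a function $\theta(t)$ with
\begin{equation}\label{eq:rotplan}
	\bnu_2=\cos\theta\,\bnu_1+\sin\theta\,\bb_1,\qquad
	\bb_2=-\sin\theta\,\bnu_1+\cos\theta\,\bb_1.
\end{equation}
(One should note the orientation choice here; if the rotation goes the other way the sign of $\theta'$ flips, but the stated form corresponds to \eqref{eq:rotplan}, so I would adopt that as the definition of $\theta$ and remark that it agrees with ``the angle between $\bnu_1$ and $\bnu_2$'' up to sign.) Then $\kappa_{21}=\e'\cdot\bnu_2$. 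Using $\e'=\kappa_{11}\bnu_1+\kappa_{12}\bb_1$ from \eqref{eq:frenet} for the first frame and substituting \eqref{eq:rotplan}, we get $\kappa_{21}=\kappa_{11}\cos\theta+\kappa_{12}\sin\theta$. Likewise $\kappa_{22}=\e'\cdot\bb_2=-\kappa_{11}\sin\theta+\kappa_{12}\cos\theta$, which also handles the second line.

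For the third line $\kappa_{23}=\bnu_2'\cdot\bb_2$, I would differentiate \eqref{eq:rotplan}: $\bnu_2'=-\theta'\sin\theta\,\bnu_1+\cos\theta\,\bnu_1'+\theta'\cos\theta\,\bb_1+\sin\theta\,\bb_1'$. Dotting with $\bb_2=-\sin\theta\,\bnu_1+\cos\theta\,\bb_1$ and using $\bnu_1\cdot\bb_1=0$, $\bnu_1'\cdot\bnu_1=\bb_1'\cdot\bb_1=0$, $\bnu_1'\cdot\bb_1=\kappa_{13}$ (from \eqref{eq:frenet}) and $\bb_1'\cdot\bnu_1=-\kappa_{13}$, the terms organize so that the $\theta'$-contribution collapses to $\theta'(\sin^2\theta+\cos^2\theta)=\theta'$ and the remaining terms give $\kappa_{13}(\cos^2\theta+\sin^2\theta)=\kappa_{13}$, hence $\kappa_{23}=\kappa_{13}+\theta'$. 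The main obstacle is not any deep idea but careful bookkeeping: getting the $\sign(l)$ versus $|l|$ distinctions right in \eqref{eq:kttheta} (since $\kappa_{i2}$ carries $|l|$ while $\kappa_{i1},\kappa_{i3}$ carry $l$, reflecting that $\kappa_{i2}$ is built from a cross product that already contains one $\vect{T}$), and fixing the orientation convention in \eqref{eq:rotplan} consistently so that the sign of $\theta'$ in the last identity matches the statement. Everything else is a direct expansion using only \eqref{eq:frenet} and the definitions of $\kappa_{\bnu_i},\kappa_{g_i},\tau_{g_i}$.
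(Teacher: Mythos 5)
Your proposal is correct and follows essentially the same route as the paper: identity \eqref{eq:kttheta} by substituting $\hat\gamma'=l\boldsymbol{e}$, $\hat\gamma''=l'\boldsymbol{e}+l(\kappa_{i1}\boldsymbol{\nu}_i+\kappa_{i2}\boldsymbol{b}_i)$ and $\boldsymbol{\nu}_i'=-\kappa_{i1}\boldsymbol{e}+\kappa_{i3}\boldsymbol{b}_i$ into the defining formulas of $\kappa_{\boldsymbol{\nu}_i},\kappa_{g_i},\tau_{g_i}$, and identity \eqref{eq:k123theta} via the rotation relations $\boldsymbol{\nu}_2=\cos\theta\,\boldsymbol{\nu}_1+\sin\theta\,\boldsymbol{b}_1$, $\boldsymbol{b}_2=\cos\theta\,\boldsymbol{b}_1-\sin\theta\,\boldsymbol{\nu}_1$ together with $\kappa_{i1}=\boldsymbol{e}'\cdot\boldsymbol{\nu}_i$, $\kappa_{i2}=\boldsymbol{e}'\cdot\boldsymbol{b}_i$, $\kappa_{i3}=\boldsymbol{\nu}_i'\cdot\boldsymbol{b}_i$. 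Your extra care with the $|l|$ versus $l$ factors and the orientation convention for $\theta$ only makes explicit what the paper leaves implicit.
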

\begin{proof}
	Since $\hat{\gamma}'=l\boldsymbol{e}$,
	we have $\hat{\gamma}''=l'\boldsymbol{e}+l(\kappa_{i1}
	\boldsymbol{v}_i+\kappa_{i2}\boldsymbol{b}_i)$,
	and by \eqref{eq:frenet}, we have
	$\boldsymbol{\nu}_i'=-\kappa_{i1}
	\boldsymbol{e}_i+\kappa_{i3}\boldsymbol{b}_i$.
	Calculating them, we obtained
	$$
	\kappa_{\boldsymbol{\nu}_i}
	=\dfrac{l\kappa_{i1}}{|l|^2},\
	\kappa_{g_i}=-\dfrac{l^2\kappa_{i2}}{|l|^3},\ 
	\tau_{g_i}=\dfrac{l\tau_{i3}}{|l|^2}.
	$$
	Thus, we obtained the result of \eqref{eq:kttheta}.
	
	The vectors $\boldsymbol{\nu}_2, \boldsymbol{b}_2$ of the frame $\{\boldsymbol{e}, \boldsymbol{\nu}_2, \boldsymbol{b}_2\}$ 
	is obtained by rotating $\boldsymbol{\nu}_1, \boldsymbol{b}_1$
	of the frame 
	$\{\boldsymbol{e}, \boldsymbol{\nu}_1, \boldsymbol{b}_1\}$
	around $\boldsymbol{e}$ by angle $\theta$ respectively.
	Then by using Rodrigues' rotation formula, 
	we obtained 
	\begin{equation}\label{eq:rodrigues}
		\begin{array}{rl}
			\boldsymbol{\nu}_2&=\cos\theta\boldsymbol{\nu}_1+\sin\theta\boldsymbol{b}_1,\\
			\boldsymbol{b}_2&=\cos\theta\boldsymbol{b}_1-\sin\theta\boldsymbol{\nu}_1.
		\end{array}
	\end{equation}
	By the Frenet-Serret type formula \eqref{eq:frenet}, we can get
	$$
	\kappa_{i1} =\boldsymbol{e}'\cdot\boldsymbol{\nu}_i,\
	\kappa_{i2} =\boldsymbol{e}'\cdot\boldsymbol{b}_i,\
	\kappa_{i3} =\boldsymbol{\nu}_i'\cdot\boldsymbol{b}_i.
	$$
	Therefore, we obtained the result of \eqref{eq:k123theta}.
\end{proof}
As in Section \ref{sec:dev}, we construct four developable
surfaces by using $\boldsymbol{\nu}_i, \boldsymbol{b}_i$ $(i=1,2)$.
Let us set these surfaces
$$
S_{\boldsymbol{\nu}_1},\quad
S_{\boldsymbol{b}_1},\quad
S_{\boldsymbol{\nu}_2},\quad
S_{\boldsymbol{b}_2}
$$
respectively.
It should be noted that the surfaces $S_{\boldsymbol{\nu}_1}$ and $S_{\boldsymbol{b}_1}$ are obtained just from the information of $f_1$ itself without gluing.
However, considering the gluing $(f_2,f_1,\hat\gamma)$,
the surfaces $S_{\boldsymbol{\nu}_1}$ and $S_{\boldsymbol{b}_1}$ is regarded as the surfaces along $\hat\gamma$ on $f_2$, the author believes it will be meaningful.
Since $\theta$ is the angle between $\boldsymbol{\nu}_1$ and $\boldsymbol{\nu}_2$, the surfaces $S_{\boldsymbol{\nu}_2}, S_{\boldsymbol{b}_2}$ are obtained by rotating the each ruling by $\theta$ from $S_{\boldsymbol{\nu}_1},S_{\boldsymbol{b}_1}$ 
along $\hat\gamma$ respectively.
Furthermore, $S_{\boldsymbol{b}_1}$ is obtained by rotating the each ruling by $\pi/2$ from $S_{\boldsymbol{\nu}_1}$ along $\hat\gamma$, these four surfaces are not independent.
However, we treat them separately since the conditions are different.
We define special gluing as when these developable surfaces are special.
\begin{definition}\label{def:cylcone}
	The gluing $(f_1, f_2, \hat{\gamma})$ is said to be
	\begin{itemize}
		\item {\it $S_{\boldsymbol{\nu}_i}$-cylindrical} 
		if $S_{\boldsymbol{\nu}_i}$ is a cylinder.
		\item {\it $S_{\boldsymbol{b}_i}$-cylindrical} 
		if $S_{\boldsymbol{b}_i}$ is a cylinder.
		\item {\it $S_{\boldsymbol{\nu}_i}$-conical} 
		if $S_{\boldsymbol{\nu}_i}$ is a cone.
		\item {\it $S_{\boldsymbol{b}_i}$-conical} 
		if $S_{\boldsymbol{b}_i}$ is a cone.
	\end{itemize}
\end{definition}
Furthermore, we define special gluing at a point when these developable surfaces have a fundamental singularity.
We remark that each ruling has a unique singular point for a developable surface.
\begin{definition}\label{def:glucesw}
	The gluing $(f_1, f_2, \hat{\gamma})$ at $t_0$ is said to be
	\begin{itemize}
		\item {\it $S_{\boldsymbol{\nu}_i}$-cuspidal edgy}
		if $S_{\boldsymbol{\nu}_i}$ is a cuspidal edge at $(t_0,a)$ for some $a\in\R$.
		\item {\it $S_{\boldsymbol{\nu}_i}$-swallowtailed}
		if $S_{\boldsymbol{\nu}_i}$ is a swallowtail at $(t_0,a)$ for some $a\in\R$.
		\item {\it $S_{\boldsymbol{b}_i}$-cuspidal edgy} if $S_{\boldsymbol{b}_i}$ is a cuspidal edge at $(t_0,a)$ for some $a\in\R$.
		\item {\it $S_{\boldsymbol{b}_i}$-swallowtailed}
		if $S_{\boldsymbol{b}_i}$ is a swallowtail at $(t_0,a)$ for some $a\in\R$.
	\end{itemize}
\end{definition}

\subsection{Glue with cylinder or cone}
We give the conditions of the special gluings given in
Definition \ref{def:cylcone} and Definition \ref{def:glucesw} in terms of the invariants of original surfaces.
Since we are interested in the case where the curve $\hat{\gamma}$ has a singularity.
By \eqref{eq:k123theta}, we can obtain the conditions for $S_{\boldsymbol{\nu}_2}$ to be a cylinder or a cone in terms of the invariant of $S_{\boldsymbol{\nu}_1}$. 
Let $(\kappa_{i1}, \kappa_{i2}, \kappa_{i3})$ be the invariant of the frame $\{\boldsymbol{e},\boldsymbol{\nu}_i,\boldsymbol{b}_i\}$ 
as in Section \ref{sec:gluing}.
Furthermore, we set
\begin{align*}
	\beta_{\boldsymbol{\nu}_i}(t)&=\kappa_{i1}^2\kappa_{i2}
	+ \kappa_{i2}\kappa_{i3}^2 
	+ \kappa_{i1}'\kappa_{i3}  - \kappa_{i1}\kappa_{i3}',\\
	\rho_{\boldsymbol{\nu}_i}(t)
	&=
	l\big(\beta_{\boldsymbol{\nu}_i}(\kappa_{i2}\kappa_{i3}+2\kappa_{i1}')-\beta_{\boldsymbol{\nu}_i}'\kappa_{i1}\big)+l'\kappa_{i1}\beta_{\boldsymbol{\nu}_i}.
\end{align*}

\begin{theorem}\label{thm:4.5}
	The developable surface $S_{\boldsymbol{\nu}_i}$ is
	a cylinder if and only if $\beta_{\nu_i}\equiv0$.
	The developable surface $S_{\boldsymbol{\nu}_i}$ is a
	cone if and only if $\beta_{\nu_i}\neq0$ and $\rho_{\nu_i}\equiv0$.
\end{theorem}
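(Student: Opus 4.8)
The plan is to reduce the statement directly to Theorem \ref{thm:s3cylcone}. For each fixed $i\in\{1,2\}$ the relevant data are the curve $\hat\gamma$ with $\hat\gamma'=l\boldsymbol{e}$, the frame $\{\boldsymbol{e},\boldsymbol{\nu}_i,\boldsymbol{b}_i\}$ along it, and the Frenet-type invariants $(\kappa_{i1},\kappa_{i2},\kappa_{i3})$ determined by \eqref{eq:frenet}. This is precisely the setting of Section \ref{sec:frame} and Section \ref{sub:sv} with $(\boldsymbol{\nu},\boldsymbol{b},\kappa_1,\kappa_2,\kappa_3)$ replaced by $(\boldsymbol{\nu}_i,\boldsymbol{b}_i,\kappa_{i1},\kappa_{i2},\kappa_{i3})$, and $S_{\boldsymbol{\nu}_i}$ is exactly the developable surface $S_{\boldsymbol{\nu}}$ of Lemma \ref{lem:svsb} built from this frame. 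So the first step is simply to observe that the quantities $\beta_{\boldsymbol{\nu}_i}$ and $\rho_{\boldsymbol{\nu}_i}$ defined just before the statement are obtained from $\beta_{\boldsymbol{\nu}}$ in \eqref{eq:betav} and from $\rho_{\boldsymbol{\nu}}$ in Theorem \ref{thm:s3cylcone} by this same substitution of the index-$i$ invariants.

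Next I would record, for this frame, the two computations already carried out in Section \ref{sub:sv}: that $\delta_{\boldsymbol{\nu}_i}'=\beta_{\boldsymbol{\nu}_i}\boldsymbol{w}_i$ with $\boldsymbol{w}_i=(-\kappa_{i1}\boldsymbol{e}+\kappa_{i3}\boldsymbol{b}_i)/(\kappa_{i3}^2+\kappa_{i1}^2)^{3/2}$, which never vanishes under the running hypothesis $(\kappa_{i1},\kappa_{i3})\neq(0,0)$; and that, when $\beta_{\boldsymbol{\nu}_i}\neq0$, differentiating the striction curve $\hat\sigma_{\boldsymbol{\nu}_i}(t)=S_{\boldsymbol{\nu}_i}(t,s(t))$ with $s(t)=l\kappa_{i1}\sqrt{\kappa_{i3}^2+\kappa_{i1}^2}/\beta_{\boldsymbol{\nu}_i}$ gives $\hat\sigma_{\boldsymbol{\nu}_i}'=(\rho_{\boldsymbol{\nu}_i}/\beta_{\boldsymbol{\nu}_i}^2)(\kappa_{i3}\boldsymbol{e}+\kappa_{i1}\boldsymbol{b}_i)$. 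These are literally the formulas in the proof of Theorem \ref{thm:s3cylcone} with the index $i$ attached, since that proof only used $\hat\gamma'=l\boldsymbol{e}$ and the Frenet relations \eqref{eq:frenet}, both of which hold for each of the two frames. From the first formula, $S_{\boldsymbol{\nu}_i}$ is a cylinder iff $\delta_{\boldsymbol{\nu}_i}'\equiv0$ iff $\beta_{\boldsymbol{\nu}_i}\equiv0$; from the second, when $\beta_{\boldsymbol{\nu}_i}\neq0$ the surface is non-cylindrical and its striction curve is a single point exactly when $\hat\sigma_{\boldsymbol{\nu}_i}'\equiv0$, i.e.\ $\rho_{\boldsymbol{\nu}_i}\equiv0$. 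This yields both equivalences.

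I do not expect a genuine obstacle: the content is that Theorem \ref{thm:s3cylcone}, proved for an abstract frame along an abstract curve, applies verbatim to each of the two frames $\{\boldsymbol{e},\boldsymbol{\nu}_i,\boldsymbol{b}_i\}$, which happen to share the same $\hat\gamma$ and the same length function $l$. The only point worth a remark is that although $\boldsymbol{e}$ and $l$ are common to $i=1$ and $i=2$, the invariants $\kappa_{ij}$ genuinely differ between the two frames (they are related by Lemma \ref{lm:k2-k1}), so the cylinder/cone conditions for $S_{\boldsymbol{\nu}_1}$ and $S_{\boldsymbol{\nu}_2}$ are distinct conditions on the original pair of surfaces, which is precisely why the theorem is stated for both indices at once.
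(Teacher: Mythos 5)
Your proposal is correct and follows essentially the same route as the paper: the paper's proof also reduces Theorem \ref{thm:4.5} to Theorem \ref{thm:s3cylcone} by substituting the invariants $(\kappa_{i1},\kappa_{i2},\kappa_{i3})$ of the frame $\{\boldsymbol{e},\boldsymbol{\nu}_i,\boldsymbol{b}_i\}$ into $\beta_{\boldsymbol{\nu}}$ and $\rho_{\boldsymbol{\nu}}$. The only difference is that the paper additionally uses \eqref{eq:k123theta} to write out $\beta_{\boldsymbol{\nu}_2}$ and $\rho_{\boldsymbol{\nu}_2}$ explicitly in terms of $(\kappa_{11},\kappa_{12},\kappa_{13})$ and $\theta$, which is illustrative rather than logically necessary, so your argument is complete as stated.
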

\begin{proof}
	In Section \ref{sub:sv}, we obtained the conditions $\rho_{\boldsymbol{\nu}}$
	and $\beta_{\boldsymbol{\nu}}$ for $S_{\boldsymbol{\nu}}$ 
	in a general case expressed by the invariants $(\kappa_{1},\kappa_{2},\kappa_{3})$.
	By substituting $(\kappa_{i1},\kappa_{i2},\kappa_{i3})$, 
	we can get $\beta_{\boldsymbol{\nu}_i}$ and $\rho_{\boldsymbol{\nu}_i}$ about $S_{\boldsymbol{\nu}_i}$. 
	By \eqref{eq:k123theta}, we can get $\beta_{\boldsymbol{\nu}_2}$ and $\rho_{\boldsymbol{\nu}_2}$ represented by $(\kappa_{11},\kappa_{12},\kappa_{13})$ is
	\begin{align*}
		\beta_{\boldsymbol{\nu}_2}(t)=&
		(\cos\theta \kappa_{12} - \kappa_{11} \sin\theta) (\cos\theta \kappa_{11} + 
		\kappa_{12} \sin\theta)^2 \\
		&+(\kappa_{13} + 
		\theta') \Big((\cos\theta \kappa_{12}  - 
		\kappa_{11} \sin\theta )(\kappa_{13} + 2 \theta') + 
		\cos\theta \kappa_{11}' + 
		\sin\theta \kappa_{12}'\Big) \\
		&- (\cos\theta \kappa_{11} + 
		\kappa_{12} \sin\theta) (\kappa_{13}' + \theta''),\\
		\rho_{\boldsymbol{\nu}_2}(t)=&l\Big(\beta_{\boldsymbol{\nu}_2} (\cos\theta \kappa_{12} - \kappa_{11} \sin\theta) (\kappa_{13} + 
		\theta') \\
		&+ 2 \beta_{\boldsymbol{\nu}_2} \big(\theta'(\cos\theta \kappa_{12} - \kappa_{11} \sin\theta) + 
		\cos\theta \kappa_{11}' + \sin\theta \kappa_{12}'\big)\\
		&-\beta_{\boldsymbol{\nu}_2}'(\cos\theta \kappa_{11} + \kappa_{12} \sin\theta)\Big)
		+l'\beta_{\boldsymbol{\nu}_2} \Big(\cos\theta \kappa_{11} + \kappa_{12} \sin\theta\Big).
	\end{align*}
	This illustrates the relationship between $S_{\boldsymbol{\nu}_1}$ and $S_{\boldsymbol{\nu}_2}$.
\end{proof}

When the angle $\theta$ between $\boldsymbol{\nu}_1$ and $\boldsymbol{\nu}_2$ 
is a special value, we get the following corollaries.

\begin{corollary}
	Let $\theta=k\pi/2$, $k$ is an integer.
	Then the following hold.
	
	The developable surface $S_{\boldsymbol{\nu}_2}$ is a cylinder
	if and only if 
	the developable surface $S_{\boldsymbol{b}_1}$ is a cylinder.
	
	The developable surface $S_{\boldsymbol{\nu}_2}$ is a cone
	if and only if 
	the developable surface $S_{\boldsymbol{b}_1}$ is a cone.
\end{corollary}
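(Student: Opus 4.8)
The plan is to show that, for $\theta=k\pi/2$ with $k$ odd, the ruled surface $S_{\boldsymbol{\nu}_2}$ coincides with $S_{\boldsymbol{b}_1}$ as a parametrized surface; once this is in hand both equivalences are immediate, since being a cylinder or a cone is a property of that single surface. First I would observe that $\theta=k\pi/2$ is constant, so $\theta'=\theta''=0$, and that $k$ odd gives $\cos\theta=0$ and $\sin\theta=\epsilon$ for some $\epsilon\in\{1,-1\}$. Substituting this into Rodrigues' formula \eqref{eq:rodrigues} yields $\boldsymbol{\nu}_2=\epsilon\,\boldsymbol{b}_1$ and $\boldsymbol{b}_2=-\epsilon\,\boldsymbol{\nu}_1$, and into \eqref{eq:k123theta} (Lemma \ref{lm:k2-k1}) yields $\kappa_{21}=\epsilon\kappa_{12}$, $\kappa_{22}=-\epsilon\kappa_{11}$, $\kappa_{23}=\kappa_{13}$. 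In particular $(\kappa_{21},\kappa_{23})\neq(0,0)\iff(\kappa_{12},\kappa_{13})\neq(0,0)$, so by Lemma \ref{lem:svsb} the surface $S_{\boldsymbol{\nu}_2}$ is defined exactly when $S_{\boldsymbol{b}_1}$ is.

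Next I would compute the ruling direction of $S_{\boldsymbol{\nu}_2}$ from the formula in Lemma \ref{lem:svsb}: using $\boldsymbol{b}_2=-\epsilon\boldsymbol{\nu}_1$, $\kappa_{21}=\epsilon\kappa_{12}$, $\kappa_{23}=\kappa_{13}$ and $\epsilon^2=1$, one gets $\delta_{\boldsymbol{\nu}_2}=(\kappa_{23}\boldsymbol{e}+\kappa_{21}\boldsymbol{b}_2)/\sqrt{\kappa_{23}^2+\kappa_{21}^2}=(\kappa_{13}\boldsymbol{e}-\kappa_{12}\boldsymbol{\nu}_1)/\sqrt{\kappa_{13}^2+\kappa_{12}^2}=\delta_{\boldsymbol{b}_1}$, the sign $\epsilon$ cancelling in the numerator and in the square root alike. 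Since $S_{\boldsymbol{\nu}_2}$ and $S_{\boldsymbol{b}_1}$ are both built over the common gluing locus $\hat\gamma$, this gives $S_{\boldsymbol{\nu}_2}(t,a)=\hat\gamma(t)+a\,\delta_{\boldsymbol{\nu}_2}(t)=\hat\gamma(t)+a\,\delta_{\boldsymbol{b}_1}(t)=S_{\boldsymbol{b}_1}(t,a)$, so the two developable surfaces are literally the same and the corollary follows.

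If one prefers to argue through the invariants, the alternative is to substitute $\kappa_{21}=\epsilon\kappa_{12}$, $\kappa_{22}=-\epsilon\kappa_{11}$, $\kappa_{23}=\kappa_{13}$ into the expressions for $\beta_{\boldsymbol{\nu}_2}$ and $\rho_{\boldsymbol{\nu}_2}$ from Theorem \ref{thm:4.5} and to compare with the quantities obtained from Theorem \ref{thm:s2cylcone} after replacing $(\kappa_1,\kappa_2,\kappa_3)$ by $(\kappa_{11},\kappa_{12},\kappa_{13})$; a short computation should give $\beta_{\boldsymbol{\nu}_2}=-\epsilon\,\beta_{\boldsymbol{b}_1}$ and $\rho_{\boldsymbol{\nu}_2}=\rho_{\boldsymbol{b}_1}$, whence $\beta_{\boldsymbol{\nu}_2}\equiv0\iff\beta_{\boldsymbol{b}_1}\equiv0$ and $(\beta_{\boldsymbol{\nu}_2}\neq0,\ \rho_{\boldsymbol{\nu}_2}\equiv0)\iff(\beta_{\boldsymbol{b}_1}\neq0,\ \rho_{\boldsymbol{b}_1}\equiv0)$, and Theorems \ref{thm:s3cylcone} and \ref{thm:s2cylcone} conclude. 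I do not expect a genuine obstacle here; the step most likely to trip one up is the bookkeeping: one must use that $\theta$ is constant (so $\theta'$ and $\theta''$ vanish), read the hypothesis with $k$ odd (for even $k$ the same reasoning instead gives $S_{\boldsymbol{\nu}_2}=S_{\boldsymbol{\nu}_1}$), and carry the sign $\epsilon$ consistently through \eqref{eq:rodrigues} and \eqref{eq:k123theta}.
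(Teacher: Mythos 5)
Your proposal is correct, and your main argument takes a genuinely different route from the one the paper relies on. The paper states this corollary with no separate proof; it is meant to follow from the proof of Theorem \ref{thm:4.5}, where $\beta_{\boldsymbol{\nu}_2}$ and $\rho_{\boldsymbol{\nu}_2}$ are written out in terms of $(\kappa_{11},\kappa_{12},\kappa_{13})$ and $\theta$, by substituting $\cos\theta=0$, $\sin\theta=\pm1$, $\theta'=\theta''=0$ and comparing with the criteria of Theorem \ref{thm:s2cylcone} for $S_{\boldsymbol{b}_1}$ --- which is exactly your ``alternative'' computation $\beta_{\boldsymbol{\nu}_2}=-\epsilon\,\beta_{\boldsymbol{b}_1}$, $\rho_{\boldsymbol{\nu}_2}=\rho_{\boldsymbol{b}_1}$ (both identities check out). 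Your primary argument is more direct: from \eqref{eq:rodrigues} you get $\boldsymbol{\nu}_2=\epsilon\,\boldsymbol{b}_1$, $\boldsymbol{b}_2=-\epsilon\,\boldsymbol{\nu}_1$, and then $\delta_{\boldsymbol{\nu}_2}=\delta_{\boldsymbol{b}_1}$, so $S_{\boldsymbol{\nu}_2}$ and $S_{\boldsymbol{b}_1}$ coincide as parametrized surfaces (equivalently, $H_{\boldsymbol{\nu}_2}=\pm H_{\boldsymbol{b}_1}$, so the envelopes $\mathcal{D}_{\boldsymbol{\nu}_2}$ and $\mathcal{D}_{\boldsymbol{b}_1}$ agree), and every property --- cylinder, cone, and even the singularity types of Definition \ref{def:glucesw} --- transfers at once, with no need for the invariant criteria. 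What the identification buys is strength and brevity; what the invariant route buys is consistency with the paper's machinery and the explicit sign relations between the $\beta$'s and $\rho$'s. You were also right to flag the hypothesis: as literally stated, ``$k$ an integer'' includes even $k$, where $\boldsymbol{\nu}_2=\pm\boldsymbol{\nu}_1$ and the conclusion should instead concern $S_{\boldsymbol{\nu}_1}$ (that case is the next corollary), so the intended reading is $k$ odd, exactly as you took it.
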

\begin{corollary}
	Let $\theta=k\pi$, $k$ is an integer.
	Then the following hold.
	
	The developable surface $S_{\boldsymbol{\nu}_2}$ is a cylinder
	if and only if 
	the developable surface $S_{\boldsymbol{\nu}_1}$ is a cylinder.
	
	The developable surface $S_{\boldsymbol{\nu}_2}$ is a cone
	if and only if 
	the developable surface $S_{\boldsymbol{\nu}_1}$ is a cone.
\end{corollary}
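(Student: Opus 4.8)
The plan is to show that when $\theta\equiv k\pi$ the two ruled surfaces $S_{\boldsymbol{\nu}_1}$ and $S_{\boldsymbol{\nu}_2}$ are literally the same map, so that the statement reduces at once to Theorem~\ref{thm:4.5} (equivalently, to the definitions of cylinder and cone).

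First I would record that a constant $\theta=k\pi$ gives $\theta'\equiv\theta''\equiv0$, $\cos\theta=(-1)^k$ and $\sin\theta=0$. Feeding this into Rodrigues' formula \eqref{eq:rodrigues} yields $\boldsymbol{\nu}_2=(-1)^k\boldsymbol{\nu}_1$ and $\boldsymbol{b}_2=(-1)^k\boldsymbol{b}_1$, and feeding it into \eqref{eq:k123theta} yields $\kappa_{21}=(-1)^k\kappa_{11}$, $\kappa_{22}=(-1)^k\kappa_{12}$ and $\kappa_{23}=\kappa_{13}$. In particular $(\kappa_{21},\kappa_{23})\ne(0,0)$ exactly when $(\kappa_{11},\kappa_{13})\ne(0,0)$, so $S_{\boldsymbol{\nu}_1}$ and $S_{\boldsymbol{\nu}_2}$ are simultaneously defined.

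Next I would substitute these identities into the formula $\delta_{\boldsymbol{\nu}_i}=(\kappa_{i3}\boldsymbol{e}+\kappa_{i1}\boldsymbol{b}_i)/\sqrt{\kappa_{i3}^2+\kappa_{i1}^2}$ of Lemma~\ref{lem:svsb}. Because $\kappa_{21}\boldsymbol{b}_2=(-1)^k\kappa_{11}\,(-1)^k\boldsymbol{b}_1=\kappa_{11}\boldsymbol{b}_1$ and $\kappa_{23}=\kappa_{13}$, the two sign factors cancel and one obtains $\delta_{\boldsymbol{\nu}_2}\equiv\delta_{\boldsymbol{\nu}_1}$. Since both surfaces have the same base curve $\hat{\gamma}$, it follows that $S_{\boldsymbol{\nu}_2}(t,a)=S_{\boldsymbol{\nu}_1}(t,a)$ for all $(t,a)$. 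Hence $\delta_{\boldsymbol{\nu}_2}'\equiv0$ holds iff $\delta_{\boldsymbol{\nu}_1}'\equiv0$ (the cylinder condition), and the striction curves coincide so that one of them has a single point as image iff the other does (the cone condition); both equivalences follow. Alternatively, one could bypass the geometric argument and simply substitute $\cos\theta=(-1)^k$, $\sin\theta=0$, $\theta'=\theta''=0$ into the formulas for $\beta_{\boldsymbol{\nu}_2}$ and $\rho_{\boldsymbol{\nu}_2}$ displayed in the proof of Theorem~\ref{thm:4.5}; they collapse to $\beta_{\boldsymbol{\nu}_2}=(-1)^k\beta_{\boldsymbol{\nu}_1}$ and $\rho_{\boldsymbol{\nu}_2}=\rho_{\boldsymbol{\nu}_1}$, and Theorem~\ref{thm:4.5} then finishes the proof.

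There is no genuinely hard step here. The only place demanding care is the odd-$k$ case: one must verify that the sign flip in $\kappa_{i1},\kappa_{i2}$ is exactly cancelled by the sign flip in $\boldsymbol{\nu}_i,\boldsymbol{b}_i$ (with $\kappa_{i3}$ unchanged), so that $\delta_{\boldsymbol{\nu}_i}$ and $\rho_{\boldsymbol{\nu}_i}$ come out truly invariant and $\beta_{\boldsymbol{\nu}_i}$ invariant up to an overall sign that does not affect its vanishing. Once this bookkeeping is done, the corollary is immediate.
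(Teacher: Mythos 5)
Your proposal is correct. The paper gives no separate proof of this corollary; it is meant to follow by substituting $\theta=k\pi$ (so $\sin\theta=0$, $\cos\theta=(-1)^k$, $\theta'=\theta''=0$) into the expressions for $\beta_{\boldsymbol{\nu}_2}$ and $\rho_{\boldsymbol{\nu}_2}$ displayed in the proof of Theorem~\ref{thm:4.5}, which is exactly your second route, and your computation $\beta_{\boldsymbol{\nu}_2}=(-1)^k\beta_{\boldsymbol{\nu}_1}$, $\rho_{\boldsymbol{\nu}_2}=\rho_{\boldsymbol{\nu}_1}$ checks out. Your first route is a slightly different and arguably cleaner observation: since $\boldsymbol{\nu}_2=(-1)^k\boldsymbol{\nu}_1$, $\boldsymbol{b}_2=(-1)^k\boldsymbol{b}_1$ and $(\kappa_{21},\kappa_{22},\kappa_{23})=((-1)^k\kappa_{11},(-1)^k\kappa_{12},\kappa_{13})$, the sign cancellation gives $\delta_{\boldsymbol{\nu}_2}=\delta_{\boldsymbol{\nu}_1}$, hence $S_{\boldsymbol{\nu}_2}=S_{\boldsymbol{\nu}_1}$ as parametrized surfaces, and the cylinder and cone equivalences hold tautologically without touching the invariant formulas; the substitution route, by contrast, stays within the framework of Theorem~\ref{thm:4.5} and generalizes directly to the companion corollary with $\theta=k\pi/2$. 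Either argument suffices, and your care with the odd-$k$ sign bookkeeping is exactly the point that needs checking.
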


\subsection{Singularities of glue with tangent surfaces}
Since the frame $\{\boldsymbol{e}, \boldsymbol{\nu}_2, \boldsymbol{b}_2\}$ 
is obtained by rotating the frame 
$\{\boldsymbol{e}, \boldsymbol{\nu}_1, \boldsymbol{b}_1\}$ around 
$\boldsymbol{e}$ by angle $\theta$, as in \eqref{eq:rodrigues},
the conditions for $S_{\boldsymbol{\nu}_2}$ should be
expressed by the invariant of $S_{\boldsymbol{\nu}_1}$ and the rotation angle $\theta$.
Let $(\kappa_{i1}, \kappa_{i2}, \kappa_{i3})$ be the invariant of $S_{\boldsymbol{\nu}_i}$.
\begin{theorem}
	We assume that $(\kappa_{i1},\kappa_{i3})\neq(0,0)$ and $\beta_{\boldsymbol{\nu}_i}\neq0$ at $t$. 
	Then, the germ of $S_{\boldsymbol{\nu}_i}$ at $(t,a)$ is a front
	for any $a$. 
	Moreover, the germ $S_{\boldsymbol{\nu}_i}$ at $(t, s(t))$ is a cuspidal edge if and only if  
	$$
	\beta_{\boldsymbol{\nu}_i}\neq0,\ 
	\rho_{\boldsymbol{\nu}_i}\neq0.
	$$
	The germ $S_{\boldsymbol{\nu}_i}$ at $(t, s(t))$ is a swallowtail if and only if 
	$$
	\beta_{\boldsymbol{\nu}_i}\neq0,\ 
	\rho_{\boldsymbol{\nu}_i}=0,\ 
	\rho_{\boldsymbol{\nu}_i}'\neq0.
	$$
\end{theorem}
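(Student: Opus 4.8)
The plan is to reduce the statement to Theorem \ref{thm:singsn}, which was already proved for a single frontal, so that no genuinely new computation is required. First I would observe that, for each $i=1,2$, the surface $S_{\boldsymbol{\nu}_i}$ is \emph{by construction} the surface $S_{\boldsymbol{\nu}}$ of Section \ref{sub:sv} built from the very same curve $\hat\gamma$ and the frame $\{\boldsymbol{e},\boldsymbol{\nu}_i,\boldsymbol{b}_i\}$ along $\hat\gamma$, whose Frenet-type invariants are $(\kappa_{i1},\kappa_{i2},\kappa_{i3})$ and whose length function is the common $l(t)$ with $\hat\gamma'=l\boldsymbol{e}$. In particular $S_{\boldsymbol{\nu}_i}(t,a)=\hat\gamma(t)+a\,\delta_{\boldsymbol{\nu}_i}(t)$ is defined for $(t,a)$ in a full neighbourhood $I\times\R$ of any base point, even though the front $f_i$ itself lives only on the half-plane $U_i$; hence germs of $S_{\boldsymbol{\nu}_i}$ at $(t,a)$ make sense for every $a$. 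Moreover the functions $\beta_{\boldsymbol{\nu}_i}$ and $\rho_{\boldsymbol{\nu}_i}$ displayed just before the theorem are exactly $\beta_{\boldsymbol{\nu}}$ of \eqref{eq:betav} and $\rho_{\boldsymbol{\nu}}$ of Theorem \ref{thm:s3cylcone} with $(\kappa_1,\kappa_2,\kappa_3)$ replaced by $(\kappa_{i1},\kappa_{i2},\kappa_{i3})$.

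Given this identification, I would simply invoke Theorem \ref{thm:singsn}. Under the hypotheses $(\kappa_{i1},\kappa_{i3})\neq(0,0)$ and $\beta_{\boldsymbol{\nu}_i}\neq0$ at $t$, Lemma \ref{lem:s2s3frl} gives that $\boldsymbol{\nu}_i$ is a unit normal for $S_{\boldsymbol{\nu}_i}$, and the rank computation in the proof of Theorem \ref{thm:singsn} (with subscripts added) shows $(S_{\boldsymbol{\nu}_i},\boldsymbol{\nu}_i)$ is an immersion, so the germ at $(t,a)$ is a front for every $a$. The singular set is $\{a=s(t)\}$ with $s$ given by the $i$-subscripted form of \eqref{eq:stnu}, and at the singular point the cuspidal-edge criterion $\eta_{\boldsymbol{\nu}_i}\lambda_{\boldsymbol{\nu}_i}(0)\neq0$ translates into $\rho_{\boldsymbol{\nu}_i}\neq0$, while the swallowtail criterion $\eta_{\boldsymbol{\nu}_i}\lambda_{\boldsymbol{\nu}_i}(0)=0$, $\eta_{\boldsymbol{\nu}_i}\eta_{\boldsymbol{\nu}_i}\lambda_{\boldsymbol{\nu}_i}(0)\neq0$, $d\lambda_{\boldsymbol{\nu}_i}(0)\neq0$ translates into $\rho_{\boldsymbol{\nu}_i}=0$, $\rho_{\boldsymbol{\nu}_i}'\neq0$ — precisely as in Theorem \ref{thm:singsn}, including the separate treatment of the cases $\kappa_{i1}\neq0$ and $\kappa_{i1}=0$. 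This yields the three assertions verbatim.

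For completeness, and to connect the statement with the gluing picture and Theorem \ref{thm:4.5}, I would also remark that for $i=2$ one may substitute \eqref{eq:k123theta} to express $\beta_{\boldsymbol{\nu}_2}$ and $\rho_{\boldsymbol{\nu}_2}$ — and therefore the cuspidal-edge and swallowtail conditions — in terms of $(\kappa_{11},\kappa_{12},\kappa_{13})$ together with the angle $\theta$ and its derivatives $\theta',\theta''$, using the formulas already recorded in the proof of Theorem \ref{thm:4.5}; only one further differentiation is needed to obtain $\rho_{\boldsymbol{\nu}_2}'$.

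I do not expect a real obstacle here: the content of the theorem is the case already handled in Section \ref{sub:sv}, now read along the gluing locus for the frame attached to $f_i$. The one point I would be careful to spell out is the half-neighbourhood issue noted above — the glue is defined only on $U_i$, whereas the developable $S_{\boldsymbol{\nu}_i}$ extends to a full neighbourhood of each $(t,a)$, which is what legitimises the phrase ``the germ of $S_{\boldsymbol{\nu}_i}$ at $(t,a)$ is a front for any $a$''. Everything else is a direct citation of Theorem \ref{thm:singsn} and Lemma \ref{lem:s2s3frl}, the only mildly tedious ingredient being the reproduction of the derivatives $\eta_{\boldsymbol{\nu}_i}\lambda_{\boldsymbol{\nu}_i}$ and $\eta_{\boldsymbol{\nu}_i}\eta_{\boldsymbol{\nu}_i}\lambda_{\boldsymbol{\nu}_i}$ with the index $i$ carried through.
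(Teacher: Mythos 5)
Your reduction is correct and is essentially the paper's own approach: the paper states this theorem without a separate proof, treating it as Theorem \ref{thm:singsn} applied verbatim to the frame $\{\boldsymbol{e},\boldsymbol{\nu}_i,\boldsymbol{b}_i\}$ along $\hat\gamma$, with $(\kappa_{i1},\kappa_{i2},\kappa_{i3})$ substituted for $(\kappa_1,\kappa_2,\kappa_3)$, exactly as you do via Lemma \ref{lem:s2s3frl} and the $i$-subscripted $\beta_{\boldsymbol{\nu}_i}$, $\rho_{\boldsymbol{\nu}_i}$. Your additional remark that $S_{\boldsymbol{\nu}_i}$ is defined on a full neighbourhood of $(t,a)$ even though $f_i$ itself lives only on the half-plane $U_i$ is a sensible clarification that the paper leaves implicit.
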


Through \eqref{eq:k123theta}, we use the invariant $(\kappa_{11}, \kappa_{12}, \kappa_{13})$ of $S_{\boldsymbol{\nu}_1}$ to express the conditions of $S_{\boldsymbol{\nu}_2}$ in cases of singularities, and obtain the following result.

\begin{corollary}\label{coro:v2}
	
	If $l(t)=0$, the following hold.\\
	The germ $S_{\boldsymbol{\nu}_2}$ at $(t,0)$ 
	is a cuspidal edge if and only if
	$$
	l'(\kappa_{11}\cos\theta + \kappa_{12} \sin\theta)\ne0.
	$$
	The germ $S_{\boldsymbol{\nu}_2}$ at $(t,0)$ 
	is a swallowtail if and only if
	$$l'=0,\ l''(\kappa_{11}\cos\theta+\kappa_{12}\sin\theta)\ne0\ \text{or}
	$$
	$$\kappa_{11}\cos\theta+\kappa_{12}\sin\theta=0
	,\ l'\big((\kappa_{12}\cos\theta-\kappa_{11}\sin\theta)(\kappa_{13}+4\theta')+3(\kappa_{11}'\cos\theta+\kappa_{12}'\sin\theta)\big)\ne0.
	$$
\end{corollary}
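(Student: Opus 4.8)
The plan is to obtain Corollary \ref{coro:v2} as a direct transcription, into the data $(\kappa_{11},\kappa_{12},\kappa_{13},\theta)$, of the $l(t)=0$ form of Theorem \ref{thm:singsn}. Indeed $S_{\boldsymbol{\nu}_2}$ is precisely the surface $S_{\boldsymbol{\nu}}$ of Section \ref{sec:dev} built from the frame $\{\boldsymbol{e},\boldsymbol{\nu}_2,\boldsymbol{b}_2\}$, so the invariants $(\kappa_1,\kappa_2,\kappa_3)$ in Theorem \ref{thm:singsn} and its corollary become $(\kappa_{21},\kappa_{22},\kappa_{23})$. Carrying over the standing hypotheses $(\kappa_{21},\kappa_{23})\neq(0,0)$ and $\beta_{\boldsymbol{\nu}_2}\neq0$, and noting that $l(t)=0$ forces $s(t)=0$ by \eqref{eq:stnu}, the corollary following Theorem \ref{thm:singsn} gives: $S_{\boldsymbol{\nu}_2}$ is a cuspidal edge at $(t,0)$ exactly when $l'\kappa_{21}\neq0$; and it is a swallowtail at $(t,0)$ exactly when either $l'=0$ and $\kappa_{21}l''\neq0$, or $\kappa_{21}=0$ and $l'(\kappa_{22}\kappa_{23}+3\kappa_{21}')\neq0$.

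The remaining work is to substitute the frame-change relations \eqref{eq:k123theta}. The cuspidal-edge condition $l'\kappa_{21}\neq0$ becomes $l'(\kappa_{11}\cos\theta+\kappa_{12}\sin\theta)\neq0$, and the first swallowtail branch becomes $l'=0$, $l''(\kappa_{11}\cos\theta+\kappa_{12}\sin\theta)\neq0$, with no computation needed. For the second branch one differentiates $\kappa_{21}=\kappa_{11}\cos\theta+\kappa_{12}\sin\theta$ to get
\[
\kappa_{21}'=(\kappa_{11}'\cos\theta+\kappa_{12}'\sin\theta)+\theta'(\kappa_{12}\cos\theta-\kappa_{11}\sin\theta)=(\kappa_{11}'\cos\theta+\kappa_{12}'\sin\theta)+\theta'\kappa_{22},
\]
and then, using $\kappa_{23}=\kappa_{13}+\theta'$,
\[
\kappa_{22}\kappa_{23}+3\kappa_{21}'=(\kappa_{12}\cos\theta-\kappa_{11}\sin\theta)(\kappa_{13}+4\theta')+3(\kappa_{11}'\cos\theta+\kappa_{12}'\sin\theta).
\]
Together with the obvious equivalence of $\kappa_{21}=0$ and $\kappa_{11}\cos\theta+\kappa_{12}\sin\theta=0$, this is exactly the stated second swallowtail condition, and the proof is complete.

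There is no essential difficulty here: the whole content is a change of moving frame, and the only care required is the single differentiation above (remembering $\theta=\theta(t)$, so $\theta'$ and $\theta''$ appear) and the trigonometric bookkeeping. The one conceptual point worth flagging in the write-up is that the degenerate case ``$\kappa_{21}=0$'' is not the case ``$\kappa_{11}=0$'': rotating the normal by $\theta$ changes which combination of the invariants of $f_1$ plays the role of the normal curvature of the rotated frame, and this is precisely the mechanism by which the gluing angle $\theta$ enters the singularity classification of $S_{\boldsymbol{\nu}_2}$.
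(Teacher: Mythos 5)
Your proposal is correct and follows exactly the route the paper intends: specialize the $l(t)=0$ corollary of Theorem \ref{thm:singsn} to the frame $\{\boldsymbol{e},\boldsymbol{\nu}_2,\boldsymbol{b}_2\}$ and substitute the rotation relations \eqref{eq:k123theta}, with your differentiation of $\kappa_{21}$ correctly producing the $(\kappa_{13}+4\theta')$ term. Nothing is missing.
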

\section{Examples of $S_{\boldsymbol{\nu}_i}$ in special case}
In this section, we give several examples
which appeared in this paper.

\begin{example}
	We give an example of
	$S_{\boldsymbol{\nu}_2}$-cylindrical glue 
	and it is obtained by rotating the unit normal vector of the wave surface. 
	Let us set
	$\hat{\gamma}(u,0)=(\cos u,\sin u,u)$ and let us set $f_1$ and $f_2$ by
	$$
	f_1(u,v)=\hat{\gamma}(u)+v\big(0,1,0\big),\
	f_2(u,v)=\hat{\gamma}(u)+v\big(0,0,1\big)
	$$
	where the gluing locus is $\hat{\gamma}$.
	They are shown in Figure \ref{fig:ex1}. Let us set
	$$
	\boldsymbol{e}=\frac{1}{\sqrt{2}}\big(-\sin u,\cos u,1\big),
	$$
	$$
	\boldsymbol{\nu}_1=-\frac{1}{\sqrt{1+\sin^2u}}\big(1,0,\sin u\big),
	$$
	$$
	\boldsymbol{\nu}_2=\big(\cos u,\sin u,0\big).
	$$
	Then $\hat{\gamma}'=\sqrt{2}\boldsymbol{e}$,
	where the length function of $\hat{\gamma}$ is $l(u)=\sqrt{2}$.
	The vectors $\boldsymbol{\nu}_1$, $\boldsymbol{\nu}_2$ are the unit normal vectors of $f_1$ and $f_2$ respectively.
	We set $\boldsymbol{b}_1=
	\boldsymbol{e}\times\boldsymbol{\nu}_1$ and $\boldsymbol{b}_2=
	\boldsymbol{e}\times\boldsymbol{\nu}_2$.
	Then by the Frenet-Serre type formula, $(\kappa_{11},\kappa_{12},\kappa_{13})$ and $(\kappa_{21},\kappa_{22},\kappa_{23})$ are
	$$
	\kappa_{11}=\frac{\cos u}{\sqrt{3-\cos2u}},\
	\kappa_{12}=\frac{\sin u}{\sqrt{1+\sin^2u}},\
	\kappa_{13}=\frac{\sqrt{2}\cos^2u}{\cos2u-3};
	$$
	$$
	\kappa_{21}=-\frac{1}{\sqrt{2}},\ 
	\kappa_{22}=0,\ 
	\kappa_{23}=\frac{1}{\sqrt{2}}.
	$$
	Set 
	$$
	S_{\boldsymbol{\nu}_2}(u,v)=
	\hat{\gamma}+v\Big(\dfrac{\kappa_{23}\boldsymbol{e}+\kappa_{21}\boldsymbol{b}_2}{\sqrt{\kappa_{23}^2+\kappa_{21}^2}}\Big).
	$$
	Then $S_{\boldsymbol{\nu}_2}$ and $f_2$ have the same image and unit normal vector.
	Moreover, they are glued along $\hat{\gamma}$ and $f_1$, as shown in the Figure \ref{fig:ex1}.
	Let the function $\theta(u)$ be the angle between $\boldsymbol{\nu}_1$ and $\boldsymbol{\nu}_2$.
	We set
	$$
	\begin{cases}
		\sin\theta=-\frac{\sqrt{1-\cos2u}}{\sqrt{1+\sin^2u}},\ 
		\cos\theta=-\frac{\cos u}{\sqrt{1+\sin^2u}},\ 
		\theta'=-\frac{2\sqrt{2}}{\cos2u-3}\ 
		\text{when}\ \sin u\geqslant0;\\
		\sin\theta=\frac{\sqrt{1-\cos2u}}{\sqrt{1+\sin^2u}},\
		\cos\theta=-\frac{\cos u}{\sqrt{1+\sin^2u}},\ 
		\theta'=\frac{2\sqrt{2}}{\cos2u-3}\ 
		\text{when}\ \sin u<0.
	\end{cases}
	$$
	Then by Lemma \ref{lm:k2-k1}, we know that by rotating the unit normal vector of $S_{\boldsymbol{\nu}_1}$ around $\boldsymbol{e}$, we get $S_{\boldsymbol{\nu}_2}$.
	Then calculating $\beta_{\boldsymbol{\nu}_2}$,
	we obtain $\beta_{\boldsymbol{\nu}_2}=0$.
	This shows that $S_{\boldsymbol{\nu}_2}$ is cylinder
	and $(f_1,f_2,\hat{\gamma})$ is $S_{\boldsymbol{\nu}_2}$-cylindrical glue.
\end{example}
\begin{figure}[htbp]
	\centering
	\includegraphics[width=8cm]{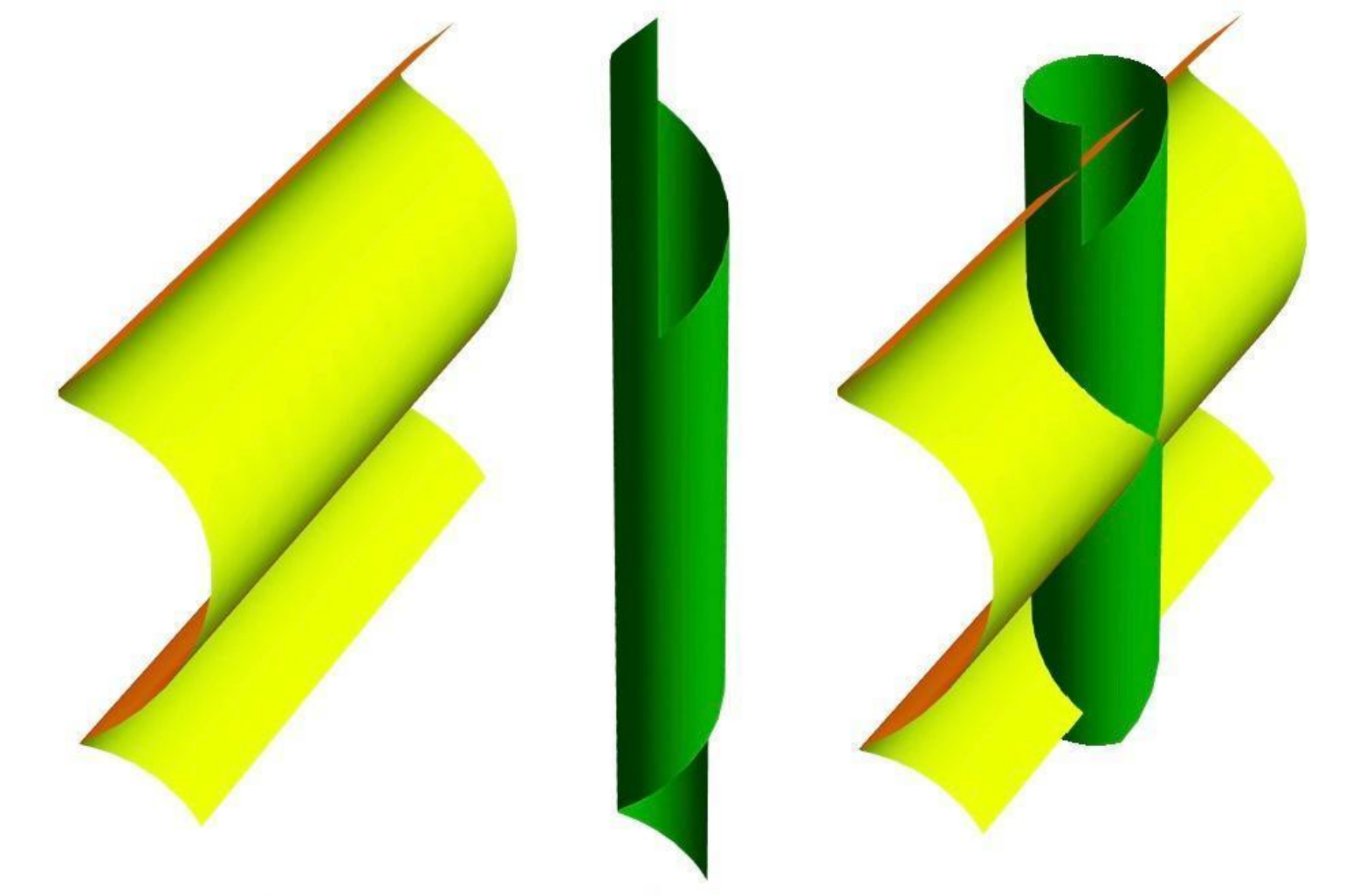}
	\caption{
		From left to right,
		image of $(f_1, S_{\boldsymbol{\nu}_1})$,
		image of $(f_2, S_{\boldsymbol{\nu}_2})$, image of Glued ($S_{\boldsymbol{\nu}_1}, S_{\boldsymbol{\nu}_2}$).}
	\label{fig:ex1}
\end{figure}

\begin{example}
	We give an example of
	$S_{\boldsymbol{\nu}_1}$-cylindrical glue 
	and also $S_{\boldsymbol{\nu}_2}$-conical glue. 
	Let us set 
	$\hat{\gamma}(u,0)=(\cos u,\sin u,1)$,
	and let us set $f_1$ and $f_2$ by
	$$
	f_1(u,v)=\big(\cos u,\sin u,v\big),\
	f_2(u,v)=\big(v\cos u,v\sin u,v\big)
	$$
	where the gluing locus is $\hat{\gamma}$.
	They are shown in the Figure \ref{fig:ex5}.
	Let us set
	$$
	\boldsymbol{e}=\big(-\sin u,\cos u,0\big),
	$$
	$$
	\boldsymbol{\nu}_1=\big(\cos u,\sin u,1\big),
	$$
	$$
	\boldsymbol{\nu}_2=\frac{1}{\sqrt{2}}\big(\cos u.\sin u,-1\big).
	$$
	Then $\hat{\gamma}'=\boldsymbol{e}$,
	where the length function of $\hat{\gamma}$ is $l(u)=1$.
	The vectors $\boldsymbol{\nu}_1$, $\boldsymbol{\nu}_2$ are the unit normal vectors of $f_1$ and $f_2$ respectively.
	Moreover, the angle between $\boldsymbol{\nu}_1$
	and $\boldsymbol{\nu}_2$ is $\frac{\pi}{4}$.
	We set $\boldsymbol{b}_1=
	\boldsymbol{e}\times\boldsymbol{\nu}_1$ and $\boldsymbol{b}_2=
	\boldsymbol{e}\times\boldsymbol{\nu}_2$.
	Then by the Frenet-Serre type formula, $(\kappa_{11},\kappa_{12},\kappa_{13})$ and $(\kappa_{21},\kappa_{22},\kappa_{23})$ are
	$$
	\kappa_{11}=-1,\
	\kappa_{12}=0,\
	\kappa_{13}=0;
	$$
	$$
	\kappa_{21}=-\frac{1}{\sqrt{2}},\ 
	\kappa_{22}=\frac{1}{\sqrt{2}},\ 
	\kappa_{23}=0.
	$$
	Set 
	$$
	S_{\boldsymbol{\nu}_1}(u,v)=
	\hat{\gamma}+v\Big(\dfrac{\kappa_{13}\boldsymbol{e}+\kappa_{11}\boldsymbol{b}_1}{\sqrt{\kappa_{13}^2+\kappa_{11}^2}}\Big),\
	S_{\boldsymbol{\nu}_2}(u,v)=
	\hat{\gamma}+v\Big(\dfrac{\kappa_{23}\boldsymbol{e}+\kappa_{21}\boldsymbol{b}_2}{\sqrt{\kappa_{23}^2+\kappa_{21}^2}}\Big).
	$$
	The images of $S_{\boldsymbol{\nu}_1}$ and $S_{\boldsymbol{\nu}_2}$ are the same as those of $f_1$ and $f_2$ respectively, and their unit normal vectors are also the same.
	Moreover, they are glued along $\hat{\gamma}$ as shown in the Figure \ref{fig:ex5}.
	For $S_{\boldsymbol{\nu}_1}$, we can calculate that
	$$
	\beta_{\boldsymbol{\nu}_1}=0.
	$$
	For $S_{\boldsymbol{\nu}_2}$, we can calculate that
	$$
	\beta_{\boldsymbol{\nu}_2}=\frac{1}{2\sqrt{2}},\
	\rho_{\boldsymbol{\nu}_2}=0.
	$$
	This shows that $(f_1,f_2,\hat{\gamma})$ is not only $S_{\boldsymbol{\nu}_1}$-cylindrical glue
	but also $S_{\boldsymbol{\nu}_2}$-conical glue.
\end{example}
\begin{figure}[htbp]
	\centering
	\includegraphics[width=15cm]{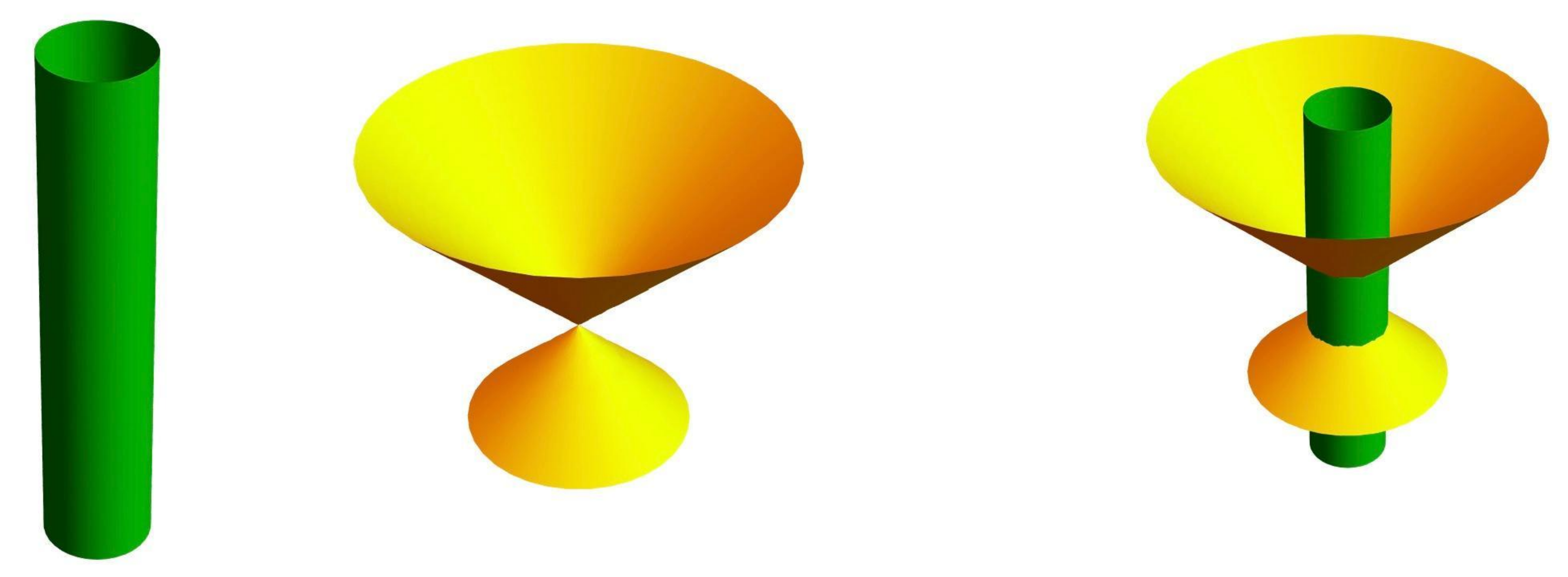}
	\caption{
		From left to right,
		image of $(f_1, S_{\boldsymbol{\nu}_1})$,
		image of $(f_2, S_{\boldsymbol{\nu}_2})$, 
		image of Glued ($S_{\boldsymbol{\nu}_1}, S_{\boldsymbol{\nu}_2}$).}
	\label{fig:ex5}
\end{figure}

\begin{example}
	We give an example of
	$S_{\boldsymbol{\nu}_1}$-cylindrical glue.
	Let us set 
	$\hat{\gamma}(u,0)=(\cos u,\sin u,0)$
	and let us set $f_1$ and $f_2$ by
	$$
	f_1(u,v)=\bigg(\sin(v+\frac{\pi}{2})\cos u,\sin(v+\frac{\pi}{2})\sin u,\cos(v+\frac{\pi}{2})\bigg),
	$$
	$$
	f_2(u,v)=\big(\cos u,v+\sin u,0\big)
	$$
	where the gluing locus is $\hat{\gamma}$. 
	They are shown in the Figure \ref{fig:ex6}.
	Let us set
	$$
	\boldsymbol{e}=\big(-\sin u,\cos u,0\big),
	$$
	$$
	\boldsymbol{\nu}_1=\big(-\cos u,-\sin u,0\big),
	$$
	$$
	\boldsymbol{\nu}_2=\big(0,0,-1\big).
	$$
	Then $\boldsymbol{e}=\hat{\gamma}'$,
	where the length function of $\hat{\gamma}$ is $l(u)=1$.
	The vectors $\boldsymbol{\nu}_1$, $\boldsymbol{\nu}_2$ are the unit normal vectors of $f_1$ and $f_2$ on the gluing locus $\hat{\gamma}$ respectively.
	Moreover, the angle between $\boldsymbol{\nu}_1$
	and $\boldsymbol{\nu}_2$ is $\frac{\pi}{2}$.
	We set $\boldsymbol{b}_1=
	\boldsymbol{e}\times\boldsymbol{\nu}_1$ and $\boldsymbol{b}_2=
	\boldsymbol{e}\times\boldsymbol{\nu}_2$.
	Then by the Frenet-Serre type formula, $(\kappa_{11},\kappa_{12},\kappa_{13})$ and $(\kappa_{21},\kappa_{22},\kappa_{23})$ are
	$$
	\kappa_{11}=1,\
	\kappa_{12}=0,\
	\kappa_{13}=0;
	$$
	$$
	\kappa_{21}=0,\ 
	\kappa_{22}=1,\ 
	\kappa_{23}=0.
	$$
	It can be seen that $(\kappa_{11},\kappa_{12},\kappa_{13})$ and $(\kappa_{21},\kappa_{22},\kappa_{23})$ satisfy \eqref{eq:k123theta} in Lemma \ref{lm:k2-k1}.
	And we set 
	$$
	S_{\boldsymbol{\nu}_1}(u,v)=
	\hat{\gamma}+v\Big(\dfrac{\kappa_{13}\boldsymbol{e}+\kappa_{11}\boldsymbol{b}_1}{\sqrt{\kappa_{13}^2+\kappa_{11}^2}}\Big)
	=\big(\cos u,\sin u, -v\big).
	$$
	Then for $S_{\boldsymbol{\nu}_1}$, we can calculate that
	$$
	\beta_{\boldsymbol{\nu}_1}=0.
	$$
	According to Theorem \ref{thm:4.5}, $S_{\boldsymbol{\nu}_1}$ is a cylinder, 
	as shown in Figure \ref{fig:ex6}.
	This shows that $(f_1,f_2,\hat{\gamma})$ is $S_{\boldsymbol{\nu}_1}$-cylindrical glue.
\end{example}
\begin{figure}[htbp]
	\centering
	\includegraphics[width=15cm]{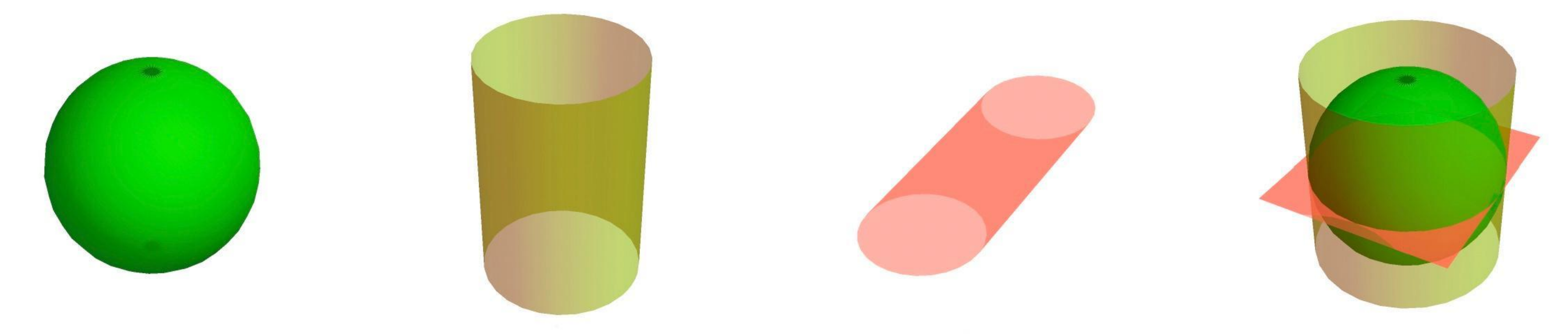}
	\caption{
		From left to right,
		image of $f_1$,
		image of $S_{\boldsymbol{\nu}_1}$,
		image of $f_2$,
		image of Glued ($f_1, S_{\boldsymbol{\nu}_1}, f_2$) along $\hat{\gamma}$.}
	\label{fig:ex6}
\end{figure}

\begin{example}
	We give an example of
	$S_{\boldsymbol{\nu}_2}$-conical glue.
	Let us set
	$\hat{\gamma}(u,0)=\frac{\sqrt{2}}{2}(\cos u,\sin u,1)$
	and let us set $f_1$ and $f_2$ by
	$$
	f_1(u,v)=\bigg(\frac{\sqrt{2}}{2}\cos u,a+\frac{\sqrt{2}}{2}\sin u,\frac{\sqrt{2}}{2}\bigg),
	$$
	$$
	f_2(u,v)=\bigg(\sin(v+\frac{\pi}{4})\cos u,\sin(v+\frac{\pi}{4})\sin u,-\cos(v+\frac{\pi}{4})\bigg)
	$$
	where the gluing locus is $\hat{\gamma}$.
	They are shown in Figure \ref{fig:ex7}.
	Let us set
	$$
	\boldsymbol{e}=\big(-\sin u,\cos u,0\big),
	$$
	$$
	\boldsymbol{\nu}_1=\big(0,0,1\big),
	$$
	$$
	\boldsymbol{\nu}_2=\frac{1}{\sqrt{2}}\big(\cos u,\sin u,-1\big).
	$$
	Then $\hat{\gamma}'=\frac{\sqrt{2}}{2}\boldsymbol{e}$,
	where the length function of $\hat{\gamma}$ is $l(u)=\sqrt{2}/2$.
	The vectors $\boldsymbol{\nu}_1$, $\boldsymbol{\nu}_2$ are the unit normal vectors of $f_1$ and $f_2$ on the gluing locus $\hat{\gamma}$ respectively.
	Moreover, the angle between $\boldsymbol{\nu}_1$
	and $\boldsymbol{\nu}_2$ is $3\pi/2$.
	We set $\boldsymbol{b}_1=
	\boldsymbol{e}\times\boldsymbol{\nu}_1$ and $\boldsymbol{b}_2=
	\boldsymbol{e}\times\boldsymbol{\nu}_2$.
	Then by the Frenet-Serre type formula, $(\kappa_{11},\kappa_{12},\kappa_{13})$ and $(\kappa_{21},\kappa_{22},\kappa_{23})$ are
	$$
	\kappa_{11}=0,\
	\kappa_{12}=-1,\
	\kappa_{13}=0;
	$$
	$$
	\kappa_{21}=-\frac{1}{\sqrt{2}},\ 
	\kappa_{22}=\frac{1}{\sqrt{2}},\ 
	\kappa_{23}=0.
	$$
	It can be seen that $(\kappa_{11},\kappa_{12},\kappa_{13})$ and $(\kappa_{21},\kappa_{22},\kappa_{23})$ satisfy \eqref{eq:k123theta} in Lemma \ref{lm:k2-k1}.
	And we set 
	$$
	S_{\boldsymbol{\nu}_2}(u,v)=
	\hat{\gamma}+v\Big(\dfrac{\kappa_{23}\boldsymbol{e}+\kappa_{21}\boldsymbol{b}_2}{\sqrt{\kappa_{23}^2+\kappa_{21}^2}}\Big)
	=\Big((v+\frac{\sqrt{2}}{2})\cos u,(v+\frac{\sqrt{2}}{2})\sin u,v+\frac{\sqrt{2}}{2}\Big).
	$$
	Then for $S_{\boldsymbol{\nu}_2}$, we can calculate that
	$$
	\beta_{\boldsymbol{\nu}_2}=\frac{1}{2\sqrt{2}},\
	\rho_{\boldsymbol{\nu}_2}=0.
	$$
	According to Theorem \ref{thm:4.5}, $S_{\boldsymbol{\nu}_2}$ is a cone, 
	as shown in Figure \ref{fig:ex7}.
	This shows that $(f_1,f_2,\hat{\gamma})$ is $S_{\boldsymbol{\nu}_2}$-conical glue.
\end{example}
\begin{figure}[htbp]
	\centering
	\includegraphics[width=15cm]{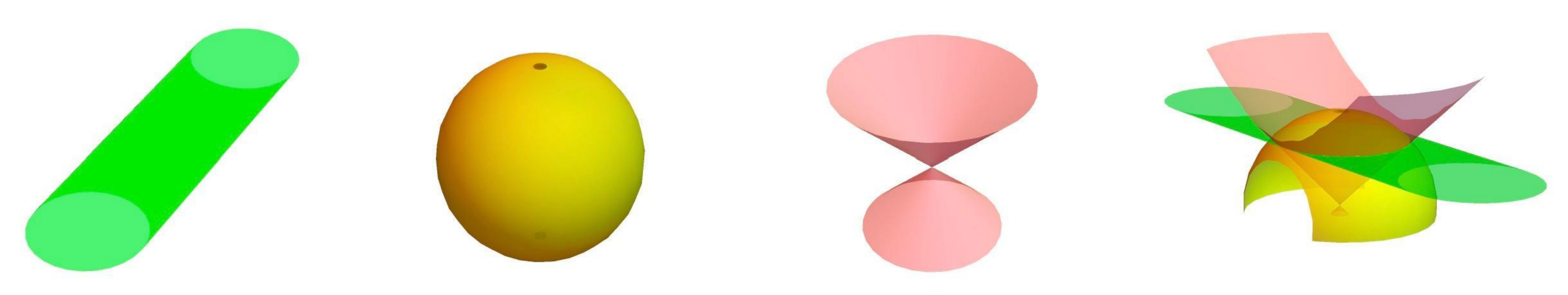}
	\caption{
		From left to right,
		image of $f_1$,
		image of $S_{\boldsymbol{\nu}_1}$,
		image of $f_2$,
		image of Glued ($f_1, S_{\boldsymbol{\nu}_1}, f_2$) along $\hat{\gamma}$.}
	\label{fig:ex7}
\end{figure}

\begin{example}
	We give an example of
	$S_{\boldsymbol{\nu}_2}$-cuspidal edgy glue 
	and it is obtained by rotating the unit normal vector of the plane. 
	Let us set
	$\hat{\gamma}(u,0)=(u^2,u^3,0)$
	and let us set $f_1$ and $f_2$ by
	$$
	f_1(u,v)=\big(u^2,u^3+v,0\big),\
	f_2(u,v)=\big(u^2+v,u^3+\frac{3}{2}uv+v,v\big)
	$$
	where the gluing locus is $\hat{\gamma}$.
	They are show in Figure \ref{fig:ex3}.
	Let us set
	$$
	\boldsymbol{e}=\frac{1}{\sqrt{4+9u^2}}\big(2,3u,0\big),
	$$
	$$
	\boldsymbol{\nu}_1=\big(0,0,1\big),
	$$
	$$
	\boldsymbol{\nu}_2=\frac{1}{\sqrt{8+9u^2}}\big(3u,-2,2\big).
	$$
	Then $\hat{\gamma}'=l(u)\boldsymbol{e}$,
	where the length function of $\hat{\gamma}$ is $l(u)=u\sqrt{4+9u^2}$.
	The vectors $\boldsymbol{\nu}_1$, $\boldsymbol{\nu}_2$ are the unit normal vectors of $f_1$ and $f_2$ on the gluing locus $\hat{\gamma}$ respectively.
	We set $\boldsymbol{b}_1=
	\boldsymbol{e}\times\boldsymbol{\nu}_1$ and $\boldsymbol{b}_2=
	\boldsymbol{e}\times\boldsymbol{\nu}_2$.
	Then by the Frenet-Serre type formula, $(\kappa_{11},\kappa_{12},\kappa_{13})$ and $(\kappa_{21},\kappa_{22},\kappa_{23})$ are
	$$
	\kappa_{11}=0,\
	\kappa_{12}=-\frac{6}{4+9u^2},\
	\kappa_{13}=0;
	$$
	$$
	\kappa_{21}=-\frac{6}{\sqrt{(4+9u^2)(8+9u^2)}},\ 
	\kappa_{22}=-\frac{12}{(4+9u^2)\sqrt{8+9u^2}},\ 
	\kappa_{23}=\frac{18u}{\sqrt{4+9u^2}(8+9u^2)}.
	$$
	Let the function $\theta(u)$ be the angle between $\boldsymbol{\nu}_1$
	and $\boldsymbol{\nu}_2$. 
	We set
	$$
	\sin\theta
	=\sqrt{\frac{4+9u^2}{8+9u^2}},\
	\cos\theta
	=\frac{2}{\sqrt{8+9u^2}},\
	\theta'=\frac{18u}{\sqrt{4+9u^2}(8+9u^2)}
	$$
	It can be seen that $(\kappa_{11},\kappa_{12},\kappa_{13})$ and $(\kappa_{21},\kappa_{22},\kappa_{23})$ satisfy \eqref{eq:k123theta} in Lemma \ref{lm:k2-k1}.
	We set
	$$
	S_{\boldsymbol{\nu}_2}(u,v)=
	\hat{\gamma}+v\Big(\dfrac{\kappa_{23}\boldsymbol{e}+\kappa_{21}\boldsymbol{b}_2}{\sqrt{\kappa_{23}^2+\kappa_{21}^2}}\Big)
	=\big(u^2,u^3+v,v\big).
	$$
	Then $S_{\boldsymbol{\nu}_2}$ and $f_2$ have the same image
	and unit normal vector.
	Moreover, they are glued along $\hat{\gamma}$ and $f_1$.
	It is shown in Figure \ref{fig:ex3}.
	For the tangent developable surface $S_{\boldsymbol{\nu}_2}$,
	when $u=0$, $l(u)=0$,
	then there is a singularity at $(0,0)$.
	And we can calculate
	$$
	l'(u)=\frac{4+18u^2}{\sqrt{4+9u^2}}.
	$$
	Then we obtain 
	$(\kappa_{11}\cos\theta+\kappa_{12}\sin\theta)l'|_{u=0}\neq0$.
	By Corollary \ref{coro:v2}, this show that
	$S_{\boldsymbol{\nu}_2}$ is cuspidal edge at $(0,0)$.
	In conclusion, $(f_1,f_2,\hat{\gamma})$ is $S_{\boldsymbol{\nu}_2}$-cuspidal edgy glue.
\end{example}
\begin{figure}[htbp]
	\centering
	\includegraphics[width=15cm]{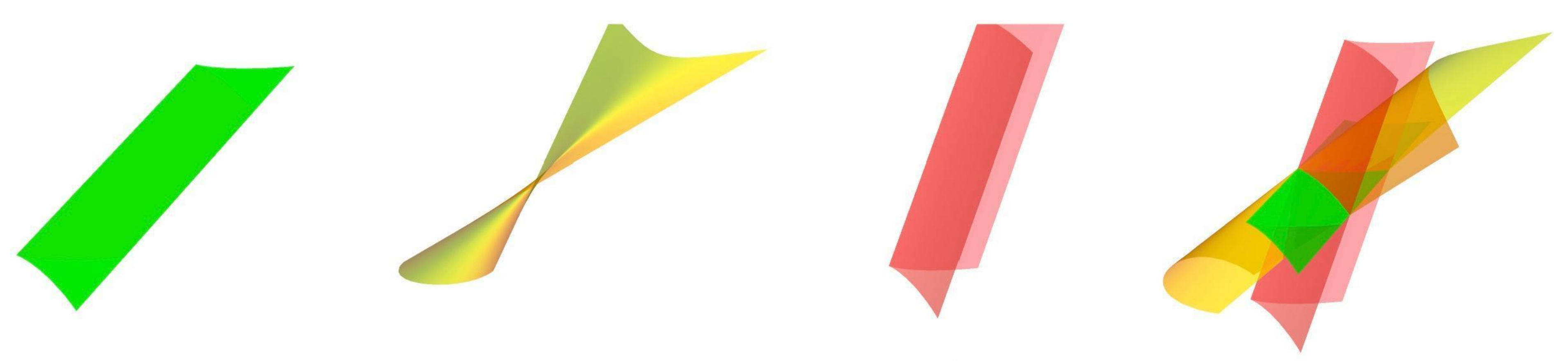}
	\caption{
		From left to right,
		image of $f_1$,
		image of $S_{\boldsymbol{\nu}_1}$,
		image of $f_2$,
		image of Glued ($f_1, S_{\boldsymbol{\nu}_1}, f_2$) along $\hat{\gamma}$.}
	\label{fig:ex3}
\end{figure}

\begin{example}
	We give an example of
	$S_{\boldsymbol{\nu}_2}$-swallowtailed glue 
	and it is obtained by rotating the unit normal vector of the plane. 
	Let us set
	$\hat{\gamma}(u,0)=(0, 4u^3, 3u^4)$
	and let us set $f_1$ and $f_2$ by
	$$
	f_1(u,v)=\big(0, 4u^3+v, 3u^4\big),\
	f_2(u,v)=\big(v, 4u^3+2uv+v^2, 3u^4+u^2v-v^2\big).
	$$
	where the gluing locus is $\hat{\gamma}$.
	They are show in Figure \ref{fig:ex4}.
	Let us set
	$$
	\boldsymbol{e}=\frac{1}{\sqrt{1+u^2}}\big(0,1,u\big),
	$$
	$$
	\boldsymbol{\nu}_1=\big(0,0,1\big),
	$$
	$$
	\boldsymbol{\nu}_2=\frac{1}{\sqrt{1+u^2+u^4}}\big(-u^2,u,-1\big).
	$$
	Then $\hat{\gamma}'=l(u)\boldsymbol{e}$,
	where the length function of $\hat{\gamma}$ is $l(u)=12u^2\sqrt{1+u^2}$.
	The vectors $\boldsymbol{\nu}_1$, $\boldsymbol{\nu}_2$ are the unit normal vectors of $f_1$ and $f_2$ on the gluing locus $\hat{\gamma}$ respectively.
	We set $\boldsymbol{b}_1=
	\boldsymbol{e}\times\boldsymbol{\nu}_1$ and $\boldsymbol{b}_2=
	\boldsymbol{e}\times\boldsymbol{\nu}_2$.
	Then by the Frenet-Serre type formula, $(\kappa_{11},\kappa_{12},\kappa_{13})$ and $(\kappa_{21},\kappa_{22},\kappa_{23})$ are
	$$
	\kappa_{11}=\frac{1}{(1+u^2)^\frac{3}{2}},\
	\kappa_{12}=0,\
	\kappa_{13}=0;
	$$
	$$
	\kappa_{21}=-\frac{1}{\sqrt{(1+u^2)(1+u^2+u^4)}},\ 
	\kappa_{22}=\frac{u^2}{(1+u^2)\sqrt{1+u^2+u^4}},\ 
	\kappa_{23}=\frac{u(2+u^2)}{\sqrt{1+u^2}(1+u^2+u^4)}.
	$$
	Let the function $\theta(u)$ be the angle between $\boldsymbol{\nu}_1$
	and $\boldsymbol{\nu}_2$.
	We set
	$$
	\sin\theta
	=\sqrt{\frac{1+u^2}{1+u^2+u^4}},\
	\cos\theta
	=\frac{u^2}{\sqrt{1+u^2+u^4}},\
	\theta'=\frac{u(2+u^2)}{\sqrt{1+u^2}(1+u^2+u^4)}.
	$$
	It can be seen that $(\kappa_{11},\kappa_{12},\kappa_{13})$ and $(\kappa_{21},\kappa_{22},\kappa_{23})$ satisfy \eqref{eq:k123theta} in Lemma \ref{lm:k2-k1}.
	We set
	$$
	S_{\boldsymbol{\nu}_2}(u,v)=
	\hat{\gamma}+v\Big(\dfrac{\kappa_{23}\boldsymbol{e}+\kappa_{21}\boldsymbol{b}_2}{\sqrt{\kappa_{23}^2+\kappa_{21}^2}}\Big)
	=\big(v, 4u^3+2uv, 3u^4+u^2v\big).
	$$
	Then $S_{\boldsymbol{\nu}_2}$ and $f_2$ have the same image
	and unit normal vector.
	Moreover, they are glued along $\hat{\gamma}$ and $f_1$.
	It is shown in Figure \ref{fig:ex4}.
	For the tangent developable surface $S_{\boldsymbol{\nu}_2}$,
	when $u=0$, $l(u)=0$,
	then there is a singularity at $(0,0)$.
	And we can calculate
	$$
	l'(u)=\frac{12u(2+3u^2)}{\sqrt{1+u^2}},\
	l''(u)=\frac{12(2+9u^2+6u^4)}{(1+u^2)^\frac{3}{2}}.
	$$
	Then we obtain $l'|_{u=0}=0$ and  $(\kappa_{11}\cos\theta+\kappa_{12}\sin\theta)l''|_{u=0}=-24$.
	By Corollary \ref{coro:v2}, this show that
	$S_{\boldsymbol{\nu}_2}$ is swallowtailed at $(0,0)$.
	In conclusion, $(f_1,f_2,\hat{\gamma})$ is $S_{\boldsymbol{\nu}_2}$-swallowtailed glue.
\end{example}
\begin{figure}[htbp]
	\centering
	\includegraphics[width=15cm]{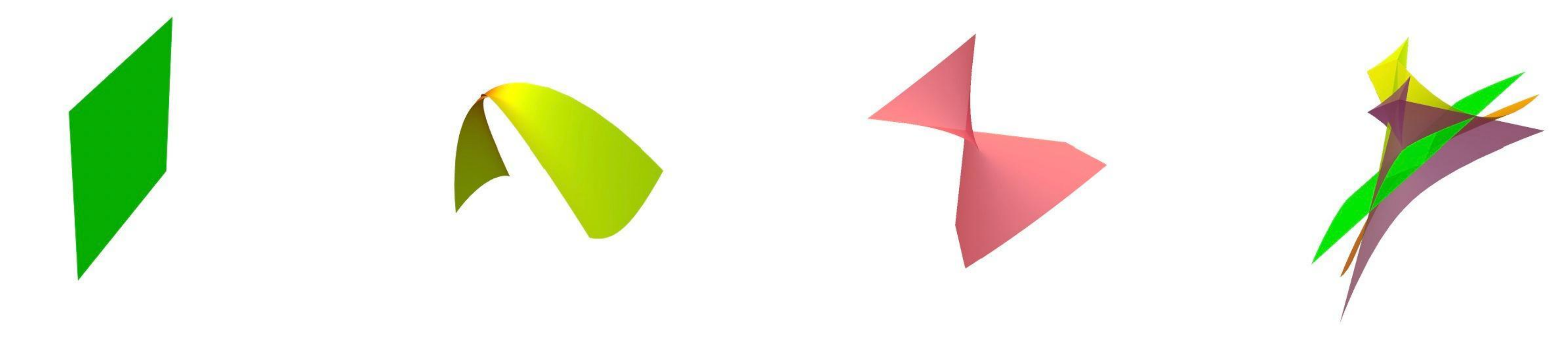}
	\caption{
		From left to right,
		image of $f_1$,
		image of $S_{\boldsymbol{\nu}_1}$,
		image of $f_2$,
		image of Glued ($f_1, S_{\boldsymbol{\nu}_1}, f_2$) along $\hat{\gamma}$.}
	\label{fig:ex4}
\end{figure}

\end{document}